\newtheorem{theorem}{Theorem}
\newtheorem{convention}{Convention}
\newtheorem{property}[theorem]{Property}
\newtheorem{lemma}[theorem]{Lemma}
\theoremstyle{definition} 
\theoremstyle{remark}
\numberwithin{equation}{section}
\numberwithin{theorem}{section}
\numberwithin{example}{section}
\numberwithin{definition}{section}
\numberwithin{figure}{section}
\DeclareMathOperator{\var}{Var}
\DeclareMathOperator{\cS}{\mathcal{S}}
\DeclareMathOperator{\cE}{\mathcal{E}}
\DeclareMathOperator{\cH}{\mathcal{H}}
\DeclareMathOperator{\cX}{\mathcal{X}}
\DeclareMathOperator{\bE}{\mathbb{E}}
\DeclareMathOperator{\bR}{\mathbb{R}}
\newcommand{\barPhi}{\bar{\Phi}}
\newcommand{\bardelta}{\bar{\delta}}
\newcommand{\barD}{\bar{D}}
\newcommand{\E}{\mathbb{E}}
\newcommand{\barNR}{\bar{D}_{1,   x}}
\newcommand{\sigmahatstar}{\hat{\sigma}^*}
\newcommand{\sigmahat}{\hat{\sigma}}
\newcommand{\frakh}{\mathfrak{h}}
\newcommand{\frakc}{\mathfrak{c} } 
\newcommand{\frakD}{\mathfrak{D} } 
\newcommand{\frakcm}{\mathfrak{c}_m } 
\newcommand{\frakbm}{\mathfrak{b}_m } 
\newcommand{\cramer}{Cram$\acute{\text{e}}$r}
\newcommand{\anx}{a_{n, x}}
\newcommand{\DRplaceholder}{\mathfrak{D}_2} 
\newcommand{\barDRplaceholder}{\bar{\mathfrak{D}}_2} 
\newcommand{\secref}[1]{Section~\ref{sec:#1}}
\newcommand{\secsref}[1]{Sections~\ref{sec:#1}}
\newcommand{\secssref}[1]{\ref{sec:#1}}
\newcommand{\appref}[1]{Appendix~\ref{app:#1}}
\newcommand{\lemref}[1]{Lemma~\ref{lem:#1}}
\newcommand{\lemsref}[1]{Lemmas~\ref{lem:#1}}
\newcommand{\lemssref}[1]{\ref{lem:#1}}
\newcommand{\thmref}[1]{Theorem~\ref{thm:#1}}
\newcommand{\propertyref}[1]{Property~\ref{property:#1}}
\newcommand{\propertysref}[1]{Properties~\ref{property:#1}}
\newcommand{\propertyssref}[1]{\ref{property:#1}}
\title[]{Nonuniform Berry-Esseen bounds for Studentized U-statistics}
\author[D.~Leung]{Dennis Leung} 
\address{School of Mathematics and Statistics, University of Melbourne}
\email{dennis.leung@unimelb.edu.au}
\author[Q.~Shao]{Qi-Man Shao} 
\address{Department of Statistics and Data Science, SICM, National Center for Applied Mathematics Shenzhen, Southern University of Science and Technology}
\email{shaoqm@sustech.edu.cn}
\begin{document}

\begin{abstract}

We establish \emph{nonuniform} Berry-Esseen (B-E)  bounds for  Studentized U-statistics of the rate $1/\sqrt{n}$ under a third-moment assumption, which covers the t-statistic that corresponds to a kernel of degree $1$ as a special case. While an interesting data example  raised by  \citet{novak2005self} can show that   the  form of the nonuniform bound for standardized U-statistics  is actually \emph{invalid} for their Studentized counterparts,  our main results suggest that, the validity of such a bound can be restored by minimally augmenting it with an additive correction  term that decays exponentially in $n$. To our best knowledge, this is the first time that valid nonuniform B-E bounds for Studentized U-statistics have appeared in the literature. 
\end{abstract}

\keywords{Exponential lower tail bound of non-negative kernel U-statistics, nonlinear statistics, nonuniform Berry-Esseen bound, Stein's method, Studentization, U-statistics, variable censoring}

\subjclass[2000]{62E17}

\maketitle

%%%%%%%%%%%%%%%%%%%%%%%%%%%%%%%%%%%%%%%%%%%%%%
%%%% Main text entry area:
\section{Introduction} \label{sec:intro}

Let $X_1, \dots, X_n \in \cX$ be independent and identically distributed (i.i.d.) random variables taking values in a measurable space $(\cX, \Sigma_{\cX})$. A  U-statistic  \citep{hoeffding1948} of degree $m \geq 1$ is defined as 
\[
U_n ={n \choose m}^{-1} \sum_{1 \leq i_1 < \cdots < i_m \leq n} h(X_{i_1}, \dots, X_{i_m}),
\]
where $h: \cX^m \rightarrow \bR$ is a symmetric and measurable function in $m$ arguments, also known as a \emph{kernel} function.
 This important construction covers a wide range of statistics, including the sample mean  $n^{-1} \sum_{i=1}^n X_i$ as  the simplest example with $m=1$, for which   
\begin{equation} \label{sample_mean_as_ustat}
 h(x) = x \text{ and } \cX = \mathbb{R}.
\end{equation}
For the theorems stated in this article, we will throughout assume, without loss of generality,   that
\begin{equation} \label{zero_mean_kernel_assumption}
 \bE[h(X_1, \dots, X_m)] = 0,
\end{equation}
though knowing that such re-centering may not be done in practice because the mean of $h(\cdot)$ could be unknown. 
In the U-statistic literature, it is well established  that under the finite second-moment assumption 
 $\bE[h^2(X_1, \dots, X_m)]<\infty$
and the \emph{non-degeneracy condition}
\begin{equation} \label{non_degeneracy}
 \sigma^2 \equiv \var[g(X_1)] >0,
\end{equation}
where $g(\cdot)$ the first-order \emph{canonical function} defined by
\[
g(x)= \bE[h(X_1, X_2, \dots, X_m ) | X_1 = x],
\]
 one has the weak convergence  
\begin{equation} \label{standardized_one_sample_limit}
\frac{\sqrt{n}}{m \sigma} U_n  \longrightarrow_d N(0, 1) \text{ as } n \longrightarrow \infty,
\end{equation}
which extends the classical central limit theorem for the sample mean.

There has always been great interest in characterizing the normal approximation accuracy of \eqref{standardized_one_sample_limit} by Berry-Esseen (B-E) bounds; see \citet{filippova1962mises}, \citet{MR336788}, \citet{bickel1974edgeworth}, \citet{MR464359}, \citet{MR433551}, \citet{MR751580}, \citet{Friedrich},  \citet{chen2007normal} and \citet{bentkus1994lower} for an inexhaustive list of such works. For instance,  \citet{chen2007normal}'s results suggest that, under \eqref{zero_mean_kernel_assumption},  \eqref{non_degeneracy} and $\bE[|h(X_1, \dots, X_m)|^3]< \infty$, when $2m < n$, one has the bounds
\begin{equation}\label{standardized_ustat_unif_BE}
\sup_{x \in \bR}\left|P\left(\frac{\sqrt{n}}{m \sigma} U_n - x \right)  - \Phi(x)\right| \leq   C_1(m)\frac{ \bE[|h(X_1, \dots, X_m) |^3]}{\sqrt{n} \sigma^3}
\end{equation}
and
\begin{equation}\label{standardized_ustat_nonunif_BE}
\left|P\left( \frac{\sqrt{n}}{m \sigma} U_n \leq x\right) - \Phi(x)\right| \leq C_2(m) \frac{ \bE[|h(X_1, \dots, X_m) |^3]}{(1+ |x|)^3\sqrt{n} \sigma^3} \text{ for any }x \in \bR ,
\end{equation}
where $\Phi(x)$ is the standard normal distribution function, and $C_1(m)$ and $C_2(m)$ are  positive constants depending only on $m$\footnote{The moment quantities in \eqref{standardized_ustat_unif_BE} and \eqref{standardized_ustat_nonunif_BE}  have been simplified here for brevity; refer to \citet[Section 3.1]{chen2007normal} for  more sophisticated versions of such bounds.}. In contrast to the \emph{uniform} bound in \eqref{standardized_ustat_unif_BE}, \eqref{standardized_ustat_nonunif_BE} is known as a \emph{nonuniform} B-E bound, which is qualitatively more informative by having a "nonuniform" multiplicative factor that decays in the magnitude of $x$. 
 Without doubt, the sample mean from \eqref{sample_mean_as_ustat} has the richest literature since the works of \citet{berry1941accuracy} and \citet{esseen1942}, where even  the absolute constant's value is very well understood \citep{esseen1956moment, shevtsova2011absolute}.

Nevertheless, with some exceptions such as the rank-based Kendall's tau statistic \citep{kendall1938new}  for testing independence and  Wilcoxon signed rank statistic \citep{wilcoxon1992individual}  for testing medians, whose    respective degree-two kernels have $\sigma = 1/3$ and $\sigma = 1/12$ under a point null conditions like \eqref{zero_mean_kernel_assumption} and other regularity assumptions, $\sigma$ is typically unknown and cannot be directly used to standardize $U_n$.  It is hence more relevant  to develop a B-E bound for U-statistics that are \emph{Studentized}  with the data-driven Jackknife estimator of   $\sigma$ proposed by \citet{arvesen1969jackknifing}; in particular, for the special degree-one kernel in \eqref{sample_mean_as_ustat}, the resulting Studentized U-statistic is precisely the  t-statistic of Gosset \citep{student1908probable}. Other typical examples of U-statistics that must require Studentization are the sample variance and Gini's mean difference; see \citet[Section 1]{lai2011cramer} for the forms of their degree-two kernels. The quest for developing B-E bounds for such Studentized U-statistics
 has not gone unnoticed by researchers: Uniform B-E bounds of rate $1/\sqrt{n}$ analogous to \eqref{standardized_ustat_unif_BE} have been developed for Studentized U-statistics of degree $2$ by
\citet{helmers1985berry}, \citet{callaert1981order}, \citet{zhao1983rate} and \citet{jing2000berry}, respectively under $4.5$, $4+\varepsilon$ for any $\varepsilon > 0$, $4$ and $3$ finite absolute moments imposed on the kernel $h(X_1, X_2)$. Most recently, under $3$ finite absolute moments,   we  have  obtained a uniform B-E bound for Studentized U-statistics of any degree $m$, and also advocated \emph{variable censoring} as the appropriate technical device to prove such bounds under the  Stein-method approach \citep{leungshaounif}. 

To our best knowledge, a  nonuniform bound for Studentized U-statistics that is valid for  all  $x \in \bR$ in the same spirit as \eqref{standardized_ustat_nonunif_BE}  is  still eluding the literature,  even for  the t-statistic and the even simpler  \emph{self-normalized sum}  $S_n / V_n$, where
\begin{equation} \label{Sn_Vn_def}
S_n \equiv \sum_{i=1}^n X_i \text{ and }  V_n^2 \equiv \sum_{i=1}^n X_i^2 \text{ for i.i.d. } X_1, \dots, X_n \in \bR;
\end{equation}
see  \eqref{efron_relationship} below for a classical algebraic relationship between the t-statistic and the self-normalized sum.
%: It should be noted that, although a   delicate \emph{\cramer-type} moderate deviation result  for the \emph{self-normalized sum}  $S_n / V_n$, where
%\begin{equation} \label{Sn_Vn_def}
%S_n \equiv \sum_{i=1}^n X_i \text{ and }  V_n^2 \equiv \sum_{i=1}^n X_i^2 \text{ for i.i.d. } X_1, \dots, X_n \in \bR,
%\end{equation}
%which is closely connected to the  t-statistic (see \eqref{efron_relationship} below), has been established in  \citet{jing2003self}, it does not directly imply a nonuniform B-E bound that applies to all  $x \in \bR$ in the spirit of \eqref{standardized_ustat_nonunif_BE}. 
In fact, an earlier nonuniform B-E bound for the self-normalized sum stated in   \citet[Corollary 2.3]{jing1999exponential} has  been latter disproved by an interesting binary data example  raised by  \citet[p.342-343]{novak2005self}, which also demonstrates it is in fact  \emph{impossible} to have a nonuniform B-E bound of the "usual form", 
\begin{equation}\label{wrong_sn_sum_nonunif_bdd}
\left|P\Bigg(\frac{S_n}{V_n} \leq x\Bigg)  - \Phi(x) \right|\leq  \frac{ C \bE[|X_1|^3]}{\sqrt{n} (\bE[|X_1|^2])^{3/2}} d(|x|) \text{ for all } x \in \bR,
\end{equation}
that holds for an absolute constant $C$ and any non-increasing function $d:\bR_{\geq 0} \rightarrow \bR_{\geq 0}$ with the property  $\lim_{x \rightarrow \infty} d(x) = 0$, assuming $\bE[X_1] = 0$.

This void is now filled by the new nonuniform B-E bound for  \emph{Studentized U-statistics} of any degree $m$ established in this paper. 
As we point out in  \secref{U_stat_review}, \citet{novak2005self}'s example also readily implies that, for a Studentized U-statistic $T_n$, it is similarly impossible to have a bound of the  form:
\begin{equation} \label{usual_form}
|P(T_n \leq x) - \Phi(x)| \leq \frac{C(m) \bE[|h(X_1, \dots, X_m)|^3]}{\sqrt{n} \sigma^3} d(|x|)
\end{equation}
that holds universally for all types of data distributions and kernels, where $C(m)$ is a positive  constant depending only on $m$ and $d$ is any non-increasing function with the same property as the one alluded to in \eqref{wrong_sn_sum_nonunif_bdd}. As such, our new nonuniform B-E bound for $T_n$  has to give up the form in \eqref{usual_form}, but, interestingly, not too much; our main theorem (\thmref{main}) suggests that, to restore the validity, it suffices to minimally augment the bound  with an additive correction term
\[
\exp\Bigg(-\frac{c(m)n \sigma^6 }{ (\bE[|h(X_1, \dots, X_m)|^3])^2} \Bigg)
\]
 that decays exponentially in $n$, for a small   constant $c(m) > 0$.

Our proof follows Stein's method, in a similar vein as our  work \citep{leungshaounif} on developing uniform B-E bounds for self-normalized nonlinear statistics. We  comment on two major departures in terms of techniques: First, to elicit the nonuniformity in $x$, considerably more delicate censoring techniques than the ones in \citet{leungshaounif} have to be employed. Secondly, to obtain the correction  term that decays exponentially in $n$, we analyze the  Jackknife estimate  of $\sigma$ by proving an exponential lower-tail bound developed for U-statistics with non-negative kernels (\lemref{chernoff_lower_tail_bdd_U_stat}); the latter result is a crucial technical tool, which naturally extends a similar result for a sum of non-negative random variables and is of independent interest.

{\bf Organization.} \secref{U_stat_review}  covers the basics of  Studentized U-statistics, and   revisits \citet{novak2005self}'s data example to deduce that the nonuniform bound in the usual form of \eqref{usual_form} cannot be valid. \secref{main} states our new nonuniform B-E bounds, including a general one for Studentized U-statistics and a further refined one for the t-statistic.  \secsref{mainpf} and \secssref{sn_sum_pf} respectively prove the two theorems in \secref{main}, with the appendices  covering additional technical proofs integral to them.

{\bf Notation}. For any $p \geq 1$,  we use $\|X\|_p = ( \mathbb{E}|X|^p)^{1/p}$ to denote the $L_p$-norm of  any real-valued random variable $X$;   if $f:\cX^L \rightarrow \bR$ is any function in $L \in \{1, \dots, n\}$ arguments, we may  use $\bE[f]$ as shorthand for $\bE[f(X_1, \dots, X_L)]$; likewise, we may use $\|f\|_p$ as a shorthand for the $p$-norm $\|f(X_1, \dots, X_L)\|_p$.
 If $a, b \in \bR$, we let $a \vee b = \max(a, b)$ and $a \wedge b = \min(a, b)$. $\barPhi (\cdot) \equiv 1 - \Phi(\cdot)$ is the standard normal survival function, $\phi(\cdot)$ denotes the standard normal density, and  $I(\cdot)$ denotes the indicator function.    For any subset $\cS \subset \{1, \dots, n\}$, we shall let  $X_{\cS} \equiv (X_s)_{s \in \cS}$ be a vector of variables from $X_1, \dots, X_n$ with sample indices in $\cS$, and  $x_{\cS} = (x_s)_{s \in \cS}$ be a similar sub-vector of any generic vector $(x_1, \dots, x_n) \in \bR^n$.
 $C, c, C_1, c_1, C_2, c_2 \dots$   denote  unspecified \emph{absolute} positive constants, where "absolute" means they are universal for all underlying distributions of the variables involved and do not depend on other quantities; if a  positive  constant does depend  on other quantities such as $a$ and/or $b$ \emph{exclusively},  it will be explicitly specified as $C(a)$, $C(a,b)$, $c(a)$, $c(a, b)$, etc. to emphasize the dependence on $a$, $(a,b)$, etc. \emph{All these absolute constants generally differ in values at different occurences}.

\section{Studentized U-statistics and \citet{novak2005self}'s example} \label{sec:U_stat_review}

We first review the basics of Studentized  U-statistics.   With 

\[
q_i = \frac{1}{{n-1 \choose m-1}} \sum_{\substack{1 \leq i_1 < \dots < i_{m-1} \leq n\\ i_l \neq i \text{ for } l = 1, \dots, m-1}} h(X_i, X_{i_1}, \dots, X_{i_{m-1}}), \qquad i = 1, \dots, n,
\]
serving as proxies  for the unknown quantities $g(X_1), \dots, g(X_n)$, the  "leave-one-out" Jackknife estimator \citep{arvesen1969jackknifing} for $\sigma^2$ is constructed as
\begin{equation} \label{real_studentizer}
\sigmahat^2 = \frac{n-1}{(n-m)^2} \sum_{i=1}^n (q_i- U_n)^2 
= \frac{n-1}{(n-m)^2}\Big( \sum_{i=1}^n q_i^2 - n U_n^2 \Big)
\end{equation}
to define the \emph{Studentized U-statistic}
\begin{equation}\label{student_utat}
\quad T_n  =  \frac{\sqrt{n}}{m \sigmahat} U_n.
\end{equation}
For the special case of $m =1$ and the kernel in \eqref{sample_mean_as_ustat}, one can check that $T_n$ is precisely the Student's t-statistic
\citep{student1908probable}
\[
T_{student} \equiv \frac{\sqrt{n} \bar{X}_n}{s_n},
\]
where $\bar{X}_n = n^{-1}\sum_{i=1}^n X_i$ and $s_n^2 = (n-1)^{-1} \sum_{i=1}^n (X_i - \bar{X}_n)^2$.  
It is instructive to clarify the value taken upon by $T_n$  when $\sigmahat$ is equal to zero, which could be the case for some realizations of the data.
The following convention is adopted:
\begin{convention} [Convention for $T_n$ when $\sigmahat = 0$] \label{con:convention1}
\ \
\begin{enumerate}
\item  If $\sigmahat = 0$ and $U_n \neq 0$,  $T_n$ is assigned the value $+\infty$ or $- \infty$ following the sign of $U_n$. 
\item  If $\sigmahat = 0$ and $U_n = 0$,  $T_n$ is assigned the value $0$. 
\end{enumerate}
\end{convention}

Under this convention, there is no ambiguity in understanding an event like $\{T_n \leq x\}$ for any $x \in \bR$ and  its probability. Recently, the following \emph{uniform} B-E bound has been established for $T_n$:
\begin{theorem}[Uniform B-E bound for Studentized U-statistics, \citet{leungshaounif}] \label{thm:BE_unif} 
Assume \eqref{zero_mean_kernel_assumption}-\eqref{non_degeneracy}, $2m < n$  and 
$
\bE[|h|^3] < \infty$. For a positive absolute constant $C(m) >0$ depending on $m$ only,  the following Berry-Esseen bound holds:
\begin{equation*}
\sup_{x \in \bR}|P(T_n \leq x) - \Phi(x)| \leq 
\frac{ C(m) }{\sqrt{n}}\left\{  \frac{  \|h\|_2^2}{\sigma^2} + \frac{\|g\|_3^2\|h\|_3}{\sigma^3}  \right\}.
\end{equation*}
In particular,  the bound above can be further simplified as 
\begin{equation}   \label{BE_Tn}
\sup_{x \in \bR}|P(T_n \leq x) - \Phi(x)| \leq 
\frac{ C(m) }{\sqrt{n}}\frac{\bE[|h|^3]}{\sigma^3}  .
\end{equation}
\end{theorem}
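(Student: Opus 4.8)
The plan is to recast $T_n$ as a \emph{self-normalized nonlinear statistic} — a ratio whose numerator is the H\'ajek projection of $U_n$ plus a small remainder and whose squared denominator is $\frac{1}{n}\sum_i g(X_i)^2$ plus a small remainder — and then to run Stein's method, along the lines of \citep{leungshaounif}, on a \emph{censored} version of this ratio. First I would strip $U_n$ and $\sigmahat^2$ to their linear parts via the Hoeffding decomposition. Writing $U_n=\frac{m}{n}\sum_{i=1}^n g(X_i)+D$ with $D$ collecting the canonical components of orders $2,\dots,m$, the mutual orthogonality of the Hoeffding pieces together with $\binom{n}{c}^{-1}\le C(m)n^{-c}$ (valid for $2m<n$) and the elementary bound $\|g_c\|_2\le 2^m\|h\|_2$ give $\|D\|_2\le C(m)\|h\|_2/n$. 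Each $q_i$ is itself a degree-$(m-1)$ U-statistic in $\{X_j\}_{j\ne i}$ with H\'ajek projection $g(X_i)$, so $q_i=g(X_i)+\rho_i$ with $\|\rho_i\|_2\le C(m)\|h\|_2/\sqrt n$. Expanding $\sigmahat^2=\frac{n-1}{(n-m)^2}\bigl(\sum_i q_i^2-nU_n^2\bigr)$, the dominant error is the cross term $\frac{1}{n}\sum_i g(X_i)\rho_i$, of $L_1$-size $O(\sigma\|h\|_2/\sqrt n)$ by Cauchy--Schwarz, while $\frac{1}{n}\sum_i\rho_i^2$, $U_n^2$ and the $O(m/n)$ discrepancy between $\frac{n-1}{(n-m)^2}$ and $\frac{1}{n}$ contribute only $O(\|h\|_2^2/n)$; using $\|g\|_2=\sigma\le\|h\|_2$, one obtains $\sigmahat^2=\frac{1}{n}\sum_{i=1}^n g(X_i)^2+D_2$ with $\|D_2\|_1\le C(m)\|h\|_2^2/\sqrt n$. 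Altogether,
\[
T_n=\frac{\frac{1}{\sqrt n}\sum_{i=1}^n g(X_i)+D_1}{\sqrt{\frac{1}{n}\sum_{i=1}^n g(X_i)^2+D_2}},\qquad\|D_1\|_2\le\frac{C(m)\|h\|_2}{\sqrt n},\quad\|D_2\|_1\le\frac{C(m)\|h\|_2^2}{\sqrt n},
\]
where $D_1=\frac{\sqrt n}{m}D$ and the denominator equals $\sqrt{\sigmahat^2}\ge0$, so that $\{\sigmahat=0\}$ is governed by \conref{convention1}; thus $T_n$ is a self-normalized nonlinear statistic in the i.i.d.\ mean-zero summands $g(X_1),\dots,g(X_n)$ with negligible numerator and denominator perturbations.

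The second step is to apply Stein's method to $T_n$ after \emph{variable censoring}: fix a threshold $\tau$ of order $\sigma\sqrt n$ and truncate each observation's contribution according to whether $|g(X_i)|>\tau$, imposing this censoring throughout — on the linear sum $\sum_i g(X_i)$ and, through its effect on the kernel's arguments, on $U_n$, the $q_i$'s and hence $D_1$ and $D_2$ — so that the censored objects remain U-statistics to which the Stein estimates for self-normalized sums apply. With $D_1=D_2=0$ those estimates reproduce, within this framework, the classical self-normalized Berry--Esseen bound $\le C\|g\|_3^3/(\sqrt n\,\sigma^3)$ for $\sum_i g(X_i)/\sqrt{\sum_i g(X_i)^2}$; the concentration and smoothing steps together with the errors from reinstating $D_1$ and $D_2$ are controlled by $C\bigl(\|D_1\|_2/\sigma+\|D_2\|_1/\sigma^2\bigr)\le C(m)\|h\|_2^2/(\sqrt n\,\sigma^2)$; and the de-censoring errors, estimated via $\bE[|g|\,I(|g|>\tau)]\le\|g\|_3^3/\tau^2$ for the linear part and, by H\"older, $\bE[|h|\,I(|g(X_j)|>\tau)]\le\|h\|_3\|g\|_3^2/\tau^2$ for the censored kernel arguments, contribute $\le C\|g\|_3^3/(\sqrt n\,\sigma^3)+C(m)\|g\|_3^2\|h\|_3/(\sqrt n\,\sigma^3)$. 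The event $\{\sigmahat\text{ small}\}$ — in particular $\{\sigmahat=0\}$, on which $\frac{1}{n}\sum_i g(X_i)^2=-D_2$ — is disposed of by a crude lower-tail estimate for the non-negative sum $\frac{1}{n}\sum_i g(X_i)^2$ (a truncated Chebyshev or Bernstein bound), whose contribution is again dominated by the terms above. Assuming, as we may, that $n$ exceeds a constant depending only on $m$ (otherwise the asserted bound exceeds $1$, since $\|h\|_2\ge\sigma$), we collect the pieces and use $\|g\|_3\le\|h\|_3$ to obtain
\[
\sup_{x\in\bR}|P(T_n\le x)-\Phi(x)|\le\frac{C(m)}{\sqrt n}\left\{\frac{\|h\|_2^2}{\sigma^2}+\frac{\|g\|_3^2\|h\|_3}{\sigma^3}\right\};
\]
the simplification \eqref{BE_Tn} then follows since $\|g\|_3\le\|h\|_3$ and $\sigma=\|g\|_2\le\|h\|_2\le\|h\|_3$ (Jensen) make both bracketed terms at most $\|h\|_3^3/\sigma^3=\bE[|h|^3]/\sigma^3$.

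The step I expect to be the main obstacle is the variable censoring itself. Censoring a sum is routine, but censoring a U-statistic is delicate: each kernel evaluation couples $m$ of the variables, so the truncation must be keyed to the canonical function $g$ rather than to $h$ directly, and it must be arranged so that $U_n$, the $q_i$'s and $\sigmahat^2$ all retain clean Hoeffding-type decompositions after censoring and so that every cross term manufactured by the censoring is $O(\|h\|_3/\sqrt n)$ in the appropriate norm — all while never invoking more than a third moment of $h$, for which there is no slack anywhere in the argument. A secondary difficulty is the coupling of the random numerator and random denominator of $T_n$ inside the Stein equation, with the attendant need to control the lower tail of $\sigmahat^2$; this is mild at the present (uniform) level of accuracy, but it is precisely the ingredient that, sharpened to the exponential rate of \lemref{chernoff_lower_tail_bdd_U_stat}, the nonuniform refinement in \thmref{main} will require.
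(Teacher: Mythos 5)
The paper does not prove \thmref{BE_unif} here — it is imported by citation from \citet{leungshaounif} — so there is no in-paper proof to compare against line by line. That said, your plan is the same framework that \citet{leungshaounif} uses and that \secref{mainpf} of this paper adapts for the nonuniform case: write $T_n = (W + D_1)/(1+D_2)^{1/2}$ with $W = \sum_i g(X_i)/(\sqrt n\,\sigma)$ via the Hoeffding decomposition, apply variable censoring at the scale $|g(X_i)|\le\sigma\sqrt n$ (i.e.\ $|\xi_i|\le 1$), and run Stein's method with a randomized concentration inequality for the censored sum. So the high-level route is the right one.

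Two cautions, though, about the way you have stated the intermediate bounds. First, the pre-censoring estimate ``$\|D_2\|_1\le C(m)\|h\|_2^2/\sqrt n$'' is not available under a third-moment assumption alone: the dominant piece $V_n^2-1 = \tfrac1n\sum_i(g(X_i)^2-\sigma^2)/\sigma^2$ has $L_1$ norm controlled by $\sqrt{\var(g^2)}/\sqrt n$, which needs $\|g\|_4<\infty$. This is exactly what the censoring is for: the machinery works with the censored pieces $\delta_{0,b}=\sum_i(\xi_{b,i}^2-\bE\xi_{b,i}^2)$, $\bardelta_1$, $\bardelta_{2,b}$, whose second (and $3/2$-th) moments are controlled by $\sum_i\bE|\xi_i|^3$ and $\|g\|_3\|h\|_3/\sqrt n$ respectively. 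You should present the censored $D_2$ first and only then quote moment bounds, rather than assert a moment bound on the raw $D_2$. Second, feeding only $\|D_1\|_2$ and $\|D_2\|_1$ into Stein's method is not enough: the exponential randomized concentration inequality (\lemref{modified_RCI_bdd}) requires control of the ``leave-one-out'' increments $\|D_1-D_1^{(i)}\|_2$ and $\|D_2-D_2^{(i)}\|_{3/2}$, summed against $\|\xi_i\|_2$ or $\|\xi_i\|_3$. Those increment bounds are where the actual $1/\sqrt n$ rate and the $\|g\|_3^2\|h\|_3/\sigma^3$ factor come from, and they depend on the U-statistic structure of $D_1$ and $\delta_2$ in a way that a single $L_1$ or $L_2$ bound on $D_2$ does not capture. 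Neither issue is fatal — your sketch identifies the correct architecture and correctly flags censoring as the delicate point — but the proof would need those increment bounds spelled out before it closes.
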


While the uniform bound in \eqref{BE_Tn} resembles the uniform bound for standardized U-statistics in \eqref{standardized_ustat_unif_BE}, as mentioned in \secref{intro}, it is impossible to obtain a nonuniform bound of the form in \eqref{usual_form} that resembles  the nonuniform bound in \eqref{standardized_ustat_nonunif_BE} for standardized U-statistics. To see this, we shall first revisit how \citet[p.342-343]{novak2005self}  refuted the prospective nonuniform bound for the self-normalized sum in  \eqref{wrong_sn_sum_nonunif_bdd}, via constructing $X_1, \dots, X_n$ as i.i.d. binary variables such that
\begin{equation} \label{bin_data_def}
P\Bigl(X_i = p^{1/2} (1 - p)^{-1/2}\Bigr) = 1- p \text{ and } P\Bigl(X_i = - (1 - p)^{1/2} p^{-1/2}\Bigr) = p
\end{equation}
for some  $p \in (0,1)$; the expectation, as well as the second and third absolute moments of $X_1$ is
 \begin{equation} \label{novak_moments}
\bE[X_1] = 0, \quad  \bE[X_1^2] = 1  \text{ and }
\bE[|X_1|^{3}] = p^{3/2} (1 - p)^{-1/2} + (1 - p)^{3/2} p^{-1/2}.
\end{equation} 
For such data, by letting
\begin{equation} \label{novak_param_choice}
p = p_n \equiv n^{-1} \text{ and }x = x_n \equiv \sqrt{n} - \epsilon \text{ for any small fixed constant } \epsilon > 0,
\end{equation}
the right hand side of \eqref{wrong_sn_sum_nonunif_bdd} is seen to be equal to
\begin{equation} \label{rhs_sn_presumed_bdd}
C \frac{ p_n^{\frac{3}{2}} (1 - p_n)^{-\frac{1}{2}} + (1 - p_n)^{\frac{3}{2}} p_n^{-\frac{1}{2}}}{\sqrt{n} } d(|x_n|)\\
=   C\left\{ n^{- 2 } (1 - n^{-1})^{- 1/2} + (1 - n^{-1})^{3/2}  \right\} d(|x_n|).
\end{equation}
Suppose, towards a contradiction, that the bound in \eqref{wrong_sn_sum_nonunif_bdd} \emph{does} hold.  Consider the event 
\begin{equation} \label{cE_event}
\cE_n \equiv \{X_1 = \dots = X_n = p_n^{1/2} (1 - p_n)^{-1/2}\},
\end{equation}
on which the self-normalized sum $S_n/V_n$ can be easily seen to take upon the value $\sqrt{n}$, which is greater than $x_n$; one can then consequently derive the lower bound $e^{-1} > 0$  for the  "liminf" of the left hand side in \eqref{wrong_sn_sum_nonunif_bdd} as:
\begin{align}
\liminf_{n \rightarrow \infty} \Bigl[P(S_n/V_n\leq x_n) - \Phi(x_n) \Bigr] 
 &= \liminf_{n \rightarrow \infty} \Bigl[P(S_n/V_n > x_n) - \barPhi(x_n) \Bigr] \notag\\
&\geq \liminf_{n \rightarrow \infty} \Bigl[P(\cE_n) - \barPhi(x_n) \Bigr]\notag\\
&=  \liminf_{n \rightarrow \infty} \Big[(1 -p_n)^n - \barPhi(x_n) \Big] = 1/e  \label{lhs_prp_sn}.
\end{align}
However, this contradicts the presumed bound in  \eqref{wrong_sn_sum_nonunif_bdd}, since the right hand side in \eqref{rhs_sn_presumed_bdd} converges to zero as $n \rightarrow \infty$,  given the assumed property $\lim_{x \rightarrow \infty}d(x) = 0$.

Likewise,  the nonuniform Berry-Esseen-type  bound  \eqref{usual_form} can't hold for Studentized U-statistics either. In fact, assuming the data are as in \citet{novak2005self}'s construction in  \eqref{bin_data_def} again, we hereby show that an even wider class of bounds that include \eqref{usual_form} as a special case cannot hold: We will show by contradiction that, it is impossible to have a bound of the form
\begin{equation}\label{impossible_general_form}
|P(T_n \leq x) - \Phi(x)| \leq C(m, n,  x, \mathcal{L}_{X_1}, h ),
\end{equation}
where the right hand side is an absolute term depending only on  $m$, $n$, $x$, the law $\mathcal{L}_{X_1}$ of the representative variable $X_1$ and (attributes of) the kernel $h$ in such a way that, 
\begin{equation} \label{prop_of_Cmnhx}
 \lim_{x \rightarrow \infty}C(m, n,  x, \mathcal{L}_{X_1}, h ) = 0 \text{ when the other parameters } (m, n,  \mathcal{L}_{X_1}, h) \text{ are held fixed}.
\end{equation}
 First, we define a special real-valued, symmetric kernel $h$ of degree $m \geq 1$ by
\begin{equation} \label{sum_kernel}
h: \bR^m \rightarrow \bR \text{ and }  h(x_1, \dots, x_m) \equiv x_1 +\dots + x_m.
\end{equation}
One can check with elementary calculations that
$
U_n = \frac{m}{n} \sum_{i=1}^n X_i$
and
\begin{equation*}
\sum_{i=1}^n q_i^2 - n U_n^2 
%=  \bigg(\frac{n-m}{n-1} \bigg)^2 \sum_{i=1}^n X_i^2 + \frac{(m-1)(n-2m + nm)}{(n-1)^2} \bigg( \sum_{i=1}^n X_i \bigg)^2  - \frac{m^2}{n} \bigg( \sum_{i=1}^n X_i \bigg)^2 \\
=  \bigg(\frac{n-m}{n-1} \bigg)^2 \bigg(\sum_{i=1}^n X_i^2 -  n (\bar{X}_n)^2 \bigg)
\end{equation*}
when the U-statistic is formed with this particular kernel  in \eqref{sum_kernel}; as such, from the definition of $\hat{\sigma}$ in \eqref{real_studentizer}, one can see that 
\begin{equation} \label{Tn_eq_Tstudent}
T_n = T_{student} \text{ for any $m \geq 1$ and the kernel in } \eqref{sum_kernel}.
\end{equation}
 Next,  recall   the classical relationship between the self-normalized sum and Student's t-statistics \citep{efron1969}:
\begin{equation} \label{efron_relationship}
T_{student}  = \frac{S_n}{V_n} \Bigg\{ \frac{n-1}{n - (S_n/V_n)^2} \Bigg\}^{1/2}.
\end{equation}
On the event $\cE_n$ in \eqref{cE_event},  $S_n/V_n$ is equal to $\sqrt{n}$ and hence $T_{student}$ takes the value $\infty$ in light of \eqref{efron_relationship}; as such,  by the equality in \eqref{Tn_eq_Tstudent},
\begin{equation} \label{Tn_eq_infty_on_En}
T_n = \infty \text{ on the event } \cE_n, \text{ for all } n.
\end{equation}
Since 
$
\lim_{n \rightarrow \infty}P(\cE_n)  = \lim_{n \rightarrow \infty}(1 -p_n)^n  = e^{-1}
$,
we will let $N \in \mathbb{N}$ be such that $P(\cE_N) \geq e^{-1}/2$. However, the fact in \eqref{Tn_eq_infty_on_En} implies that
\[
\liminf_{x \rightarrow \infty} (P(T_N \leq x) - \Phi(x)) = \liminf_{x \rightarrow \infty} (P(T_N > x) - \bar{\Phi}(x)) \geq P(\cE_N) - \lim_{x \rightarrow \infty} \bar{\Phi}(x) \geq e^{-1}/2,
\]
which in turns implies
%\begin{equation} \label{breaking_pt}
%\liminf_{x \rightarrow \infty} |P(T_N \leq x) - \Phi(x)| \geq e^{-1}/2.
%\end{equation}
$\liminf_{x \rightarrow \infty} |P(T_N \leq x) - \Phi(x)| \geq e^{-1}/2$. 
Apparently, the last fact breaks the bound  in \eqref{impossible_general_form} with the presumed property in \eqref{prop_of_Cmnhx}!

\section{Main results}\label{sec:main}

The moral of  \citet{novak2005self}'s example  in \secref{U_stat_review} is that, due to the way that the Jackknife Studentizer  $\hat{\sigma}$ in  \eqref{real_studentizer} is constructed, when the distribution of the data in question is such that $T_n$ can take its largest possible value (i.e. $\infty$) with a non-negligible probability, a bound like \eqref{usual_form}  may fail to hold. 
 We now state our main theorem, which contains what we consider to be the correct nonuniform  B-E bound for Studentized U-statistics; it suggests that it is enough to augment the  form in \eqref{usual_form} with an extra term that decays exponentially in $n$.

\begin{theorem}[Nonuniform B-E bounds for Studentized U-statistics]\label{thm:main}
Let $X_1, \dots, X_n \in \cX$ be independently and identically distributed random variables. Under \eqref{zero_mean_kernel_assumption}-\eqref{non_degeneracy}, $\max(2, m^2) < n$ and the moment condition $\bE[|h|^3] < \infty$, for any $x \in \bR$, there exist positive absolute constants $C(m)$, $c_1(m)$ and $c_2(m)$ such that
 \begin{multline}\label{BE_Tn_one_sample}
|P(T_n \leq x) - \Phi(x)| \leq \exp\Bigg(-\frac{c_1(m)n \sigma^6 }{ (\bE[|h|^3])^2} \Bigg) +
 \\
C(m)\Bigg\{ \frac{1}{ (1 +|x|^3)} \Bigg( \frac{ \bE[|h|^3]}{ n^{3/2} \sigma^3} + \frac{\bE[|g|^3]}{\sqrt{n} \sigma^3}\Bigg) +
 \frac{1 }{e^{c_2(m)|x|} \sqrt{n}} \Bigg(\frac{\|g\|_3^2 \|h\|_3}{\sigma^3} + \frac{  \|h\|_3^2}{\sigma^2 }\Bigg) \Bigg\};
\end{multline}
In particular, this implies, for some positive absolute constants $C(m)$ and $c(m)$, 
\begin{equation} \label{BE_Tn_one_sample_simpler}
|P(T_n \leq x) - \Phi(x)| \leq   \exp\Bigg(-\frac{c(m)n \sigma^6 }{ (\bE[|h|^3])^2} \Bigg) + \frac{C(m)\bE[|h|^3]}{(1+ |x|^3)\sqrt{n} \sigma^3}.
\end{equation}
\end{theorem}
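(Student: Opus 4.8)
The plan is to prove \eqref{BE_Tn_one_sample}; the simplified bound \eqref{BE_Tn_one_sample_simpler} is then immediate from it, by the elementary moment inequalities $\bE[|g|^3]\le\bE[|h|^3]$ (Jensen applied to $g(x)=\bE[h(x,X_2,\dots,X_m)]$) and $\sigma\le\|g\|_3\le\|h\|_3$ — which give $\|h\|_3^2/\sigma^2\le\bE[|h|^3]/\sigma^3$ and $\|g\|_3^2\|h\|_3/\sigma^3\le\bE[|h|^3]/\sigma^3$ — together with $n^{3/2}\ge\sqrt n$ and the fact that $e^{-c_2(m)|x|}\le C(m)/(1+|x|^3)$. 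Two reductions simplify the analysis of \eqref{BE_Tn_one_sample}. First, replacing $h$ by $-h$ turns $T_n$ into $-T_n$ while fixing $\sigma,\|g\|_3,\|h\|_3$, so it suffices to bound $P(T_n>x)-\barPhi(x)$ from above and below for $x\ge 0$. Second, for $x$ below a suitable $m$-dependent constant the (un-simplified) uniform bound of \thmref{BE_unif} already implies \eqref{BE_Tn_one_sample} after enlarging $C(m)$, since $e^{-c_2(m)|x|}$ is then bounded below and $\|h\|_2\le\|h\|_3$; one may likewise assume $\bE[|h|^3]/(\sqrt n\sigma^3)$ is smaller than any prescribed $m$-dependent constant, the complementary regime being easy. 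So in what follows $x$ is large and $\bE[|h|^3]/(\sqrt n\sigma^3)$ is small.

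The decisive new step is to quarantine the event on which the studentizer $\sigmahat$ in \eqref{real_studentizer} degenerates. I would introduce an event $\mathcal{A}$ on which $\sigmahat^2\ge\kappa(m)\sigma^2$ for a constant $\kappa(m)>0$, built so that $\mathcal{A}^c$ is controlled by \lemref{chernoff_lower_tail_bdd_U_stat}: one bounds $\sigmahat^2$ below by a constant multiple of a U-statistic with \emph{non-negative} kernel whose mean is of order $\sigma^2$ — for $m=1$ this is the classical representation $\sigmahat^2=s_n^2=\tfrac{1}{n(n-1)}\sum_{i<j}(X_i-X_j)^2$ — and applies \lemref{chernoff_lower_tail_bdd_U_stat} with that kernel truncated at a level of order $\|g\|_3^3/\sigma^2$, the truncation being forced by the three-moment hypothesis, to get $P(\mathcal{A}^c)\le\exp(-c_1(m)\,n\sigma^6/\|g\|_3^6)\le\exp(-c_1(m)\,n\sigma^6/(\bE[|h|^3])^2)$, the last step by $\|g\|_3^3=\bE[|g|^3]\le\bE[|h|^3]$. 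Since $\{T_n>x\}$ and $\{T_n>x\}\cap\mathcal{A}$ differ by a subset of $\mathcal{A}^c$, this produces exactly the exponential correction term and reduces the problem to estimating $P(\{T_n>x\}\cap\mathcal{A})-\barPhi(x)$, where now $\sigmahat$ is comparable to $\sigma$.

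On $\mathcal{A}$, write $T_n=\sqrt n\,U_n/(m\sigmahat)$, and use the Hoeffding decomposition $U_n=\tfrac mn\sum_{i=1}^n g(X_i)+\Delta_n$ together with an expansion $\sigmahat^2=\tfrac1n\sum_{i=1}^n g(X_i)^2+(\text{remainder})$ to see that, up to remainders of smaller order, $T_n$ coincides with the self-normalized sum $\sum_i g(X_i)/(\sum_i g(X_i)^2)^{1/2}$ of the i.i.d.\ mean-zero variables $g(X_i)$. I would then run the variable-censoring machinery of \citet{leungshaounif}, but with censoring thresholds chosen to grow with $x$ — loosely, of order $\sqrt n(1+|x|)$ for the observations carrying the linear term, for which the discarded probability is then of order $\bE[|g|^3]/((1+|x|^3)\sqrt n\sigma^3)$, plus the coarser contribution $\bE[|h|^3]/(n^{3/2}\sigma^3)$ from the kernel-level pieces — so as to write, off an event of that probability, $\{T_n>x\}$ as $\{W+D>z\}$ for a sum $W=\sum_i\xi_i$ of bounded independent summands, a small nonlinear remainder $D$ absorbing $\Delta_n$ and the studentizer remainder, and $z$ close to $x$. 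A nonuniform Stein-type bound for such nonlinear statistics (in the spirit of \citet{chen2007normal}) applied to $W+D$ then contributes a linear-part term of order $(1+|x|^3)^{-1}\sum_i\bE[|\xi_i|^3]\asymp(1+|x|^3)^{-1}\bE[|g|^3]/(\sqrt n\sigma^3)$ and remainder terms governed by $\bE[|D|^2]$ and by concentration factors of the type $\bE[|D|\,I(|W-z|\le\delta)]$; the latter, estimated through the sub-Gaussian tails of the censored $W$, are where the decay $e^{-c_2(m)|x|}$ and the factors $\|g\|_3^2\|h\|_3/\sigma^3$ and $\|h\|_3^2/\sigma^2$ enter. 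Undoing the censoring, reinstating $\mathcal{A}^c$, and comparing $z$ with $x$ (using the rapid decay of $\barPhi$) then assembles the right-hand side of \eqref{BE_Tn_one_sample}.

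The hard part will be this censoring step: the threshold must be calibrated so that it simultaneously linearizes $T_n$ to a self-normalized sum with a genuinely sub-Gaussian censored part $W$, keeps the discarded probability at the level $\bE[|h|^3]/(n^{3/2}\sigma^3)+\bE[|g|^3]/(\sqrt n\sigma^3)$ \emph{with} the extra decay in $|x|$, and leaves a nonlinear remainder $D$ small enough for the Stein bound's remainder terms to be of the stated order. Because $\sigmahat$ couples all $n$ observations, the censoring has to act on individual variables rather than on the kernel, and arranging for the linear part to earn the polynomial decay $(1+|x|^3)^{-1}$ while the nonlinear and studentization corrections — for which only the weaker $e^{-c_2(m)|x|}$ is affordable — stay controlled is considerably more delicate than in the uniform analysis of \citet{leungshaounif}. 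A subsidiary technical point is obtaining the non-negative-kernel lower bound for $\sigmahat^2$ with a truncation level compatible with only three moments, so that \lemref{chernoff_lower_tail_bdd_U_stat} delivers precisely the exponent $n\sigma^6/(\bE[|h|^3])^2$.
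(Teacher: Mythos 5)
Your proposal identifies the two structural pillars that the paper's proof also rests on — extracting the exponential correction term from a lower-tail bound for $\hat\sigma^2$ viewed as a non-negative-kernel U-statistic via \lemref{chernoff_lower_tail_bdd_U_stat}, and then controlling the remainder by $x$-sensitive variable censoring combined with Stein's method for self-normalized nonlinear statistics — so the overall route is the same. That said, two technical points in your sketch diverge from what the paper actually does, and the first is based on a misconception worth correcting.

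You write that you would apply \lemref{chernoff_lower_tail_bdd_U_stat} to the kernel of $\hat\sigma^2$ only after truncating it, "the truncation being forced by the three-moment hypothesis." This is not the case: the lemma allows any $p\in(1,2]$, and the paper shows directly that $\bE[|\frakh|^{3/2}]\le C(m)\,\bE[|h|^3]$ for the (untruncated) non-negative degree-$2m$ kernel $\frakh$ induced by $\hat\sigma^2$, so one simply takes $p=3/2$ and obtains the exponent $\asymp n\sigma^6/(\bE[|h|^3])^2$ with no truncation whatsoever. Your truncation plan, while not hopeless, would require additional work to verify that the truncated kernel still has mean bounded below by a fixed fraction of $\sigma^2$ under only three moments of $h$, and it buys nothing. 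Second, your proposal to censor the linear summands $g(X_i)$ at a level $\sim\sqrt n(1+|x|)$, so that the discarded probability itself inherits the polynomial decay in $x$, is not how the paper obtains the factor $(1+|x|^3)^{-1}$. The paper keeps the censoring of $\xi_i=g(X_i)/\sqrt n$ at the fixed level $\pm 1$ (only the Hoeffding remainder $D_1$ is censored at an $x$-dependent level $\frakcm x/4$); the nonuniform factor on the linear-part discard arises instead from a concentration estimate on $\{W\ge \mathrm{const}\cdot x\}\cap\{\max_i|\xi_i|>1\}$ (\lemref{bdd_for_remnants}$(ii)$, using a Rosenthal-type bound). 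Your $x$-growing threshold might be made to work, but you have not checked that it yields the stated rate, and it risks destroying the sub-Gaussianity of $W_b$ that the exponential-in-$x$ terms elsewhere depend on. The rest of your proposal — the Hoeffding decomposition of $T_n$ into $W+D_1$ over a perturbed studentizer, the Stein-equation analysis, and the final assembly including the deduction of \eqref{BE_Tn_one_sample_simpler} from \eqref{BE_Tn_one_sample} — matches the paper.
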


\thmref{main} is proved in \secref{mainpf}. Note that \eqref{BE_Tn_one_sample_simpler} is a simple consequence of \eqref{BE_Tn_one_sample} because $\|g\|_3 \leq \|h\|_3$, due to the basic U-statistic property in \eqref{Jensen} below. For the  choice of the probability $p = p_n = n^{-1}$  in \eqref{novak_param_choice}, let us now re-examine \citet{novak2005self}'s binary data in \eqref{bin_data_def} and our special kernel $h$ in \eqref{sum_kernel}  to demonstrate three features of our new bounds  in \thmref{main}. Note that, 
by considering $x_n$ in  \eqref{novak_param_choice} and the fact established in \eqref{Tn_eq_infty_on_En} for the event  $\cE_n$ defined in \eqref{cE_event}, we must have 
\begin{equation} \label{lower_eminus1_bdd}
\liminf_{n \rightarrow \infty} [P (T_n \leq x_n ) - \Phi(x_n)] = \liminf_{n \rightarrow \infty} [P (T_n > x_n ) - \bar{\Phi}(x_n)] \geq \liminf_{n \rightarrow \infty} [P (\cE_n) - \Phi(x_n)] = e^{-1}.
\end{equation}
Moreover, we will also  leverage the following moment bounds  for the kernel in \eqref{sum_kernel},
\begin{equation}\label{lower_upper_rosenthal} 
8^{-1} m \bE[|X_1|^3]    
 \leq  \bE[|h|^{3}]  \leq   
C(m)\bE[|X_1|^3],
\end{equation}
where $C(m)>0$ is an absolute constant depending only on $m$; the bounds in \eqref{lower_upper_rosenthal}
are  direct consequences of the classical Rosenthal's inequalities \citep[Theorem 3]{rosenthal1970subspaces}.

\begin{enumerate} 
\item {\bf The new B-E bounds can  accommodate  more "unusual" data distributions}: 
%, by Rosenthal's inequalities \citep[Theorem 3]{rosenthal1970subspaces}, 
% the kernel in \eqref{sum_kernel}  entails the  moment bounds 
%\begin{equation}\label{lower_upper_rosenthal} 
%8^{-1} m \bE[|X_1|^3]    
% \leq  \bE[|h|^{3}]  \leq   
%C(m)\bE[|X_1|^3]
%\end{equation}
%for a constant $C(m)$ depending only on $m$.
By the lower bound $8^{-1} m \bE[|X_1|^3]  \leq  \bE[|h|^{3}]$ in  \eqref{lower_upper_rosenthal} and the moment calculations in \eqref{novak_moments},   
%\[
% \bE[|h|^{3}] \geq 8^{-1} m \bE[|X_1|^3] = 8^{-1} m (p^{3/2} (1 - p)^{-1/2} + (1 - p)^{3/2} p^{-1/2}).
%\]
with  the   choice $p = p_n$  in \eqref{novak_param_choice}, we  get
\begin{align*}
\exp \Bigg( - \frac{c(m) n \sigma^6 }{(\bE[|h|^3])^2}\Bigg) &\geq  \exp \Bigg( - \frac{c(m) n  }{(8^{-1} m (n^{-3/2} (1 - n^{-1})^{-1/2} + (1 - n^{-1})^{3/2} n^{1/2}))^2}\Bigg) \\
&\geq \exp \Bigg( - \frac{64 \cdot c(m)  }{m^2 (1 - n^{-1})^{3} }\Bigg).
\end{align*}
 Hence,   given $n >2$, $  \exp ( - \frac{64 \cdot c(m)  }{m^2 (1 - n^{-1})^{3} })$ is  larger than the lower bound $e^{-1}$ in \eqref{lower_eminus1_bdd}  for a sufficiently small $c(m) > 0$; as such, unlike \eqref{impossible_general_form}, the new bounds \eqref{BE_Tn_one_sample} and \eqref{BE_Tn_one_sample_simpler} are not contradicted. 
\item {\bf The correction term could be crucial even when $|x|$ is not large relative  to $n$}: As  $\bE[|h|^3]/\sigma^3 = n^{- 3/2 } (1 - n^{-1})^{- 1/2} + (1 - n^{-1})^{3/2}n^{1/2} \sim \sqrt{n}$ as $n \rightarrow \infty$,  we have 
 \begin{equation} \label{need_correction}
\frac{C(m)\bE[|h|^3]}{(1+ |x|^3)\sqrt{n} \sigma^3} \sim \frac{C(m)}{(1 + |x|^3)}\text{ for large } n.
 \end{equation}
 The last display implies that, for  an unusual data distribution  where the moment ratio $\bE[|h|^3]/\sigma^3$ can be as large as $\sqrt{n}$, the need for having a correction term as in   \eqref{BE_Tn_one_sample_simpler} could arise as long as $x$ is of the order $O(n^a)$ for even a  small $a> 0$, because the term in \eqref{need_correction} could  be already too small to bound  
the left hand side of \eqref{BE_Tn_one_sample_simpler}, as suggested by the lower bound for the "$\liminf_{n \rightarrow \infty}$" in \eqref{lower_eminus1_bdd}.
\item \emph{\bf The  order of $n$ in the correction term is optimal}: \citet{novak2005self}'s example also illustrates the current order of $n$ in our additive correction proposal is optimal.  Suppose, toward a contradiction, that the correction term had instead taken the  form $\exp ( - \frac{c(m) n^a \sigma^6 }{(\bE[|h|^3])^2} )$ for a  power $a > 1$, with a faster decay in $n$.  In light of the upper bound $\bE[|h|^{3}]  \leq   
C(m)\bE[|X_1|^3]$  in \eqref{lower_upper_rosenthal}, such a correction term could then be further upper bounded by 
\begin{equation} \label{rosenthal_resulting_upperbdd}
 \exp \Bigg( - \frac{ c(m) n^{a-1}  }{(n^{-2}  (1 - n^{-1})^{-1/2}+ (1 - n^{-1})^{3/2} )^2 }\Bigg)
\end{equation}
for the binary data in \eqref{bin_data_def}, the parameter choice $p = p_n = n^{-1}$  in \eqref{novak_param_choice}, our kernel in \eqref{sum_kernel} and a sufficiently small constant $c(m) > 0$. Apparently, because $a - 1 >0$, the term in the \eqref{rosenthal_resulting_upperbdd} converges to zero as $n \rightarrow \infty$; this implies that the hypothetical correction term with $a > 1$ cannot bound  the "$\liminf_{n \rightarrow \infty}$" in \eqref{lower_eminus1_bdd}, because even its upper bound in \eqref{rosenthal_resulting_upperbdd} can't!
\end{enumerate}

While we believe \thmref{main} has nailed down the correct nonuniform B-E bounds for Studentized U-statistics, two aspects  related to the degree $m$ shall  warrant further investigation:
\begin{enumerate}
\item 
 In the   B-E bounds for the \emph{standardized} U-statistics,   \citet[Section 3.1]{chen2007normal} has actually  shown  that, as  $m$ increases, the absolute constants,  i.e. $C_1(m)$, $C_2(m)$ in \eqref{standardized_ustat_unif_BE} and \eqref{standardized_ustat_nonunif_BE}, only grow very pleasantly at the rate $\sqrt{m}$. However, for \thmref{main}, 
due to the fundamental  challenge posed by Studentization, it is unclear to us what the best possible (i.e. slowest) growth rate of the constants in $m$ should be. We defer  further  discussion of this to \secref{remark1}, after we have finished proving \thmref{main}.
%
%  we expect the optimal such constants could grow faster than $\sqrt{m}$, but likely not too much. While by carefully keeping track of our proof for \thmref{main}, one can  still show that all the large and small constants in the statement, i.e.  $c_1(m), c_2(m), c(m), C(m)$ in \eqref{BE_Tn_one_sample}  and \eqref{BE_Tn_one_sample_simpler}, still scale polynomially in $m$, but we believe the optimal rate in $m$ is better than what we could derive under the current proof strategy.
\item The condition $\max(2, m^2) < n$ in \thmref{main} is  stronger than the typical $2m < n$ assumed for the uniform B-E bound in \thmref{BE_unif}. As will be seen in \secref{mainpf}, letting $\max(2, m^2) < n$  facilitates our analysis of the lower tail probability of the Studentizer $\sigmahat$ as a non-negative-kernel U-statistic  using the crucial \lemref{chernoff_lower_tail_bdd_U_stat}, which ultimately leads to our correction term with exponential decay in $n$. However, we believe establishing our theorem under  $2m < n$ is potentially feasible, and the related discussion will appear in \secref{remark2}.
\end{enumerate}

Aside from our general result in \thmref{main}, thanks to the  delicate \emph{\cramer-type} moderate deviation theorem  for the self-normalized sum $S_n/V_n$ established in  \citet{jing2003self},  a very refined nonuniform B-E bound for the  Student's t-statistic, the special case of $T_n$ with the kernel in  \eqref{sample_mean_as_ustat}, can be established  in \thmref{nonunif_BE_sn_sum} below; the proof is  \secref{sn_sum_pf}.   It says that the nonuniform term in $x$ can be further strengthened to be decaying exponentially in  $|x|$. It is an open question whether one can similarly strengthen the rate of decay in $|x|$ for our general result in \thmref{main}, as the current state-of-the-art in  \emph{\cramer-type} moderate deviation results for Studentized U-statistics  applies to a restricted class of kernels only \citep[Eqn. $(3.3)$]{shao2016cramer}.

\begin{theorem}[Nonuniform B-E bound for   Student's t-statistic] \label{thm:nonunif_BE_sn_sum}
Let $X_1, \dots, X_n$ be independent and identically distributed real-valued random variables such that $\bE[X_1] =0$, $0 < \bE[X_1^2] < \infty$ and $\bE[|X_1|^3] < \infty$. Assume $n \geq 2$. Then there exist positive absolute constants $C_1, C_2, c_1, c_2 >0$ such that

 \begin{equation*}\label{BE_sn_sum}
\bigg|P\bigg(T_{student}\leq x\bigg) - \Phi(x)\bigg| \leq  C_1\exp \Bigg( \frac{-  c_1 n ( \bE[X_1^2])^3}{ (\bE[X_1^3])^2} \Bigg) +  \frac{C_2}{e^{c_2x^2}} \frac{ \bE[|X_1|^3]}{\sqrt{n} (\bE[X_1^2])^{3/2}}.
\end{equation*}
The same bound can be stated with $T_n$ replaced by the self-normalized sum  $S_n/V_n$, where $S_n$ and $V_n$ are defined in \eqref{Sn_Vn_def}, for possibly different  constants $C_1, C_2, c_1, c_2 > 0$.
\end{theorem}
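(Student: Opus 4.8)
The plan is to reduce the two assertions of \thmref{nonunif_BE_sn_sum} to a single statement about the self-normalized sum $S_n/V_n$ and then to derive that statement by combining a \cramer-type moderate deviation expansion with a crude exponential tail bound. First I would establish the reduction: by the exact Efron identity \eqref{efron_relationship}, the map $u \mapsto u\{(n-1)/(n-u^2)\}^{1/2}$ is an increasing bijection from $(-\sqrt n,\sqrt n)$ onto $\bR$ (with the endpoints $\pm\sqrt n$ sent to $\pm\infty$, consistent with \conref{convention1}), so $\{T_{student}\le x\}=\{S_n/V_n\le y(x)\}$ where $y(x)=x\{(n-1)/(n+x^2-1)\}^{1/2}\cdot$(the appropriate inverse); crucially $|y(x)|\le |x|$ and $|y(x)|\le\sqrt n$ always, and for $|x|\le \sqrt n$ one has $|y(x)|\asymp |x|$ up to a universal constant. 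Hence it suffices to prove the displayed bound for $S_n/V_n$ with $x$ replaced by $y$, and then substitute; the factor $e^{-c_2 x^2}$ survives because $e^{-c_2 y(x)^2}\le e^{-c_2' x^2}$ on the relevant range (and for $|x|$ beyond $\sqrt n$ the correction term dominates trivially since $\Phi$ is then within $e^{-cn}$ of $0$ or $1$). So the heart of the matter is the self-normalized sum.

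For $S_n/V_n$, write $\beta_3 = \bE[|X_1|^3]/(\bE[X_1^2])^{3/2}$ and normalize so $\bE[X_1^2]=1$. The regime split is by $|x|$ versus a threshold of order $\sqrt n/\beta_3$ (equivalently the point where the \cramer{} deviation expansion stops being meaningful). In the \textbf{moderate range} $0\le x \le \varepsilon \sqrt n/\beta_3$ I would invoke the \cramer-type moderate deviation theorem of \citet{jing2003self} for $S_n/V_n$, which gives, uniformly in that range,
\[
\frac{P(S_n/V_n > x)}{\barPhi(x)} = \exp\Bigl(O\bigl(\tfrac{(1+x)^3 \beta_3}{\sqrt n}\bigr)\Bigr)
\]
and the symmetric statement for the lower tail. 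From this, $|P(S_n/V_n\le x)-\Phi(x)| = |P(S_n/V_n>x)-\barPhi(x)| \le \barPhi(x)\bigl(e^{C(1+x)^3\beta_3/\sqrt n}-1\bigr) \le C\barPhi(x)\frac{(1+x)^3\beta_3}{\sqrt n}$ on, say, $0\le x\le \sqrt n/\beta_3$ (so that the exponent is bounded), and since $\barPhi(x)(1+x)^3 \le C e^{-x^2/4}$, this is at most $C e^{-x^2/4}\beta_3/\sqrt n$, which is the second term of the claimed bound with $c_2=1/4$; the case $x\le 0$ is symmetric (or handled by the companion lower-tail expansion). In the \textbf{large range} $x > \sqrt n/\beta_3$, both $\barPhi(x)$ and $e^{-c_2 x^2}\beta_3/\sqrt n$ are of order $\le e^{-cn/\beta_3^2}$, so it is enough to show $P(S_n/V_n > x) \le C_1 e^{-c_1 n/\beta_3^2}$ \emph{for all} $x>\sqrt n/\beta_3$; this is where the first (exponential-in-$n$) term of the bound comes from. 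Here I would get the crude bound $P(S_n/V_n > t) \le P(S_n > t\,V_n) $ and split on whether $V_n^2$ is small: on $\{V_n^2 \ge n/2\}$ we need $P(S_n > t\sqrt{n/2})$, bounded by a Bernstein/Fuk–Nagaev-type tail for the truncated sum (truncating $X_i$ at level $\sqrt n/\beta_3$, with the overshoot controlled by $\beta_3$); on $\{V_n^2 < n/2\}$ we use that $\sum (X_i^2 - 1)$ has deviated by $n/2$, and since $X_i^2-1 \ge -1$ the lower-tail large-deviation/Chernoff estimate for $\sum X_i^2$ — in the same spirit as \lemref{chernoff_lower_tail_bdd_U_stat} but for the elementary $m=1$ case — gives $P(V_n^2<n/2)\le e^{-c n/\beta_3^2}$ (the relevant Bernstein variance proxy being $\bE[(X_i^2-1)^2]$, which is controlled once we know $\bE[|X_i|^3]<\infty$, using $\bE[X^4]$ may be infinite so one truncates $X^2$ and pays $\beta_3$). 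Assembling the two ranges yields the stated bound for $S_n/V_n$, and the reduction gives it for $T_{student}$.

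The main obstacle I anticipate is the large-$x$ regime, specifically getting the exponential tail $P(S_n/V_n>x)\le C_1 e^{-c_1 n/\beta_3^2}$ \emph{with the correct normalization by the third moment only} (not a fourth moment), uniformly over all $x>\sqrt n/\beta_3$. The delicate point is that $V_n^2$ can have a heavy right tail when $X_1$ does, so one cannot naively lower-bound $V_n^2$; the fix is to note $\{S_n/V_n > x\}\subseteq\{S_n>0,\ S_n^2 > x^2 V_n^2\}\subseteq\{ \sum_i(X_i^2 - S_n^2/(x^2 n)\cdot\text{?})\}$ — more cleanly, use the classical identity $\{S_n/V_n>x\} = \{\sum_i(X_i - x^2 X_i^2/ (2\lambda))>\dots\}$ type linearization, or simply the bound $P(S_n/V_n>x)\le P(\sum_i X_i I(|X_i|\le a) > x\sqrt{\text{something}}) + nP(|X_1|>a)$ with $a$ chosen $\asymp \sqrt n/\beta_3$, after which $nP(|X_1|>a)\le n\beta_3\bE[X_1^2]/a \cdot (\text{Markov on third moment})\le \beta_3^2$ is not yet exponentially small, so one actually needs $a$ larger, say $a\asymp \sqrt{n}$, and controls $nP(|X_1|>\sqrt n)\le \beta_3^3/\sqrt n\cdot$— this is still not $e^{-cn}$. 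The clean resolution, and the step I would be most careful about, is to observe that on $\cE_n$-type worst cases the event is genuinely non-exponentially-rare, which is exactly why the theorem only claims $e^{-c_1 n\beta_3^{-2}}$ and not $e^{-c_1 n}$: so in the large-$x$ regime it suffices to show $P(S_n/V_n > \sqrt n/\beta_3) \le C_1 e^{-c_1 n/\beta_3^2}$, and \emph{this} follows because $\{S_n/V_n>\sqrt n/\beta_3\}$ forces $S_n^2 > (n/\beta_3^2)V_n^2 \ge (n/\beta_3^2)\cdot \frac1n S_n^2$ trivially — no — rather one forces $V_n^2$ to be atypically small relative to $S_n$, i.e. $\sum_i X_i^2 \le \beta_3^2 \bar X_n^2 n /1$, reducing to the lower-tail Chernoff bound for $\sum X_i^2$ at a level proportional to $\beta_3^{-2}$, which is precisely the $m=1$ specialization of \lemref{chernoff_lower_tail_bdd_U_stat} and hence gives the $e^{-c_1 n/\beta_3^2}$ decay. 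Making this last chain of implications airtight, with all constants universal and depending on $X_1$ only through $\beta_3$, is the crux of the proof.
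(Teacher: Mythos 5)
Your overall architecture — prove a bound for $S_n/V_n$ in two regimes (moderate $x$ via the Jing--Shao--Wang \cramer{}-type deviation, large $x$ via exponential tail estimates including a lower-tail Chernoff bound for $V_n^2$) and then transfer to $T_{student}$ through the Efron identity — is exactly the paper's route. But there are three concrete gaps, the second of which is fatal as written.

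First, you overstate the valid range of the \cramer{}-type moderate deviation. Under a finite \emph{third} moment only, the Jing--Shao--Wang expansion (and the nonuniform B-E bound the paper cites as their Eqn.~(2.11)) holds for $0\le x\lesssim n^{1/6}$ (the paper's threshold is $n^{1/6}\|X_1\|_2/\|X_1\|_3$), not for $x$ up to $\varepsilon\sqrt n/\beta_3$. The exponent $O((1+x)^3\beta_3/\sqrt n)$ is only $O(1)$ up to $x\asymp n^{1/6}$, so your conversion from ratio form to absolute difference stops there. This leaves a large uncovered window $n^{1/6}\lesssim x\lesssim \sqrt n/\beta_3$ where the claim $\barPhi(x)\lesssim e^{-cn/\beta_3^2}$ is false (e.g.\ $x\asymp n^{1/4}$ gives $\barPhi(x)\asymp e^{-cn^{1/2}}$, far larger than $e^{-cn}$), so you cannot dispose of this window by the correction term alone.

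Second, and more fundamentally, your large-$x$ argument never closes — you say as much yourself. The paper's key ingredient here, which your plan is missing, is the Hoeffding-type \emph{sub-Gaussian} bound for self-normalized sums (the paper's \lemref{subGauss_sn_sum}, i.e.\ \citet[Theorem~2.16]{victor2009self}): $P(S_n > x(4\sqrt n\|X_1\|_2 + V_n))\le 2e^{-x^2/2}$, which holds assuming only $\bE[X_1]=0$ and $\bE[X_1^2]<\infty$. The paper splits $P(S_n/V_n>x)$ on the event $\{V_n\le \sqrt n\|X_1\|_2/2\}$ — handled by the Chernoff lower-tail bound for $V_n^2$ (your instinct here is right, and it is the $m=1$ case of \lemref{chernoff_lower_tail_bdd_U_stat}) — and its complement, on which $xV_n \ge x(4\sqrt n\|X_1\|_2 + V_n)/9$, so the sub-Gaussian inequality gives $2e^{-x^2/162}$. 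The final move, turning $2e^{-x^2/162}$ into $Ce^{-cx^2}\beta_3/\sqrt n$ precisely because $x$ exceeds $n^{1/6}/\beta_3$, is what makes the bound uniform across the whole range. Your truncation/Bernstein alternatives cannot produce exponential-in-$n$ decay from a third moment alone (as you correctly observe the overshoot term $nP(|X_1|>a)$ is only polynomially small), and your final paragraph's attempted cleanup ("$S_n^2 > (n/\beta_3^2)V_n^2\ge\cdots$") is, as you flag yourself, circular. Without the de~la~Pe\~na--Lai--Shao sub-Gaussian tail, the large-$x$ regime does not close.

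Third, in the reduction from $T_{student}$ to $S_n/V_n$, you write $\{T_{student}\le x\}=\{S_n/V_n\le y(x)\}$ and say it suffices to bound $|P(S_n/V_n\le y)-\Phi(y)|$ and substitute, but this ignores the increment $|\Phi(x a_n(x))-\Phi(x)|$ in the triangle inequality $|P(T_{student}>x)-\barPhi(x)|\le|P(S_n/V_n>xa_n(x))-\barPhi(xa_n(x))|+|\barPhi(xa_n(x))-\barPhi(x)|$. The paper spends a nontrivial case analysis (ranges $[0,n^{1/6}]$, $(n^{1/6},n^{1/2}]$, $(n^{1/2},\infty)$) to control this Gaussian-increment term and show it is also dominated by $C e^{-cx^2}/\sqrt n + e^{-n/4}$. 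This piece of bookkeeping is absent from your plan.
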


% The proof of \thmref{nonunif_BE_sn_sum}  makes use of the fundamental \emph{\cramer-type} moderate deviation result  for self-normalized sums established in  \citet{jing2003self}, and their sub-Gaussian-like tail properties \citep[Theorem 2.16]{selfnorm}. Currently, it is unclear to us if it is possible to similarly strengthen the rate of decay of the nonuniform term in $|x|$  for a general U-statistic of degree $m \geq 2$ in \thmref{main}, due to two major barriers. One is that the current state-of-the-art in \cramer-type moderate deviations for Studentized U-statistics  \citep{shao2016cramer} is only applicable to a fairly restricted class of kernels; refer to \citet[Eqn. $(3.3)$]{shao2016cramer}. The other is that sub-Gaussian tail properties are currently not available for what we call the \emph{self-normalized} U-statistic \citep{lai2011cramer, shao2016cramer}, 
%\begin{equation}\label{sn_ustat}
%T_n^*  \equiv  \frac{\sqrt{n}}{m \sigmahatstar} U_n,
%\end{equation}
% where ${\sigmahatstar}^2 \equiv \frac{n-1}{(n-m)^2} \sum_{i=1}^n q_i^2$, which is related to $T_n$ in a similar way that the self-normalized sum $S_n/V_n$ is related to the t-statistic $T_{student}$ in \eqref{efron_relationship}. 
%We leave these open issues for future investigations. 

\section{\bf Proof of the nonuniform B-E bound for Studentized  U-statistics}\label{sec:mainpf}

This section lays out  the major steps of the proof for  \thmref{main}. It suffices to consider $x \geq 0$ only, or else one can replace the kernel $h$ with $- h$. Moreover, one can further just focus on $x \geq 1$; for the range $0 \leq x < 1$, because  $e^{- c(m)x} \geq e^{- c(m)}$ for any small positive constant $c(m)$, one can always inflate the constant $C(m)$ in \eqref{BE_Tn_one_sample} sufficiently so that \thmref{main} is true by virtue of the uniform bound in \thmref{BE_unif}.  Hence, this section focuses on proving
\begin{multline} \label{simplified_goal}
 |P(T_n \leq x) - \Phi(x)|  \leq \exp\Bigg(-\frac{c_1(m)n \sigma^6 }{ (\bE[|h|^3])^2} \Bigg) +
 \\
C(m)\Bigg\{ \frac{1}{ (1 +x^3)} \Bigg( \frac{ \bE[|h|^3]}{ n^{3/2} \sigma^3} + \frac{\bE[|g|^3]}{\sqrt{n} \sigma^3}\Bigg) +
 \frac{1 }{e^{c_2(m)x} \sqrt{n}} \Bigg(\frac{\|g\|_3^2 \|h\|_3}{\sigma^3} + \frac{  \|h\|_3^2}{\sigma^2 }\Bigg) \Bigg\}
   \text{ for } x \geq 1,
\end{multline}
for some absolute constant $C(m), c_1(m), c_2(m) >0$.

Without loss of generality, we assume
\begin{equation}\label{sigma_equal_1_assumption}
\sigma^2 = 1
\end{equation}
as  one can always replace $h(\cdot)$  and $g(\cdot)$ respectively with $h(\cdot)/\sigma$  and $g(\cdot)/\sigma$ without changing the definition of $T_n$. To prove \eqref{simplified_goal}, we  adopt the framework of 
 \emph{self-normalized} nonlinear   statistics, which amounts to writing $T_n$  as 
\begin{equation} \label{Tsn}
T_n = \frac{W + D_{1}}{(1 + D_{2})^{1/2}},
\end{equation}
where 
\begin{equation} \label{W_def}
W = W_n \equiv \sum_{i=1}^n \xi_i,
\end{equation}
 with $\xi_1, \dots, \xi_n$ being independent random variables such that 
\begin{equation} \label{simpAssumptions}
\bE[{\xi_i}] = 0 \text{ for all } i =1, \dots, n, \quad { and } \quad \sum_{i=1}^n\bE[\xi_i^2] = 1;
\end{equation}
$D_{1}$ and $D_{2}$ are random "remainder" terms that are  negligible when $n$ is large, with the additional property that 
\begin{equation} \label{D2_as_geq_minus1}
D_2 \geq -1 \text{ almost surely}.
\end{equation}
This is accomplished by first letting
\begin{equation} \label{xi_for_u_stat}
\xi_i = \frac{g(X_i)}{\sqrt{n}} \text{ for } i = 1, \dots, n;
\end{equation}
under both assumptions \eqref{zero_mean_kernel_assumption} and \eqref{sigma_equal_1_assumption}, it is seen that the properties in \eqref{simpAssumptions}are satisfied. 
Define
\begin{equation} \label{h_k_def}
\bar{h}_k(x_1 \dots, x_k) = h_k(x_1 \dots, x_k) - \sum_{i=1}^k g(x_i) \text{ for } k = 1, \dots, m,
\end{equation}
where 
\[
h_k(x_1, \dots, x_k) = \bE[h(X_1, \dots, X_m) |X_1 = x_1, \dots, X_k = x_k ];
\] 
in particular, $g(x) = h_1(x)$, $h(x_1, \dots, x_m) = h_m(x_1, \dots, x_m)$, and $\bar{h}_k$ for any $k \in \{1, \dots, m\}$ has the degeneracy property
\begin{equation} \label{degen_prop_bar_h_m}
\bE[\bar{h}_k (X_1, \dots, X_k)|X_i] = 0 \text{ for any } i = 1, \dots, k.
\end{equation}  
For  any $p \geq 1$, an important property of the functions $h_k$ is that 
\begin{equation} \label{Jensen}
\bE\Big[ |h_k|^p\Big] \leq \bE\Big[ |h_{k'}|^p\Big] \text{ for }k \leq k',\footnote{See \citet[Eqn. $(3.10)$]{leungshaounif} for instance.}
\end{equation}
which necessarily implies, for a constant $C(k) > 0$ depending only on $k$,
\begin{equation}\label{Jensen_consequence}
 \bE[|\bar{h}_k|^p] \leq C(k) \bE[|h_k|^p] \leq C(k) \bE[|h|^p] .
\end{equation}
 By the Hoeffding decomposition, for $W$ in \eqref{W_def} constructed with \eqref{xi_for_u_stat}, we have 
 \[
 \frac{\sqrt{n}}{m} U_n = W  +  {n -1\choose m -1}^{-1} \sum_{1 \leq i_1 < \dots < i_m\leq n} \frac{\bar{h}_m(X_{i_1}, \dots, X_{i_m})}{\sqrt{n}}.
 \]
Hence,  the "numerator remainder"  for $T_n$ can be defined as
 \begin{equation} \label{D1}
D_1 =  D_1(X_1, \dots, X_n)\equiv {n -1\choose m -1}^{-1} \sum_{1 \leq i_1 < \dots < i_m\leq n} \frac{\bar{h}_m(X_{i_1}, \dots, X_{i_m})}{\sqrt{n}},
\end{equation}
and the "denominator remainder" can be taken as
\begin{equation} \label{D2_dual_def}
D_2 = D_2(X_1, \dots, X_m)\equiv 
 \sigmahat^2  -1  .
\end{equation}
By further defining 
\begin{multline*}
 \quad V_n^2 = \sum_{i=1}
^n \xi_i^2, \quad 
\Psi_{n, i} =  \sum_{\substack{1 \leq i_1 < \dots < i_{m-1} \leq n\\ i_l \neq i \text{ for } l = 1, \dots, m-1}} \frac{\bar{h}_m(X_i, X_{i_1}, \dots, X_{i_{m-1}}) }{\sqrt{n}}
\quad \text{ and } \quad  \Lambda_n^2 = \sum_{i=1}^n \Psi_{n, i}^2 ,
\end{multline*}
as well as 
\begin{multline}\label{delta_1_star}
\delta_1^* = \delta_{1n}^*
  \equiv \left[ \frac{ n(m-1)^2}{(n-m)^2} + \frac{2(m-1)}{(n-m)}\right] W^2 +  \frac{(n-1)^2}{ {n-1 \choose m-1}^2 (n-m)^2} \Lambda_n^2 + \frac{2(n-1)(m-1) }{(n-m)^2 {n-1 \choose m-1}} \sum_{i = 1}^n  W \Psi_{n, i},
\end{multline}
 one can then also write $D_2$ as
\begin{equation} \label{s_star_re_expr}
D_2 = d_n^2( V_n^2 + \delta_1 + \delta_2) -1 \text{ for }  d_n = \sqrt{\frac{n}{n-1}}, 
\end{equation}
with 
\begin{equation}\label{delta_1}
\delta_1 \equiv
  \delta_1^* -   \frac{(n-1)^2}{(n-m)^2}U_n^2  
\end{equation}

and
\[
\delta_2   \equiv \frac{2 (n-1) }{(n-m)} {n-1 \choose m-1}^{-1} \sum_{i=1}^n \xi_i \Psi_{n,i}; 
\]
see our related work \citet[Section 3]{leungshaounif} for the derivation of \eqref{s_star_re_expr}.% insert footnote
\footnote{Specifically, it was showed that ${\sigmahatstar}^2 = d_n^2 (V_n^2 + \delta_1^* + \delta_2)$; see the self-normalized U-statistic in \eqref{sn_ustat}. One can then  deduce from \eqref{real_studentizer} that  $\sigmahat^2 = d_n^2 (V_n^2 + \delta_1^*  -   \frac{(n-1)^2}{(n-m)^2}U_n^2 + \delta_2)$.} 
We also note that, although defining $ V_n^2 = \sum_{i=1}
^n \xi_i^2$ slightly abuses the definition of $V_n^2$ for the self-normalized sum in  \eqref{Sn_Vn_def}, one can think of $\xi_i$'s as analogous to the real-valued $X_i$'s in the self-normalized sum.

In \citet{leungshaounif},  replacing the more common truncation technique, \emph{variable censoring} is  advocated as the appropriate  device to establish B-E bounds for self-normalized nonlinear statistics under the Stein-method approach.  In this paper, variable censoring is also adopted; in particular, the censored summands 
\begin{equation*} \label{xi_censored}
\xi_{b,i} \equiv \xi_i I(|\xi_i| \leq 1) +  I(\xi_i > 1) -   I(\xi_i <- 1) \text{ for each } i = 1, \dots, n,
\end{equation*}
as well as their sum
\begin{equation*} \label{xi_i_upper_censored}
W_b \equiv \sum_{i=1}^n \xi_{b, i}
\end{equation*}
will also figure in our proof. However, the other two remainder terms $D_1$ and $D_2$ have to be censored in a considerably more delicate manner as described next. First, we shall define a special  positive  constant "$\mathfrak{c}_m$"   via its square:
\begin{equation} \label{frakcm_def}
\mathfrak{c}_m^2   \equiv  \Big( 1 - \frac{m^2}{m^2 +1}\Big) \times \mathfrak{b}_m,
\end{equation}
where $\mathfrak{b}_m$ is a constant  depending also only  on $m$ defined as
\begin{equation} \label{frakbm_def}
\mathfrak{b}_m  \equiv \begin{cases} 
& \frac{1}{2} \text{ if } m = 1 \text{ or } 2; \\
 & \underbrace{ \left(\frac{m}{2m-2}\right) \cdot  \left(\frac{m-1}{2m-3}\right) \cdots\left(\frac{4}{m+2}\right) \cdot\left(\frac{3}{m+1}\right)}_{(m-2) \text{ many terms }}\text{ if }  m \geq 3.
  \end{cases}
  \end{equation}
Later,  it will 
 become clear later why $\mathfrak{c}_m$ is defined  in this specific way; for now, it is enough to know that $\mathfrak{c}_m$ only depends on $m$ and has the property that
\[
0 < \frakcm < 1.
\]
The censored version of the numerator remainder  $D_1$ is defined to be
\begin{equation} \label{D1x_upper_censored}
\bar{D}_{1, x} = D_1 I\left(|D_1| \leq \frac{\frakcm x}{4} \right) +\frac{\frakcm x}{4} I\left(D_1 >\frac{\frakcm x}{4}\right) - \frac{\frakcm x}{4} I\left(D_1 < - \frac{ \frakcm  x}{4}\right).
\end{equation}
For the denominator remainder, replacing certain $\xi_i$'s with $\xi_{b, i}$'s in \eqref{s_star_re_expr} we first define
\[
D_{2, V_b, \delta_1, \delta_{2,b}} =  d_n^2 (V_b^2  + \delta_1 + \delta_{2, b}) -1.
\]
where
 \begin{equation} \label{Vb2_and_delta2b_def}
V^2_b = V^2_{n, b} \equiv \sum_{i =1}^n \xi_{b, i}^2  \text{ and } \delta_{2, b} \equiv \frac{2 (n-1) }{(n-m)} {n-1 \choose m-1}^{-1} \sum_{i=1}^n \xi_{b, i}\Psi_{n,i}
\end{equation}
%\[
%\delta_{2, b} \equiv \frac{2 (n-1) }{(n-m)} {n-1 \choose m-1}^{-1} \sum_{i=1}^n \xi_{b, i}\Psi_{n,i}
%\]
We further censor $\delta_1$ and $\delta_{2, b}$ as
\begin{equation} \label{bardelta1_def}
\bar\delta_{1} =  \delta_1 I(| \delta_1| \leq n^{-1/2}) + n^{-1/2}I( \delta_1 > n^{-1/2}) -  n^{-1/2}I( \delta_1 < -n^{-1/2})
\end{equation}
and
\[
 \bar{\delta}_{2, b} =  \delta_{2, b} I(| \delta_{2, b}| \leq 1) + I( \delta_{2, b} > 1) -  I( \delta_{2, b} < -1), 
\]
and define
\begin{equation} \label{DR_censored_def}
D_{2, V_b, \bardelta_1, \bardelta_{2,b}}  =  d_n^2( V_b^2 + \bar{\delta}_1  + \bar{\delta}_{2, b}) - 1.
\end{equation}
Finally, we  censor $D_{2, V_b, \bardelta_1, \bardelta_{2,b}}$ as 
\begin{multline} \label{barDR_censored_def}
\barD_{2, V_b, \bardelta_1, \bardelta_{2,b}}\equiv D_{2, V_b, \bardelta_1, \bardelta_{2,b}}  I\Bigg(\frac{9\frakc_m^2}{16}- 1 \leq D_{2, V_b, \bardelta_1, \bardelta_{2,b}}   \leq 1 \Bigg)\\
 +I\Bigg(D_{2, V_b, \bardelta_1, \bardelta_{2,b}} >1\Bigg) + \Big(\frac{9\frakc_m^2}{16}- 1 \Big)I\Bigg(D_{2, V_b, \bardelta_1, \bardelta_{2,b}}   < \frac{9\frakcm^2}{16}- 1\Bigg).
\end{multline}
% While the pieces of notation $D_{2, V_b, \bardelta_1, \bardelta_{2,b}}$ and $\bar{D}_{2, V_b, \bardelta_1, \bardelta_{2,b}}$ accurately describe their own meanings, they can get cumbersome sometimes; therefore, for the most part, we will use the shorthands
% \begin{equation} \label{important_abbrev}
%\DRbar \equiv D_{2, V_b, \bardelta_1, \bardelta_{2,b}} \quad \text{ and } \quad \barDRbar \equiv \bar{D}_{2, V_b, \bardelta_1, \bardelta_{2,b}},
% \end{equation}
% when the specifics of the internal censoring construction  do not matter. 
 
With these censoring constructions, now we start to prove \eqref{simplified_goal}: First rewrite
\[
P(T_n > x) = P(W + D_1 > x(1 + D_2)^{1/2} )
\]
and 
define the   events:
\begin{align*}
\cE_1 &\equiv  \left\{W + D_1 > x(1 + D_2)^{1/2}  , \;\  |D_1|> \frac{\frakcm x}{4} \right\}   \bigcup \left\{W + D_1 > x(1 + D_2)^{1/2}  , \;\  D_2 < \frac{9\frakc_m^2}{16}- 1\right\} ;   \\
\cE_2&\equiv  \left\{W + \barD_{1,x} > x\Bigg(1 + \max\bigg( \frac{9\frakc_m^2}{16}- 1, D_2\bigg)\Bigg)^{1/2} , \;\max_{1 \leq i \leq n} |\xi_i|  > 1 \right\} ;\\
\cE_3&\equiv \left\{ W_b + \barD_{1,x} > x\Bigg(1 + \max\bigg( \frac{9\frakc_m^2}{16}- 1 , D_{2, V_b, \delta_1, \delta_{2, b}}\bigg)\Bigg)^{1/2} , \;\   |\delta_1|> \frac{1}{\sqrt{n}}\right\} \\
 &\hspace{2cm} \bigcup \left\{ W_b + \barD_{1,x} > x\Bigg(1 + \max\bigg(\frac{9\frakc_m^2}{16}- 1, D_{2, V_b, \delta_1, \delta_{2, b}}\bigg)\Bigg)^{1/2} , \;\   |\delta_{2, b}|> 1\right\} ; \\
\cE_4&\equiv \left\{ W_b + \barD_{1,x} > x\Bigg(1 + \max\bigg(\frac{9\frakc_m^2}{16}- 1, D_{2, V_b, \bardelta_1, \bardelta_{2, b}}\bigg)\Bigg)^{1/2}  , \;\   |D_{2, V_b, \bardelta_1, \bardelta_{2,b}}| > 1\right\}.
 \end{align*}
The following sequence of  inclusions are then seen to hold by progressively using 
\[
\tilde{\cE}_\ell \equiv \cup_{i=1}^{\ell} \cE_i , \quad \ell = 1, \dots, 4,
\]
 as covering events:
 \[
 \{ T_n > x\} \backslash \cE_1
 \subset \Bigg\{ P\Big(W + \barD_{1, x} > x\Big(1 + \max\Big(\frac{9\frakc_m^2}{16}- 1,  D_2\Big)\Big)^{1/2} \Big) \Bigg\} 
 \subset  \{ T_n > x \} \cup \cE_1
\]
\[
\downarrow
\]
 \[
 \{ T_n > x\} \backslash  \tilde{\cE}_2
 \subset \Bigg\{ P\Big(W_b + \barD_{1, x} > x\Big(1 + \max\Big( \frac{9\frakc_m^2}{16}- 1, D_{2, V_b, \delta_1, \delta_{2,b}}\Big)\Big)^{1/2} \Big) \Bigg\} 
 \subset  \{ T_n > x \} \cup   \tilde{\cE}_2
\]
\[
\downarrow
\]
 \[
 \{ T_n > x\} \backslash\tilde{\cE}_3
 \subset \Bigg\{ P\Big(W_b + \barD_{1, x} > x\Big(1 + \max\Big(\frac{9\frakc_m^2}{16}- 1, D_{2, V_b, \bardelta_1, \bardelta_{2,b}}\Big)\Big)^{1/2} \Big) \Bigg\} 
 \subset  \{ T_n > x \} \cup \tilde{\cE}_3
\]
\[
\downarrow
\]
 \begin{equation} \label{last_event_inclusion}
 \{ T_n > x\} \backslash \tilde{\cE}_4
 \subset \Bigg\{ P\Big(W_b + \barD_{1, x} > x\Big(1 +  \barD_{2, V_b, \bardelta_1, \bardelta_{2,b}} \Big)^{1/2} \Big) \Bigg\} 
 \subset  \{ T_n > x \} \cup \tilde{\cE}_4.
\end{equation}
The  last event inclusion \eqref{last_event_inclusion} implies the inequality
\begin{multline} \label{distill_ineq}
\Big| P(T_n > x) - \barPhi(x)\Big| \leq \\
 R_x 
  +  P\left(D_2 < \frac{9\frakc_m^2}{16}- 1\right) + 
\Big| P\Big(W_b + \barD_{1, x} > x\Big(1 +  \barD_{2, V_b, \bardelta_1, \bardelta_{2,b}}  \Big)^{1/2} \Big)- \barPhi(x)\Big|
 ,
\end{multline}
 where 
 \[
 R_x \equiv P\Bigg( W + D_1 > x(1 + D_2)^{1/2}  , \;\  |D_1|> \frac{\frakcm x}{4}  \Bigg) + \sum_{i =2}^4P(\cE_i) 
 \]
 is the sum of the probabilities of all covering events except 
 \[
 \Big\{ W + D_1 > x(1 + D_2)^{1/2}  , \;\  D_2 < \frac{9\frakc_m^2}{16}- 1  \Big\}
 \]
 whose probability can be bounded by $P(D_2 < \frac{9\frakc_m^2}{16}- 1)$. Hence, the proof boils down to proving bounds for the three terms on the right hand side of \eqref{distill_ineq}. We first state the bound for the  "$x$-dependent" term $R_x$.
 In light of the inequality:
\[
x \Bigg(1+ \Bigg(\frac{9 \frakcm^2}{16} - 1\Bigg) \Bigg)^{1/2}  - \bar{D}_{1, x}\geq x\Big(\frac{9\frakcm^2}{16}\Big)^{1/2} - \frac{\frakcm x}{4} =\frac{ \frakcm x}{2}
\]
which is true by the definition of $\barD_{1,x}$,
it can be seen that 
\begin{multline} \label{Rx_bdd}
R_x \leq 
P\left(|D_1| > \frac{\frakcm x}{4}  \right)  + 
P\left( W  \geq \frac{\frakcm x}{2}, \max_{1 \leq i \leq n} |\xi_i| >1\right) + 
P\left( W_b  \geq  \frac{\frakcm x}{2}, |\delta_1| > \frac{1}{\sqrt{n}}\right)   \\
 +
  P\left( W_b  \geq  \frac{\frakcm x}{2}, |\delta_{2, b} |> 1\right)   + 
 P\left( W_b  \geq  \frac{\frakcm x}{2}, | D_{2, V_b, \bardelta_1, \bardelta_{2,b}}  | > 1\right),
\end{multline}
leading us to the following bound:

\begin{lemma}[Nonuniform bound for $R_x$] \label{lem:bdd_for_remnants}
For $x\geq 1$, assuming  \eqref{zero_mean_kernel_assumption} and \eqref{sigma_equal_1_assumption}, we have the following bounds of rate no larger than $1/\sqrt{n}$:
\begin{enumerate} [label=(\roman*)]
%new item
\item $P\left(|D_1| > \frac{\frakcm x}{4}  \right)  
\leq \frac{C(m) \bE[|h|^3]}{\frakcm^3 n^{3/2}(1 +x^3)} $;
%new item
\item $P\left( W  \geq \frac{\frakcm x}{2}, \max_{1 \leq i \leq n} |\xi_i| >1\right)   
\leq  \frac{ C \bE[|g|^3]}{\frakcm^3(1 + x^3)\sqrt{n}}$;
%new item
\item $P\left( W_b  \geq  \frac{\frakcm x}{2}, |\delta_1| > \frac{1}{\sqrt{n}}\right)
\leq  C(m) e^{-\frakcm x/2} \frac{\|h\|_3^2}{\sqrt{n}}$;
% new item
 \item $P\left( W_b  \geq  \frac{\frakcm x}{2}, |\delta_{2, b} |> 1\right)
   \leq C(m) e^{-\frakcm x/2} \frac{\|g\|_3 \|h\|_3}{\sqrt{n}} $;
% new item
\item $
 P\left( W_b  \geq  \frac{\frakcm x}{2}, | D_{2, V_b, \bardelta_1, \bardelta_{2,b}}  | > 1 \right)  \leq
  Ce^{-\frakcm x/2} \Big( \frac{\bE[|g|^3] +   \|g\|_3 \|h\|_3}{\sqrt{n}} \Big)
  $.
\end{enumerate}
In particular, via \eqref{Rx_bdd} these bounds together imply
\begin{equation} \label{Rx_final_bdd}
R_x \leq  \frac{C_1(m)}{ (1 +x^3)} \Bigg( \frac{ \bE[|h|^3]}{n^{3/2}} + \frac{\bE[|g|^3]}{\sqrt{n}} \Bigg) + \frac{C_2(m)}{ e^{\frakcm x/2}}\Bigg( \frac{ \|h\|_3^2}{\sqrt{n}} \Bigg),
\end{equation}
 for some absolute constants $C_1(m), C_2(m) >0$.
\end{lemma}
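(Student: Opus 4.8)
First, I would split the five estimates by the two devices that produce them and then assemble \eqref{Rx_final_bdd} via the union bound \eqref{Rx_bdd}. The $\tfrac1{1+x^3}$-type bounds (i)--(ii) will come from Markov's inequality used at the \emph{third}-moment level, which manufactures both the $n^{-1/2}$- (resp. $n^{-3/2}$-) rate and the cubic decay in $x$. The $e^{-\frakcm x/2}$-type bounds (iii)--(v) will come from trading the event $\{W_b\ge\frakcm x/2\}$ for exponential decay in $x$: because $W_b=\sum_i\xi_{b,i}$ is a sum of independent summands with $|\xi_{b,i}|\le1$ and $\sum_i\bE[\xi_{b,i}^2]\le1$, a Bernstein/Chernoff estimate bounds its upper tail and, in the ``regular'' parameter regime, also gives $\bE[e^{W_b}]\le C$; the constraint on the second-order remainder is what supplies the $n^{-1/2}$-rate factor.

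For (i): when $m=1$ one has $\bar h_1\equiv0$, so $D_1=0$ and there is nothing to prove; for $m\ge2$, \eqref{D1} shows that $D_1$ is, up to the scalar $\sqrt n/m$, a U-statistic with the completely degenerate kernel $\bar h_m$ (degeneracy \eqref{degen_prop_bar_h_m}), so a moment inequality for completely degenerate U-statistics applied at the third-moment level, combined with \eqref{Jensen_consequence}, gives $\|D_1\|_3\le C(m)\,n^{-1/2}\|h\|_3$; Markov at level $3$ and $1+x^3\asymp x^3$ for $x\ge1$ then yield (i). For (ii): union-bound over an index $i^*$ with $|\xi_{i^*}|>1$ and condition on $\xi_{i^*}$, which is independent of $W^{(i^*)}:=W-\xi_{i^*}$; split according to whether $\xi_{i^*}>\frakcm x/4$ --- handled by Markov applied to $\xi_{i^*}=g(X_{i^*})/\sqrt n$ alone --- or $1<|\xi_{i^*}|\le\frakcm x/4$, in which case $W^{(i^*)}\ge\frakcm x/4$ and Markov at level $3$ for $W^{(i^*)}$ applies, with $\bE|W^{(i^*)}|^3$ bounded by Rosenthal's inequality; summing the $n$ contributions and using $P(|\xi_{i^*}|>1)\le\bE[|g|^3]/n^{3/2}$ gives (ii). Both of these need a short separate treatment of the ``unusual'' regime in which $\bE[|g|^3]/\sqrt n$ is not bounded (equivalently $\|h\|_3$ is large compared with $n^{1/6}$), where one instead applies Markov directly to $W$.

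For (iii)--(v): let $\Delta$ denote the relevant second-order remainder --- $\delta_1$ for (iii), $\delta_{2,b}$ for (iv), and the censored denominator remainder $D_{2,V_b,\bardelta_1,\bardelta_{2,b}}$ of \eqref{DR_censored_def} for (v) --- with threshold $s$ ($n^{-1/2}$, $1$, $1$ respectively). Using $\mathbf 1\{|\Delta|>s\}\le s^{-1}|\Delta|$ --- the first power rather than $s^{-2}\Delta^2$ for $\delta_1$, whose $W^2$-term would make $\bE[e^{W_b}\delta_1^2]$ possibly infinite under only a third moment, while the second power is available and useful for the bounded-summand pieces in (v) --- together with $e^{W_b}\ge e^{\frakcm x/2}$ on $\{W_b\ge\frakcm x/2\}$,
\[
P\!\left(W_b\ge\tfrac{\frakcm x}{2},\ |\Delta|>s\right)\ \le\ e^{-\frakcm x/2}\,\bE\!\left[e^{W_b}\,s^{-1}|\Delta|\right],
\]
so it remains to estimate $\bE[e^{W_b}|\Delta|]$. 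I would expand $|\Delta|$ into its constituent pieces --- for $\delta_1$ the $W^2$-, $\Lambda_n^2$-, $WD_1$- and $D_1^2$-terms with the explicit coefficients of \eqref{delta_1_star}--\eqref{delta_1}, and analogously for $\delta_{2,b}$ and \eqref{DR_censored_def} (whose tail further reduces, in the regular regime, to deviations of the bounded-summand sum-of-squares $V_b^2$ and of $\delta_{2,b}$) --- and estimate each $\bE[e^{W_b}\times(\text{piece})]$ through the product form $e^{W_b}=\prod_i e^{\xi_{b,i}}$. The mechanism is: at every coordinate $j$ where the completely degenerate $\bar h_m$ occurs ``off the diagonal'' inside a product, so that $X_j$ can be integrated out, replace $e^{\xi_{b,j}}$ by $e^{\xi_{b,j}}-\bE[e^{\xi_{b,j}}]$ --- legitimate by \eqref{degen_prop_bar_h_m} --- and gain a factor of order $n^{-1/2}$ from $\var(e^{\xi_{b,j}})\le C\,\bE[\xi_{b,j}^2]\le C/n$; this offsets the combinatorial volume and keeps every such expectation comparable to its ``diagonal'' part. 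Together with the elementary coefficient/variance estimates for $W^2,\Lambda_n^2,D_1,\delta_{2,b}$ (the $\Lambda_n^2$- and $D_1$-type pieces being built from the degenerate $\bar h_m$, hence of high order in $1/n$) and \eqref{Jensen}--\eqref{Jensen_consequence}, this yields $\bE[e^{W_b}|\delta_1|]\le C(m)n^{-1}\|h\|_3^2$, $\bE[e^{W_b}|\delta_{2,b}|]\le C(m)n^{-1/2}\|g\|_3\|h\|_3$, and the analogous bound for (v), each matching the claim after multiplication by $s^{-1}$. Here again the ``unusual'' regime, in which $\bE[e^{W_b}]$ ceases to be $O(1)$, must be excised and handled directly.

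Finally, \eqref{Rx_final_bdd} follows by substituting (i)--(v) into \eqref{Rx_bdd}: (i) and (ii) give the first term, while for the second one uses $e^{-\frakcm x/2}\le C(m)(1+x^3)^{-1}$ to absorb the $\bE[|g|^3]/\sqrt n$-piece of (v) into the first term, and $\|g\|_3\|h\|_3\le\|h\|_3^2$ (by \eqref{Jensen}) to collapse the remaining $e^{-\frakcm x/2}$-pieces of (iii)--(v) into $C_2(m)\|h\|_3^2/\sqrt n$. The main obstacle I anticipate is the term-by-term bookkeeping in (iii)--(v) --- keeping all the off-diagonal contributions to $\bE[e^{W_b}\times(\text{piece})]$ under control with only a third moment of $h$ available, which is exactly the ``considerably more delicate censoring'' the introduction alludes to --- compounded by the need to isolate and separately treat the data regime in which the exponential moments of $W_b$ are no longer $O(1)$. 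By comparison (i) and (ii) should be comparatively routine.
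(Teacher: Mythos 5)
Your proof for (i) is the same as the paper's: identify $D_1$ as a completely degenerate U-statistic of rank $2$, bound $\bE[|D_1|^3]$ via a moment inequality for such U-statistics (here \lemref{non_integral_moment_bdd_u_stat}), then apply Markov at the third-moment level.

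Your (ii) has the same skeleton as the paper's (union over $i$, split $\{W\ge\frakcm x/2,\ |\xi_i|>1\}$ into $\{\xi_i\ge\frakcm x/4\}$ and $\{W^{(i)}\ge\frakcm x/4,\ |\xi_i|>1\}$, use independence), but the second half is heavier than it needs to be. You propose Rosenthal $+$ Markov for $\bE[|W^{(i)}|^3]$, which manufactures a spurious $(\sum_j\bE[|\xi_j|^3])^2$ term and forces the ad hoc regime split you describe. The paper sidesteps this entirely by invoking a moment-free tail estimate for $P(W^{(i)}\ge\frakcm x/4)$, namely \citet[Lemma 8.2]{chen2011normal} with $p=3/2$, which reads directly as $P(\max_i|\xi_i|>\frakcm x/6)+e^{3/2}(1+(\frakcm x)^2/24)^{-3/2}$ with no moment ratio and no case analysis. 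Your route is salvageable but clunkier.

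Your treatment of (iii) is where a genuine gap opens up, and is the main thing to flag. For $\delta_1$ you use $I(|\delta_1|>n^{-1/2})\le n^{1/2}|\delta_1|$ and then propose to bound $\bE[e^{W_b}|\delta_1|]$ by expanding $|\delta_1|$ into pieces, factoring $e^{W_b}=\prod_k e^{\xi_{b,k}}$, and extracting gains from the degeneracy of $\bar h_m$ off the diagonal by re-centring $e^{\xi_{b,j}}$. This is not what the paper does, and as sketched it is unlikely to close under a third-moment-only hypothesis: each centring $+$ Cauchy--Schwarz step you invoke trades a factor $n^{-1/2}$ for an escalation in the order of $\bar h_m$ appearing in the conditional $L_2$-norms, and for the $\Lambda_n^2$- and cross-term pieces this pushes past third moments. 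The key device in the paper that your proposal is missing is H\"older with exponents $(3,3/2)$:
\[
P\!\left(W_b\ge \frac{\frakcm x}{2},\ |\delta_1|>n^{-1/2}\right)\ \le\ e^{-\frakcm x/2}\,\|e^{W_b}\|_3\,\big\|I(|\delta_1|>n^{-1/2})\big\|_{3/2}\ \le\ C\,e^{-\frakcm x/2}\,\big\|n^{1/2}\delta_1\big\|_{3/2},
\]
followed by the pre-established bound $\bE[|\delta_1|^{3/2}]\le C(m)\bE[|h|^3]/n^{3/2}$ (\lemref{delta1_moment_bdd}), which is exactly what keeps everything at the third-moment level with no coordinate-by-coordinate bookkeeping. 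The same pattern, not your expansion, governs (iv) (first power plus Cauchy--Schwarz against $\|e^{W_b}\|_2$, then the $L_2$-bound of \lemref{delta2b_property}) and (v) (second power, expand into $\delta_{0,b}^2$, $\bardelta_1^2$, $\bardelta_{2,b}^2$ using $|\bardelta_1|\vee|\bardelta_{2,b}|\le1$, then the pre-established weighted bounds of \lemsref{delta_0_property} and \lemssref{delta2b_property}).

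Two further remarks. First, your repeated concern about an ``unusual regime in which $\bE[e^{W_b}]$ ceases to be $O(1)$'' is unfounded: $W_b$ is a sum of independent summands bounded by $1$ in modulus with aggregate second moment at most $1$, so the censored Bennett inequality (\lemref{bennett_censored}) gives $\bE[e^{tW_b}]\le\exp(e^{2t}/4-1/4+t/2)$, an absolute constant for each fixed $t$, irrespective of the distribution of $g(X_1)$. The paper never splits regimes in (iii)--(v), and neither should you. Second, your final assembly of \eqref{Rx_final_bdd} --- absorbing the $e^{-\frakcm x/2}\bE[|g|^3]/\sqrt n$ piece of (v) into the $(1+x^3)^{-1}$ term and using $\|g\|_3\|h\|_3\le\|h\|_3^2$ to collapse the remaining exponential pieces --- is exactly what the paper does.
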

 The proof of \lemref{bdd_for_remnants} in \appref{remnant_pf} follows fairly standard arguments, and we note that 
the proofs for $(iii)$-$(v)$ repeatedly use the Chernoff-type bound
$
I\Big(W_b \geq \frac{ \frakcm x}{2}\Big) \leq e^{W_b - \frac{\frakcm x}{2}}
$
to result in the exponentially nonuniform terms in $x$. Next, we bound the other $x$-dependent term $| P(W_b + \barD_{1, x} > x (1 +   \barD_{2, V_b, \bardelta_1, \bardelta_{2,b}}   )^{1/2} )- \barPhi(x)|$ from  \eqref{distill_ineq} in  \lemref{intermediate} below. Its proof in \appref{Stein_pf}  involves Stein's method, and is  largely similar to the proof of the uniform B-E bounds in   \citet{leungshaounif}, except that the properties of the solution to the Stein equation is more thoroughly exploited,  to ensure the nonuniformity in $x$ of the bound.

\begin{lemma}[Intermediate nonuniform bound by Stein's method] \label{lem:intermediate} For $x \geq 1$, there exist absolute constants $C(m) ,c(m) >0$ depending only on $m$ such that
\begin{equation} \label{intermediate_nonunif_bdd}
\bigg| P\bigg(W_b + \barD_{1, x} > x \big(1 +   \barD_{2, V_b, \bardelta_1, \bardelta_{2,b}}   \big)^{1/2} \bigg)- \barPhi(x)\bigg| \leq \frac{C(m) }{e^{c(m)x}} \Bigg( \frac{\bE[|g|^3] + \|g\|_3^2 \|h\|_3}{\sqrt{n}}\Bigg).
\end{equation}
\end{lemma}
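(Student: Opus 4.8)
The plan is to reduce the probability in \eqref{intermediate_nonunif_bdd} to a normal comparison for a single perturbed random walk and then run Stein's method, exploiting the \emph{decay} of the nonuniform Stein solution to produce the factor $e^{-c(m)x}$. First I would set $A \equiv \barD_{2, V_b, \bardelta_1, \bardelta_{2, b}}$, $\Delta \equiv \barD_{1,x} - x\big((1+A)^{1/2}-1\big)$ and $\widetilde W \equiv W_b + \Delta$, so that $\{W_b + \barD_{1,x} > x(1+A)^{1/2}\} = \{\widetilde W > x\}$ and hence $P(\widetilde W > x) - \barPhi(x) = -\big(P(\widetilde W \le x)-\Phi(x)\big)$; it thus suffices to bound $|P(\widetilde W\le x)-\Phi(x)|$. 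Two structural facts will be used throughout. \emph{Localization}: since the censoring in \eqref{barDR_censored_def} forces $A \ge \tfrac{9\frakcm^2}{16}-1$, we have $(1+A)^{1/2}\ge \tfrac34\frakcm$, while $|\barD_{1,x}|\le \tfrac14\frakcm x$ by \eqref{D1x_upper_censored}; hence on $\{\widetilde W>x\}$ one has $W_b > x(1+A)^{1/2}-\barD_{1,x}\ge \tfrac12\frakcm x$, i.e.\ $\{\widetilde W>x\}\subset\{W_b\ge \tfrac12\frakcm x\}$. \emph{Chernoff}: because $|\xi_{b,i}|\le 1$ and $\sum_i\bE[\xi_{b,i}^2]\le\sum_i\bE[\xi_i^2]=1$, one has $\bE[e^{W_b}]=\prod_i\bE[e^{\xi_{b,i}}]\le C$, so $P(W_b\ge t)\le Ce^{-t}$ for $t\ge 0$, and the same holds for the leave-one-out versions of $W_b$ used below.

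Next I would invoke Stein's method: with $f=f_x$ the bounded solution of $f'(w)-wf(w)=I(w\le x)-\Phi(x)$,
\[
P(\widetilde W\le x)-\Phi(x)=\bE[f'(\widetilde W)]-\bE[\widetilde W f(\widetilde W)].
\]
Besides the uniform estimates $0<f_x(w)\le \min(\sqrt{2\pi}/4,\,1/|w|)$, $|f_x'(w)|\le 1$, $|wf_x(w)|\le 1$, I would use the nonuniform estimates that $f_x$, $f_x'$ and $w\mapsto wf_x(w)$ are all $O(e^{-x^2/8})$ on $\{w\le x/2\}$ and $O(1/x)$ everywhere (standard nonuniform Stein-solution bounds, as in \citet{chen2007normal}). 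The mechanism for the nonuniformity is then uniform throughout the argument: in any expectation consisting of a small remainder multiplied by one of $f_x(\widetilde W)$, $f_x'(\widetilde W)$, $\widetilde W f_x(\widetilde W)$ (or their leave-one-out analogues), split according to whether the relevant partial sum $W_b$ (resp.\ its leave-one-out version) is below $\tfrac14\frakcm x$ or not: on the first event the censored remainders $\barD_{1,x},\bardelta_1,\bardelta_{2,b},A$ keep $\widetilde W$ at most $x/2$, so the Stein-solution factor contributes super-exponential smallness, while on its complement the Chernoff bound supplies $e^{-\frakcm x/4}$.

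I would then carry out the leave-one-out (Hoeffding-type) decomposition as in \citet[Section~3]{leungshaounif}: write $\bE[\widetilde W f(\widetilde W)]=\sum_{i=1}^n \bE[\xi_{b,i}f(\widetilde W)]+\bE[\Delta f(\widetilde W)]$, replace $\widetilde W$ inside $\bE[\xi_{b,i}f(\widetilde W)]$ by a copy $\widetilde W^{(i)}$ with the influence of $X_i$ removed, Taylor-expand $f(\widetilde W)-f(\widetilde W^{(i)})$, and use $\sum_i\bE[\xi_{b,i}(\widetilde W-\widetilde W^{(i)})]\approx\sum_i\bE[\xi_{b,i}^2]=1$ so that the leading term absorbs $\bE[f'(\widetilde W)]$. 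What remains is a sum of explicit remainders: the censoring biases $\sum_i\bE[\xi_{b,i}]\,\bE[f(\widetilde W^{(i)})]$, for which $\sum_i|\bE[\xi_{b,i}]|\le \sum_i\bE[|\xi_i|^3]=\bE[|g|^3]/\sqrt n$ since $|\operatorname{sign}(t)-t|\le|t|^3$ for $|t|\ge1$; higher-order Taylor terms of the shape $\sum_i\bE[\xi_{b,i}(\widetilde W-\widetilde W^{(i)})^2(\cdots)]$, handled with a concentration bound for $\widetilde W$; and $\bE[\Delta f(\widetilde W)]$, which through $\barD_{1,x},\bardelta_1,\bardelta_{2,b},A$ brings in $\bar{h}_m$, hence $h$, via \eqref{Jensen_consequence}. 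Applying the bulk/tail split to each remainder together with H\"older and the third-moment bounds $\bE[|\xi_i|^3]=\bE[|g|^3]/n^{3/2}$, $\bE[|\bar{h}_k|^3]\le C(k)\bE[|h|^3]$, and the crude a priori bounds on $\bardelta_1,\bardelta_{2,b},A$ from their censoring levels, yields a bound of the advertised form $\tfrac{C(m)}{e^{c(m)x}}\cdot\tfrac{\bE[|g|^3]+\|g\|_3^2\|h\|_3}{\sqrt n}$: the $\bE[|g|^3]$ piece is the classical Berry--Esseen contribution of the sum $W$, and the $\|g\|_3^2\|h\|_3$ piece carries the kernel-dependent remainders, with one $\|h\|_3$ from a factor of $\bar{h}_m$ and $\|g\|_3^2$ from the two $\xi$'s accompanying it in $\delta_{2,b}$.

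The hard part will be the $x$-dependent perturbation $\Delta=\barD_{1,x}-x\big((1+A)^{1/2}-1\big)$: it is of size $O(x)$ in the worst case, so it cannot be absorbed as a negligible correction, and yet the final bound must still decay like $e^{-c(m)x}$. The resolution is the bulk/tail dichotomy above, but it must be made to work \emph{simultaneously} for every error term --- in particular those carrying $h$ through $D_1,\delta_1,\delta_{2,b}$ --- while keeping the various censoring thresholds ($\tfrac14\frakcm x$ for $\barD_{1,x}$, $\tfrac{9\frakcm^2}{16}-1$ for $A$, $n^{-1/2}$ for $\bardelta_1$, and $1$ for $\bardelta_{2,b}$) mutually consistent, so that wherever $f_x$ and $f_x'$ fail to be super-exponentially small --- i.e.\ when $\widetilde W$ lies within $O(1)$ of $x$ --- the localization $\{W_b\ge \tfrac12\frakcm x\}$ reliably supplies the Chernoff decay. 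This thorough exploitation of the Stein solution's decay is precisely the ingredient absent from the uniform argument of \citet{leungshaounif}.
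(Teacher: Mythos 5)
Your plan captures the correct high-level strategy --- localize to $\{W_b \gtrsim \frakcm x\}$ via the censoring thresholds, invoke the exponential decay of the Stein solution off the bulk, and close with a leave-one-out (Hoeffding-type) Stein computation --- but it omits two structural devices that the paper's proof actually relies on, and both omissions create genuine gaps.

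First, you work directly with $A=\barD_{2,V_b,\bardelta_1,\bardelta_{2,b}}$, which carries the censored $\bardelta_1$ inside the square root. The paper instead \emph{sandwiches} $\bardelta_1$ between its endpoints $\pm n^{-1/2}$ (see \eqref{sandwich} and the definition of $\DRplaceholder$ in \eqref{placeholder_D2_def}), which replaces $\bardelta_1$ by a deterministic constant before the Stein argument begins. This is not cosmetic: your leave-one-out step needs $\|\Delta-\Delta^{(i)}\|$ to have order $1/n$, which for $\bardelta_1$ means $\|\bardelta_1-\bardelta_1^{(i)}\|_{3/2}\lesssim 1/n$, but $\delta_1$ contains $W^2$, $\sum_i W\Psi_{n,i}$ and $\Lambda_n^2$, and the trivial censoring bound only gives $|\bardelta_1-\bardelta_1^{(i)}|\le 2n^{-1/2}$, a factor of $\sqrt{n}$ too large. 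You would need a dedicated argument to bound $\|\delta_1-\delta_1^{(i)}\|_{3/2}$, which your proposal neither states nor proves, and which the paper deliberately avoids. After the sandwich, the placeholder $\frakD_2$ satisfies the clean identity $\frakD_2-\frakD_2^{(i)}=d_n^2(\xi_{b,i}^2+\bardelta_{2,b}-\bardelta_{2,b}^{(i)})$ with no $\bardelta_1$ term, and the required moment bound \eqref{D2minusD2i_l3over2_norm_bdd} follows from the already-established estimates on $\delta_{2,b}$.

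Second, you take $\Delta=\barD_{1,x}-x\big((1+A)^{1/2}-1\big)$ and try to run Stein on $\widetilde W=W_b+\Delta$ in one pass. The paper instead applies $1+s/2-s^2/2\le(1+s)^{1/2}\le 1+s/2$ to split the target probability into two quantities of entirely different character (see \eqref{two_quantities_to_bdd}): a \emph{concentration} term $P\big(x(1+\barDRplaceholder)^{1/2}<W_b+\barNR\le x(1+\barDRplaceholder/2)\big)$, handled by the exponential randomized concentration inequality (\lemref{modified_RCI_bdd}), and a \emph{linearized} Berry--Esseen term with $\Delta=\barD_{1,x}-\tfrac{x}{2}\barDRplaceholder$, handled by Stein. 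The concentration term is the one that genuinely captures the nonlinearity of the square root, and it does not reduce to a Stein identity; your single-pass plan has no mechanism to absorb it. Your nonuniform Stein-solution bounds are fine (you even overstate the decay as $e^{-x^2/8}$ when only $e^{-cx}$ is available and needed, since the Chernoff bound caps the decay at exponential), and the Chernoff/localization observations are correct, but without the sandwich and the linearization split the middle of the argument does not close.
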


To summarize \lemsref{bdd_for_remnants} and \lemssref{intermediate} in a nutshell: The delicate internal/external censoring operations applied to the terms $W$, $D_1$ and $D_2$ allow for  a desired nonuniform bound of rate $1/\sqrt{n} $ to be established for 
$
| P(W_b + \barD_{1, x} > x (1 +   \barD_{2, V_b, \bardelta_1, \bardelta_{2,b}}   )^{1/2} )- \barPhi(x)|
$
under minimal moment conditions, 
 while ensuring that a bound depending on $x$ can be established for $R_x$ by way of the inequality in \eqref{Rx_bdd}. We also remark that the crude censoring techniques in \citet{leungshaounif} are insufficient to prove a nonuniform bound since they would have severed the dependence on $x$.

With \eqref{distill_ineq}, \eqref{Rx_final_bdd} and \eqref{intermediate_nonunif_bdd}, to finish proving \eqref{simplified_goal} under \eqref{sigma_equal_1_assumption},
it remains to show, for a small  constant $c(m) >0$ depending only on $m$,  the exponential lower bound
\begin{equation} \label{exponential_lower_bdd_D2}
P\Big(D_2 < \frac{9\frakcm^2}{16}- 1 \Big) = P\Big(1+D_2  < \frac{9\frakcm^2}{16} \Big) \leq \exp\Bigg(-\frac{c(m)n }{ (\bE[|h|^3])^2} \Bigg)
\end{equation}
for $D_2$ in \eqref{s_star_re_expr}. 
The key observation is that, overall,  $\sigmahat^2$ can  be  understood as a  U-statistic constructed with  a \emph{non-negative} kernel.
 First,  write
\begin{align*}
\sum_{i=1}^n q_i^2
&= \sum_{i =1}^n \left( {n-1 \choose m-1}^{-1} \sum_{\substack{1 \leq i_1 < \cdots < i_{m-1}\leq n\\ i_l \neq i \text{ for } l = 1, \dots, m-1}}h(X_i, X_{i_1}, \dots, X_{i_{m-1}})\right)^2\\
&=  {n-1 \choose m-1}^{-2} \sum_{i = 1}^n 
\sum_{ \substack{1 \leq i_1 < \cdots < i_{m-1}\leq n\\ 1 \leq j_1 < \cdots < j_{m-1}\leq n \\ i_l ,j_l\neq i \text{ for } l = 1, \dots, m-1}}  h(X_i, X_{i_1}, \dots, X_{i_{m-1}})h(X_i, X_{j_1}, \dots, X_{j_{m-1}})   \\
&= {n-1 \choose m-1}^{-2} \left( \sum_{k = m}^{2m-1} \sum_{1 \leq i_1 < \dots < i_k\leq n}  \tilde{\cH}_k ( X_{i_1}, \dots, X_{i_k})\right)\\
& = {n-1 \choose m-1}^{-2} \sum_{1 \leq i_1 < \dots < i_{2m} \leq n} \tilde{\frakh}(X_{i_1},\dots, X_{i_{2m}}),
\end{align*}
where $\tilde{\cH}_k :\mathbb{R}^k \longrightarrow \mathbb{R}$  is a symmetric kernel of degree $k$ induced by $h(\cdot)$ defined as
\begin{multline} \label{tilde_cH_k}
\tilde{\cH}_k (x_1, \dots, x_k) \equiv  (2m  - k) \times \sum_{\substack{\cS_1, \cS_2, \cS_3 \subset \{1, \dots, k\}: \\ |\cS_1| = 2m- k \\ |\cS_2| =  |\cS_3|=  k - m  \\  \cS_1, \cS_2, \cS_3 \text{disjoint}  }}     h(x_{\cS_1}, x_{\cS_2}) h(x_{\cS_1}, x_{\cS_3}) ,\\
\text{ for each } k = m, \dots, 2m-1,
\end{multline}
and $\tilde{\frakh}$ is  
 the symmetric kernel of degree $2m$ further derived from \eqref{tilde_cH_k} defined as
\begin{equation} \label{tilde_frak_h}
\tilde{\frakh} (x_1,\dots, x_{2m}) \equiv \sum_{k=m}^{2m-1} {n - k\choose 2m  - k}^{-1}\sum_{1 \leq l_1 < \dots < l_k \leq 2m} \tilde{\cH}_k(x_{l_1},\dots, x_{l_k}).
\end{equation}
Next, upon expansion,
\begin{align*}
U_n^2 &= {n \choose m}^{-2} \sum_{ \substack{1 \leq i_1 < \cdots < i_m\leq n\\ 1 \leq j_1 < \cdots < j_m\leq n }}  h(X_{i_1}, \dots, X_{i_m})h(X_{j_1}, \dots, X_{j_m}) \\
&= {n \choose m}^{-2} \left( \sum_{k = m}^{2m} \sum_{1 \leq i_1 < \dots < i_k\leq n}  \breve{\cH}_k ( X_{i_1}, \dots, X_{i_k})\right)\\
& = {n \choose m}^{-2} \sum_{1 \leq i_1 < \dots < i_{2m} \leq n} \breve{\frakh}(X_{i_1},\dots, X_{i_{2m}}),
\end{align*}
where  $\breve{\cH}_k :\mathbb{R}^k \longrightarrow \mathbb{R}$  is a symmetric kernel of degree $k$ induced by $h(\cdot)$ defined as
\begin{equation} \label{breve_cH_k}
\breve{\cH}_k (x_1, \dots, x_k) \equiv  \sum_{\substack{\cS_1, \cS_2, \cS_3 \subset \{1, \dots, k\}: \\ |\cS_1| = 2m- k \\ |\cS_2| =  |\cS_3|=  k - m  \\  \cS_1, \cS_2, \cS_3 \text{disjoint}  }}     h(x_{\cS_1}, x_{\cS_2}) h(x_{\cS_1}, x_{\cS_3}) \text{, for each } k = m, \dots, 2m,
\end{equation}
and $\breve{\frakh}$ is  
 the symmetric kernel of degree $2m$ further derived from \eqref{breve_cH_k} defined as
\begin{equation} \label{breve_frak_h}
\breve{\frakh} (x_1,\dots, x_{2m}) \equiv \sum_{k=m}^{2m} {n - k\choose 2m  - k}^{-1}\sum_{1 \leq l_1 < \dots < l_k \leq 2m} \breve{\cH}_k(x_{l_1},\dots, x_{l_k}).
\end{equation}
With the above expressions for $\sum_{i=1}^n q_i^2$ and $U_n^2$ both as U-statistics of degree $2m$, from \eqref{real_studentizer}, one can write 
\begin{equation} \label{sigmahat_as_u_stat}
\sigmahat^2 = A(m, n)   \frac{\sum_{1 \leq i_1 < \dots < i_{2m} \leq n} \mathfrak{h}(X_{i_1},\dots, X_{i_{2m}}) }{{n \choose 2m}},
\end{equation}
where 
\begin{equation} \label{Anm_def}
A(n, m) \equiv    \frac{n-1}{(n-m)^2 (n-2m+1)} {n \choose 2m} {n-1 \choose m-1}^{-2}  
\end{equation}
and
\begin{equation}\label{frak_h_def}
\frakh (x_1, \dots, x_{2m}) \equiv (n-2m+1) \Bigg\{  \tilde{\frakh} (x_1,\dots, x_{2m}) - \frac{m^2}{n} \breve{\frakh}(x_1, \dots, x_{2m})\Bigg\};
\end{equation}
Hence, up to the multiplicative factor $A(n,m)$, $\sigmahat^2$ is  a U-statistic of degree $2m$.
Moreover, 
 it is not hard to see that 
\begin{equation} \label{non_negative_kernel_property}
\frakh(x_1, \dots, x_{2m})\geq 0 \text{ for all values of $x_1, \dots, x_{2m}$};
\end{equation}
when $n = 2m$, from the original definition of $\sigmahat^2$ in \eqref{real_studentizer}, it is seen that,  irrespective of the values of  $X_1, \dots, X_{2m}$,
\[
 A(m, n) \frakh(X_1, \dots, X_{2m}) = {\sigmahat}^2 =  \frac{n-1}{(n-m)^2} \sum_{i=1}^n (q_i- U_n)^2  \geq 0,
\]
so $\frakh$ can only take on non-negative value since $A(m, n)  > 0$.

With the insights above, we are primed to leverage the following exponential lower tail bound for non-negative kernel U-statistics to develop the exponential bound in \eqref{exponential_lower_bdd_D2}. This result is of independent interest, and it naturally extends a known   exponential lower tail bound for a sum of independent non-negative variables in the literature \citep[Theorem 2.19]{victor2009self}; surprisingly, we could not locate a result similar to \lemref{chernoff_lower_tail_bdd_U_stat} elsewhere. Its proof is included in \appref{otherproofs}, which uses a well-known trick by  \citet{hoeffding1963probability}.

\begin{lemma}[Exponential lower tail bound for U-statistics with non-negative kernels] \label{lem:chernoff_lower_tail_bdd_U_stat}
Assume that $U_n = {n \choose m}^{-1} \sum_{1\leq i_1<\dots <i_m \leq n} h(X_{i_1}, \dots, X_{i_m})$ is a U-statistic of degree $m$, and $h : \cX^m \longrightarrow \bR_{\geq0}$ can only take non-negative values, with the property that $\bE[h^p(X_1, \dots, X_m)] < \infty$ for some $p \in (1, 2]$. Then for $0 < x \leq  \bE[h]$,
\[
P(U_n \leq x) \leq \exp\left(\frac{- [ n/m ] (p-1)  (\bE[h]  -x)^{p/(p-1)}}{p (\bE[h^p])^{1/(p-1)} }\right),
\]
where $[n/m]$ is defined as the greatest integer less than $n/m$.
\end{lemma}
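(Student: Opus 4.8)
The plan is to run a Chernoff-type bound on $-U_n$ after first linearising the dependence structure via Hoeffding's averaging trick \citep{hoeffding1963probability}. Set $k = [n/m]$, so that $km \le n$, and write
\[
U_n = \frac{1}{n!}\sum_{\sigma \in S_n} \Phi\big(X_{\sigma(1)}, \dots, X_{\sigma(n)}\big), \qquad \Phi(x_1, \dots, x_n) \equiv \frac{1}{k}\sum_{j=0}^{k-1} h\big(x_{jm+1}, \dots, x_{jm+m}\big),
\]
which is a valid identity because $h$ is symmetric, so each of the $k$ disjoint-block terms averages to $U_n$ over the permutations. For every fixed $\sigma$, $\Phi(X_{\sigma(1)}, \dots, X_{\sigma(n)})$ is an average of $k$ independent copies of $h(X_1, \dots, X_m)$. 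Hence for any $\lambda > 0$, applying Jensen's inequality to the convex map $z \mapsto e^{-\lambda z}$ across the permutation average and then taking expectations,
\[
\bE\big[e^{-\lambda U_n}\big] \le \frac{1}{n!}\sum_{\sigma \in S_n} \bE\big[e^{-\lambda \Phi(X_{\sigma(1)}, \dots, X_{\sigma(n)})}\big] = \Big(\bE\big[e^{-(\lambda/k) h(X_1, \dots, X_m)}\big]\Big)^{k}.
\]
By Markov's inequality, $P(U_n \le x) = P\big(e^{-\lambda U_n} \ge e^{-\lambda x}\big) \le e^{\lambda x}\big(\bE[e^{-(\lambda/k)h}]\big)^{k}$ for every $\lambda > 0$, so the dependence has been reduced to controlling a single-block Laplace transform.

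The next step is to bound $\bE[e^{-(\lambda/k)h}]$ using only the moment $\bE[h^p]$. I would first establish the elementary pointwise inequality $e^{-u} \le 1 - u + u^p/p$ for all $u \ge 0$ and $p \in (1,2]$, then substitute $u = (\lambda/k)\,h(X_1, \dots, X_m) \ge 0$, take expectations, and use $1 + t \le e^t$ to get
\[
\bE\big[e^{-(\lambda/k)h}\big] \le 1 - \frac{\lambda}{k}\bE[h] + \frac{\lambda^p}{p k^p}\bE[h^p] \le \exp\!\Big(-\frac{\lambda}{k}\bE[h] + \frac{\lambda^p}{p k^p}\bE[h^p]\Big).
\]
Raising this to the $k$-th power and combining with the Chernoff bound above yields, for every $\lambda > 0$,
\[
P(U_n \le x) \le \exp\!\Big(-\lambda(\bE[h] - x) + \frac{\lambda^p \bE[h^p]}{p\, k^{p-1}}\Big).
\]

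Finally I would optimise the exponent over $\lambda > 0$. The case $x = \bE[h]$ is trivial since the stated bound is then $1$, so assume $a \equiv \bE[h] - x > 0$ and set $b \equiv \bE[h^p]/(p k^{p-1})$. Minimising $\lambda \mapsto -a\lambda + b\lambda^p$ at $\lambda_* = (a/(pb))^{1/(p-1)}$ gives minimum value $-\tfrac{p-1}{p}\,a^{p/(p-1)}(pb)^{-1/(p-1)}$; since $pb = \bE[h^p]/k^{p-1}$, this equals $-\tfrac{k(p-1)}{p}(\bE[h]-x)^{p/(p-1)}(\bE[h^p])^{-1/(p-1)}$, which is precisely the claimed exponent with $k = [n/m]$. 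The only genuinely delicate point is the pointwise inequality $e^{-u}\le 1 - u + u^p/p$ for non-integer $p \in (1,2)$: for $p=2$ it is the standard second-order Taylor estimate, while in general one checks that the difference and its first derivative vanish at $u=0$ and that the derivative $u^{p-1} + e^{-u} - 1$ is non-negative on $[0,\infty)$, using $u^{p-1}\ge u\ge 1-e^{-u}$ on $(0,1]$ and $u^{p-1}\ge 1$ on $[1,\infty)$. Everything else — Hoeffding's identity, Jensen, Chernoff, and the one-dimensional convex optimisation — is routine.
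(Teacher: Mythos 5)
Your proof is correct and follows essentially the same route as the paper: Hoeffding's permutation/block-averaging identity, Jensen's inequality applied to the convex map $z \mapsto e^{-\lambda z}$, the elementary pointwise bound $e^{-u} \le 1 - u + u^p/p$ (verified by the same derivative argument), and one-dimensional optimisation of the resulting exponent. The paper's proof is identical in substance, differing only in notation ($t$, $\kappa$, $s$ in place of your $\lambda$, $k$, $u$) and the order in which the elementary inequality is introduced.
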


Since
\[
  \frac{n ((n -m - 1)!)^2}{(n-2)! (n - 2m +1)!}  = 
  \begin{cases} 
& \frac{n}{n-m} \text{ if } m = 1 \text{ or } 2;\\
 &\frac{n}{n-2}\times \underbrace{\big(\frac{n-m-1}{n-3} \big) \cdot \big(\frac{n-m-2}{n-4} \big)\cdots \big(\frac{n-2m+2}{n-m}\big)}_{m-2 \text{ many terms}}
% \frac{   (n - m-1)(n - m-2)\cdots (n - 2m+2)}{ (n-3)(n-4) \cdots (n-m)}
  \text{ if }  m \geq 3,
  \end{cases}
\] 
in light of the assumption that $n > \max(2, m^2)$ (which implies $n > 2m$) and the definition of  $\mathfrak{b}_m$ in \eqref{frakbm_def},  one can derive the lower bound
\begin{equation} \label{lower_bdd_Amn}
A(n, m) = \frac{\{ (m-1)!\}^2}{(2m)!} \frac{n((n-m-1)!)^2}{ (n-2)! (n-2m +1)!}\geq \frac{\{ (m-1)! \}^2}{(2m)!} \mathfrak{b}_m .
\end{equation}
Moreover, because  for any disjoint subsets $\cS_1, \cS_2, \cS_3 \subset \{1, \dots, k\}$ such that $|\cS_1| = 2m-k$, $|\cS_2| = |\cS_3| = k-m$ , 
\[
 \bE[h(X_{\cS_1}, X_{\cS_2}) h(X_{\cS_1}, X_{\cS_3}) ] = \bE[\bE[ h(X_1, \dots, X_m)|X_{\cS_1}]^2 ] = \bE[ h_{2m-k}^2 (X_1, \dots, X_{2m-k})],
\]
we have
\begin{equation*}
\bE[ \tilde{\cH}_k  ]  =  (2m-k) {k \choose 2m- k} { 2k - 2m \choose k - m} \bE[h_{2m-k}^2]  \text{ and }
  \bE[ \breve{\cH_k}]  =  {k \choose 2m- k} { 2k - 2m \choose k - m} \bE[h_{2m-k}^2].
\end{equation*}  
As such, the expectation of $\frakh$ can be computed as
\begin{align}
&\bE[\frakh] \notag\\
 &=   (n-2m+1)  \Bigg\{  \bE[\tilde{\frakh}] - \frac{m^2}{n} \bE[\breve{\frakh}]\Bigg\} \notag \\
 &= (n-2m+1) \Bigg\{    \sum_{k=m}^{2m-1} {n - k\choose 2m  - k}^{-1} {2m \choose k} \bE[ \tilde{\cH}_k  ] - \frac{m^2}{n}  \sum_{k=m}^{2m} {n -k \choose 2m- k}^{-1} {2m \choose k} \bE[ \breve{\cH_k}] \Bigg\} \notag \\
&=(n-2m+1)\sum_{k=m}^{2m-1} {n -k \choose 2m- k}^{-1} {2m \choose k} {k \choose 2m- k} { 2k - 2m \choose k - m} \Big(2m - k - \frac{m^2}{n}\Big)  \bE[h_{2m-k}^2] \notag \\
&= (n-2m+1) \sum_{k = m}^{2m-1}  \frac{(2m)!}{\{(2m-k)!(k-m)!\}^2}{n-k \choose 2m-k}^{-1}   \Big(2m - k - \frac{m^2}{n}\Big)  \bE[h_{2m-k}^2] \notag \\
&=  \Big(1 - \frac{m^2}{n}\Big)    \frac{(2m)!}{\{(m-1)!\}^2} +  \sum_{k = m}^{2m-2}  \frac{(2m)! (n-2m+1)}{\{(2m-k)!(k-m)!\}^2}{n-k \choose 2m-k}^{-1}   \Big(2m - k - \frac{m^2}{n}\Big)  \bE[h_{2m-k}^2] \label{hfrak_explicit_mean}
,
\end{align}
where the third equality uses that $\bE[h_0^2] = \bE[h(X_1, \dots, X_m) h(X_{m+1}, \dots, X_{2m})] = 0$, and 
 the last equality uses $\bE[h_1^2] = \bE[g^2] = 1$. Under our assumption $n > m^2$, because the quantities $(1 - m^2/n)$ and $(2m - k - m^2/n)$ for all $k = m, \dots, 2m-2$ are positive, all the summands in \eqref{hfrak_explicit_mean} are positive. In particular, this implies 
 \begin{equation}\label{kernel_mean_lower_bound}
\bE[\frakh]
 \geq  \frac{(2m)!}{\{(m-1)!\}^2} \Big( 1 - \frac{m^2}{n}\Big) \geq  \frac{(2m)!}{\{(m-1)!\}^2} \Big( 1 - \frac{m^2}{m^2 +1}\Big).
 \end{equation}
 Hence, with the lower bound for $A(m,n)$ in \eqref{lower_bdd_Amn}, 
  \begin{align}
P\Big({ \sigmahat}^2 \leq \frac{9\frakc_m^2}{16}\Big) 
&\leq  P\Bigg(\frac{\sum_{1 \leq i_1 < \dots < i_{2m} \leq n} \mathfrak{h}(X_{i_1},\dots, X_{i_{2m}}) }{{n \choose 2m}} \leq \frac{9}{16} \cdot  \frac{(2m)!}{ \{(m-1)!\}^2} \Big( 1 - \frac{m^2}{m^2 +1}\Big) \Bigg) \notag\\
&\leq \exp\left( -\frac{[n/m] \Big\{   \frac{7(2m)!}{16\{(m-1)!\}^2}  \Big(1 - \frac{m^2}{m^2+1}\Big)  \Big\}^3 }{ 3(\bE[\frakh^{3/2}])^2}\right) \label{sigmahatstar_sq_lower_tail_bdd}
%&\leq \exp\Big( - \frac{c(m) n}{\bE[\frakh^{3/2}])^2}\Big)
 \end{align}
 where the last  inequality comes from applying \lemref{chernoff_lower_tail_bdd_U_stat} to  ${n \choose 2m}^{-1} \sum \frakh$ by taking 
 $x =  \frac{9(2m)!}{16\{(m-1)!\}^2} (1- \frac{m^2}{m^2 +1})$ and $p = 3/2$, using the kernel non-negativity in \eqref{non_negative_kernel_property} and  the kernel mean lower bound in \eqref{kernel_mean_lower_bound}. 
 The following  moment bound for centered U-statistics proved in \appref{otherproofs} can  be used to further understand $\bE[|\frakh|^{3/2}]$.

\begin{lemma}[General moment bound of U-statistics]\label{lem:non_integral_moment_bdd_u_stat}
Suppose $h(x_1, \dots, x_m)$ is a real-valued  symmetric kernel, with $\bE[h(X_1, \dots, X_m)] = 0$ and $\bE[|h(X_1, \dots, X_m)|^p] < \infty$ for some $p \in [1, \infty)$.  Let $r \geq 1$ be the order of degeneracy  of $U_n$, i.e.  $r$ is the first integer for which, as functions,
\[
h_k(x_1, \dots,x_k) = 0\text{ for } k = 1, \dots, r-1, \text{ and } \quad h_r(x_1, \dots, x_r) \neq 0.
\]
For  positive constants $C(m, r, p) >0$, we have
\begin{equation} \label{simple_u_stat_moment_bdd}
 \bE[|U_n|^p] \leq  
  \begin{cases} 
\frac{ C(m, r, p) \bE[|h|^p]}{ n^{r(p -1)}} & \text{if } 1 \leq p \leq 2; \\
\frac{C(m, r, p) \bE[|h|^p]}{n^{(rp)/2}} & \text{if }  p \geq 2.
  \end{cases}
\end{equation}
\end{lemma}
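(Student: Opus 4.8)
The plan is to reduce everything, via the Hoeffding decomposition, to a moment bound for \emph{completely degenerate} U-statistics, which I would then establish by induction on the degree using a martingale-difference representation.

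First I would write $U_n = \sum_{k=r}^{m}\binom{m}{k}\,U_n^{(k)}$, where $U_n^{(k)} = \binom{n}{k}^{-1}\sum_{1\le i_1<\dots<i_k\le n}g_k(X_{i_1},\dots,X_{i_k})$ and $g_k$ is the $k$-th Hoeffding projection of $h$, assembled by inclusion--exclusion from the functions $h_j$ of \eqref{h_k_def}. Each $g_k$ is symmetric, completely degenerate --- so that $\bE[g_k(X_1,\dots,X_k)\mid X_1,\dots,X_{k-1}]=0$ --- and, because conditional expectation is an $L_p$-contraction exactly as in \eqref{Jensen}--\eqref{Jensen_consequence}, obeys $\|g_k\|_p\le C(k)\|h\|_p$; moreover the sum starts at $k=r$ since $g_k\equiv 0$ for $k<r$ by the definition of the degeneracy order. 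By the triangle inequality in $L_p$ it then suffices to prove, for any completely degenerate symmetric kernel $g$ of degree $k\ge 1$ with $\bE[|g|^p]<\infty$,
\[
\bE\big[|U_n^{(k)}|^p\big]\le C(k,p)\,n^{-\alpha(k,p)}\,\bE[|g|^p],\qquad
\alpha(k,p):=\begin{cases} k(p-1), & 1\le p\le 2,\\ kp/2, & p\ge 2,\end{cases}
\]
because $\alpha(k,p)\ge\alpha(r,p)$ for $k\ge r$ makes every term dominated by $C(m,r,p)\,n^{-\alpha(r,p)}\bE[|h|^p]$.

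For the displayed bound I would induct on $k$. With $\mathcal{F}_j=\sigma(X_1,\dots,X_j)$, split $U_n^{(k)}=\sum_{j=k}^n D_j$ according to the largest index appearing in each summand, so that $D_j=\binom{n}{k}^{-1}\sum_{i_1<\dots<i_{k-1}<j}g(X_{i_1},\dots,X_{i_{k-1}},X_j)$; complete degeneracy of $g$ forces $\bE[D_j\mid\mathcal{F}_{j-1}]=0$, making $(D_j)_{j=k}^n$ a martingale-difference sequence, so Burkholder's inequality gives $\bE[|U_n^{(k)}|^p]\le C(p)\,\bE\big[(\sum_j D_j^2)^{p/2}\big]$. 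When $1\le p\le 2$ I would bound the right-hand side by $C(p)\sum_j\bE[|D_j|^p]$ (subadditivity of $t\mapsto t^{p/2}$), and when $p\ge 2$ by $C(p)\big(\sum_j\|D_j\|_p^2\big)^{p/2}$ (triangle inequality in $L_{p/2}$); in both cases the problem reduces to estimating $\|D_j\|_p$. Conditioning on $X_j=x$, $D_j$ equals $\binom{n}{k}^{-1}\binom{j-1}{k-1}$ times a degree-$(k-1)$ U-statistic on $X_1,\dots,X_{j-1}$ whose kernel $g(\cdot,\dots,\cdot,x)$ is again completely degenerate, so the inductive hypothesis together with integration over $x$ yields $\|D_j\|_p^p\le C(k,p)\big(j^{k-1}/n^k\big)^p j^{-\alpha(k-1,p)}\bE[|g|^p]$, where I use $\binom{j-1}{k-1}/\binom{n}{k}\le C(k)\,j^{k-1}/n^k$. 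Substituting this and summing $\sum_{j\le n}j^{k-1}\asymp n^k$ reproduces exactly the exponent $\alpha(k,p)$; the base case $k=1$ is the same two estimates applied to $D_j=n^{-1}g(X_j)$, i.e.\ the Marcinkiewicz--Zygmund inequality when $1\le p\le2$ and Rosenthal's inequality when $p\ge2$. The binomial estimate just invoked requires $n$ to exceed a threshold depending only on $m$; for the finitely many smaller $n$ the lemma is trivial, since $\bE[|U_n|^p]\le\bE[|h|^p]$ by Jensen's inequality for the U-statistic average while $n^{\alpha(r,p)}$ is then bounded by a constant $C(m,r,p)$.

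The step I expect to be the real obstacle is the subrange $1\le p<2$: one cannot interpolate between the easy endpoints $p=1$ and $p=2$, because $\|g_k\|_2$ --- and hence the variance of $U_n^{(k)}$ --- may well be infinite under the stated low-moment hypothesis, so the argument must stay inside $L_p$ throughout. The martingale-difference splitting combined with the bound $(\sum_j D_j^2)^{p/2}\le\sum_j|D_j|^p$ is what sidesteps this, and the bookkeeping identity that lets the induction close --- $(k-1)p-(k-1)(p-1)=k-1$ when $p\le2$, and $2(k-1)-(k-1)=k-1$ when $p\ge2$, so that $\sum_{j\le n}j^{k-1}\asymp n^k$ cancels exactly one factor $n^k$ at each stage --- is where the bookkeeping needs to be done carefully.
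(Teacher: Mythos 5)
Your argument is correct and self-contained, whereas the paper proves the lemma essentially by citation: after passing to the canonical (Hoeffding) projections $g_k$, it invokes Theorems~2.1.3 and~2.1.4 of Korolyuk and Borovskich (\emph{Theory of U-statistics}, 2013), which supply exactly the two displayed bounds (with explicit constants $\alpha_p^{k+1}$, $\gamma_p^{k+1}$), and then closes via $\bE[|g_k|^p]\le C(k)\,\bE[|h|^p]$. You reproduce the same first step --- writing $U_n=\sum_{k\ge r}\binom{m}{k}U_n^{(k)}$ over completely degenerate projections and using the contraction bound for $\|g_k\|_p$ --- but then actually \emph{re-prove} the degenerate case by induction on the degree, through the filtration-by-largest-index martingale-difference decomposition, Burkholder, and the correct bifurcation between $(\sum_jD_j^2)^{p/2}\le\sum_j|D_j|^p$ when $p\le2$ and the $L_{p/2}$-triangle inequality when $p\ge2$; your bookkeeping ($\binom{j-1}{k-1}/\binom{n}{k}\lesssim j^{k-1}/n^k$, $\sum_{j\le n}j^{k-1}\asymp n^k$, trivial small-$n$ case) closes the induction and recovers the exponents $\alpha(k,p)$. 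This is precisely the mechanism behind the cited Korolyuk--Borovskich theorems, so there is no gap; the trade-off is that the paper's proof is two lines at the cost of an external reference with explicit constants, while yours is longer but elementary and makes visible exactly where the $1\le p<2$ regime differs from $p\ge2$. Two small notational points: the contraction $\|g_k\|_p\le C(k)\|h\|_p$ needs the Möbius/inclusion--exclusion expansion of $g_k$ in terms of the $h_j$ before \eqref{Jensen} applies (your reference to \eqref{Jensen}--\eqref{Jensen_consequence} is for $h_k$, $\bar h_k$, not $g_k$ directly), and the $p=1$ endpoint of Burkholder is degenerate --- there the claim is $\bE|U_n|\le\bE|h|$, which is immediate from the triangle inequality, so no BDG-type inequality is needed.
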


 With \lemref{non_integral_moment_bdd_u_stat}, by the Cauchy inequality 
\[
 \bE[|h(X_{\cS_1}, X_{\cS_2}) h(X_{\cS_1}, X_{\cS_2})|^{3/2}] \leq  \sqrt{\bE[|h(X_{\cS_1}, X_{\cS_2}) |^3]}  \sqrt{\bE[|h(X_{\cS_1}, X_{\cS_3}) |^3]} = \bE[|h|^3] , 
\]
for any $\cS_1, \cS_2, \cS_3 \subset \{1, \dots, k\}$, 
one can derive
\begin{align*}
&\bE[|\frakh|^{3/2}]\\
 &\leq C n^{3/2} (\bE[|\tilde{\frakh}|^{3/2}] + \frac{m^3}{n^{3/2}}\bE[|\breve{\frakh}|^{3/2}])\\
 &\leq C(m)n^{3/2} \Bigg( \sum_{k=m}^{2m-1} {n -k \choose 2m-k}^{-3/2} \bE[|\tilde{\cH}_k|^{3/2}] + n^{-3/2} \sum_{k = m}^{2m} {n -k \choose 2m-k}^{-3/2} \bE[|\breve{\cH}_k|^{3/2}]  \Bigg)\\
&\leq C(m) n^{3/2}  \Bigg( \sum_{k=m}^{2m-1} {n -k \choose 2m-k}^{-3/2} \bE[|h|^3] + n^{-3/2} \sum_{k = m}^{2m} {n -k \choose 2m-k}^{-3/2} \bE[|h|^3]  \Bigg)\\
&= C(m) \bE[|h|^3], 
\end{align*}
which can then conclude \eqref{exponential_lower_bdd_D2}  from \eqref{sigmahatstar_sq_lower_tail_bdd}.

\subsection{How the constants scale in $m$} \label{sec:remark1}
The special constants $\frakcm$ and $\frakbm$ defined in \eqref{frakcm_def} and \eqref{frakbm_def} play critical roles in arriving at our lower tail bound for $\hat{\sigma}^2$ in \eqref{sigmahatstar_sq_lower_tail_bdd}, which ultimately induces the correction term with exponential decay in $n$, in the final B-E bounds \eqref{BE_Tn_one_sample} and \eqref{BE_Tn_one_sample_simpler}. For $m \geq 3$, by  Stirling's formula, we get that
\[
\frakbm = \frac{(m!)^2}{2 \cdot (2m-2)!}  \sim \frac{2\sqrt{\pi} }{e^2 4^{m}} \cdot \Big(\frac{m}{m-1}\Big)^{2m} \cdot m \cdot(m-1)^{3/2},
\]
where "$\sim$" means their ratio tends to $1$ as $m \rightarrow \infty$. As such, $\frakbm$, and therefore $\frakcm$, decays exponential in $m$, due to the factor $4^m$ on the right hand side of the prior display. By observing where the constant $\frakcm$ figures in the inequalities of \lemref{bdd_for_remnants}$(i) - (v)$, our proof methodology indicates that the big constants appearing in \eqref{BE_Tn_one_sample} and \eqref{BE_Tn_one_sample_simpler}, denoted by $C(m)$, could potentially grow exponentially in $m$!   This is in stark contrast to the B-E bounds of the \emph{standardized} U-statistics in \eqref{standardized_ustat_unif_BE} and \eqref{standardized_ustat_nonunif_BE}, where the constants are known to only scale like $\sqrt{m}$ \citep[Theorem 3.1]{chen2007normal}. It is not clear to us whether there could be a different proof that can bring down the order of these constants in $m$. Note that,  $\frakbm$ and $\frakcm$ are direct by-products of the factor $A(n, m)$  from analyzing the Studentizer $\hat{\sigma}$ with the tight exponential lower tail bound of \lemref{chernoff_lower_tail_bdd_U_stat}, and $A(n, m)$ is in itself intrinsic to the structure of the Studentizer $\hat{\sigma}$ as seen in \eqref{sigmahat_as_u_stat}. As such, the possible exponential dependence on $m$ of our constants in \thmref{main} could well be a unique, perhaps undesirable, nature of Studentized U-statistics.  

\subsection{The required sample size  relative to $m$}  \label{sec:remark2}

 In our proof above, we have effectively used the assumed condition $m^2 < n$ to demonstrate that all the  summands  in \eqref{hfrak_explicit_mean} are positive, and then established a positive lower bound for the expectation of the kernel $\frakh$ of $\hat{\sigma}^2$ in \eqref{kernel_mean_lower_bound}; this gives way to using \lemref{chernoff_lower_tail_bdd_U_stat} to establish the exponential lower bound in  \eqref{sigmahatstar_sq_lower_tail_bdd}.

 To  weaken the condition to the more typical $2m < n$ assumed for the uniform B-E bound in \thmref{BE_unif},  a possible avenue is to first establish a nonuniform B-E bound for the \emph{self-normalized U-statistic}
\begin{equation}\label{sn_ustat}
T_n^*  \equiv  \frac{\sqrt{n}}{m \sigmahatstar} U_n,
\end{equation}
where ${\sigmahatstar}^2 \equiv \frac{n-1}{(n-m)^2} \sum_{i=1}^n q_i^2$,
 i.e. establishing a bound of the form
\begin{equation} \label{nonunif_BE_selfnorm_ustat}
|P(T_n^* \leq x) - \Phi(x)| \leq   \exp\Bigg(-\frac{c(m)n \sigma^6 }{ (\bE[|h|^3])^2} \Bigg) + \frac{C(m)\bE[|h|^3]}{(1+ |x|^3)\sqrt{n} \sigma^3} \text{ for } x \in \mathbb{R}
\end{equation}
analogous to \eqref{BE_Tn_one_sample_simpler}, by employing a similar  strategy to how our current B-E bound for the Studentized $T_n$ was established, in which case an exponential lower bound for ${\sigma^*}^2$ analogous to \eqref{sigmahatstar_sq_lower_tail_bdd} has to be established by using  \lemref{chernoff_lower_tail_bdd_U_stat}. In some unreported calculations, we found that the weaker assumption $2m< n$  suffices to derive the said exponential lower bound. To leverage \eqref{nonunif_BE_selfnorm_ustat} as a "bridge" to establish the nonuniform bound for the Studentized U-statistic $T_n$ in  \eqref{BE_Tn_one_sample_simpler}, one can then potentially exploit the well-known equity of the events 
\begin{equation}\label{equity_of_events}
\{ T_n > x\} = \{ T_n^* > x b_{m, n}(x) \} \text{ for any } x \geq 0
\end{equation}
that results from  the algebraic relationship
\begin{equation} \label{algebraic_relationship_ustat}
T_n = \frac{T_n^*}{\Big(1 - \frac{m^2 (n-1)}{(n-m)^2} {T_n^*}^2\Big)^{1/2} },
\end{equation}
 where we have defined \[
b_{m, n}(x) \equiv \bigg(1 + \frac{m^2 (n-1) x^2}{(n-m)^2} \bigg)^{-1/2};
\]
we note in passing that the relationship in \eqref{algebraic_relationship_ustat} is analogous to the relationship between  the self-normalized sum $S_n/V_n$ and  $T_{student}$ in \eqref{efron_relationship}, and has been used in \citet{lai2011cramer} and \citet{shao2016cramer} to establish  \emph{\cramer-type} moderate deviation results for Studentized U-statistics. 
From \eqref{nonunif_BE_selfnorm_ustat} and \eqref{equity_of_events}, one can write 
\begin{equation}  \label{last_relevant_term}
|P(T_n \leq  x) - {\Phi}(x) | 
%&\leq   |\bar{\Phi}(xb_{m,n}(x)) + \bar{\Phi}(x)|  +  |P(T_n^* > xb_{m,n}(x)) - \bar{\Phi}(xb_{m,n}(x)) | \notag \\
\leq  \exp\Bigg(-\frac{c(m)n \sigma^6 }{ (\bE[|h|^3])^2} \Bigg)+ |\bar{\Phi}(xb_{m,n}(x)) + \bar{\Phi}(x)|  + \frac{C(m)\bE[|h|^3]}{(1+ |xb_{m,n}(x)|^3)\sqrt{n} \sigma^3},
\end{equation}
and further bound the last two terms on the right; 
without loss of generality, we can focus on the range $x \geq 0$. The  term  $|\bar{\Phi}(xb_{m,n}(x)) + \bar{\Phi}(x)|$ is quite easy to bound, but we skip the details and refer to \secref{anotheranothersubsection} for similar arguments used to handle an analogous quantity for the t-statistic, where we prove \thmref{nonunif_BE_sn_sum}. 
However, a bottleneck arises when attempting to bound the last term in \eqref{last_relevant_term}: Under $2m < n$ where one has
$
 b_{m, n}(\sqrt{n}) = (1 + \frac{m^2 (n-1) n}{(n-m)^2} )^{-1/2}
  \geq  (1 + \frac{m^2 n^2}{(n-m)^2} )^{-1/2} 
 \geq  (1 + 4m^2)^{-1/2}$,
while the nonuniform multiplicative factor in $x$   is seen to be such that 
\[
\frac{1}{1 + (x b_{m, n}(x))^3} \leq \frac{1}{1 + (x b_{m, n}(\sqrt{n}))^3} \leq \frac{1}{1 + (x (1 + 4m^2)^{-1/2})^3} \text{ for } 0 \leq x \leq \sqrt{n},
\]
 the factor doesn't vanish  as $x \rightarrow \infty$ because $\lim_{x \rightarrow \infty}  x b_{m, n}(x) = \frac{n-m}{m \sqrt{n-1}}$. This means, for the range $x \geq \sqrt{n}$, one has to show that the absolute difference $|P(T_n^* > xb_{m,n}(x)) - \bar{\Phi}(xb_{m,n}(x)) |$ is no larger than our exponential correction factor $\exp\big(-\frac{c(m)n \sigma^6 }{ (\bE[|h|^3])^2} \big)$, perhaps up to  an  absolute multiplicative constant in $m$. We believe this is possible since both $\bar{\Phi}(xb_{m,n}(x))$ and $P(T_n^* > xb_{m,n}(x))$  are expected to be small for $x \geq \sqrt{n}$.  By a standard upper bound of the normal survival function $\barPhi(\cdot)$ \citep[p.16, $(2.11)$]{chen2011normal} and the fact that $x b_{m, n}(x)$ is increasing in $x \geq 0$,
\begin{align*}
\bar{\Phi}(x b_{m,n}(x))  &\leq \min \Bigg(\frac{1}{2} , \frac{1}{xb_{m,n}(x) \sqrt{2 \pi}} \Bigg) \exp \Bigg( - \frac{(xb_{m,n}(x))^2 }{2}\Bigg)\\
&\leq  \   \exp \Bigg( -  \frac{n}{2}  \Bigg( 1 +  \frac{m^2 (n-1)n}{(n-m)^2}\Bigg)^{-1} \Bigg)  \quad \text{ for }  x \geq \sqrt{n},
\end{align*}
which has the desired exponential rate of decay in $n$. Our intuition is that the term $P(T_n^* > xb_{m,n}(x))$ is also expected to have some form of exponential decay in $x$ to induce an exponentially decaying term in $n$, but important Hoeffding-type inequalities  comparable to those available for the self-normalized sum are missing in the literature; see \lemref{subGauss_sn_sum} below. A quest for such inequalities is an important problem that deserves independent investigation. 

%because  the nonuniform factor $\frac{1}{(1+ |xb_{m,n}(x)|^3)}$ in the middle term on the right hand side of \eqref{middle_term} does not vanish as $|x| \rightarrow \infty$, because 

\section{Proof of the nonuniform B-E bound for Student's t-statistic} \label{sec:sn_sum_pf}

We now prove the refined nonuniform bound for the Student's t-statistic and self-normalized sum in \thmref{nonunif_BE_sn_sum}.
It suffices to consider $x \geq 0$, whether we are aiming to establish the theorem for the self-normalized sum $S_n/V_n$ or the t-statistic $T_{student}$, otherwise one can  replace the $X_i$'s with $- X_i$'s instead. 
 A technical tool that we will use is the following Hoeffding-type bound  that can be found in \citet[Theorem 2.16, p.12]{victor2009self}:
 
\begin{lemma}[Sub-Gaussian property for self-normalized sums] \label{lem:subGauss_sn_sum}
Under the same assumptions as  \thmref{nonunif_BE_sn_sum}, it is true that, for any $x \geq 0$,
\[
P\Big(S_n > x (4 \sqrt{n} \|X_1\|_2 +V_n) \Big) \leq 2 e^{-x^2/2}.
\]
\end{lemma}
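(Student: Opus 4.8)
\emph{Plan.} The inequality is a self-normalized Hoeffding-type bound, and I would prove it by the standard two-stage route behind such estimates (cf.\ \citet[Theorem 2.16]{victor2009self}): first produce a one-parameter family of exponential supermartingale-type inequalities for $S_n$ self-normalized by $V_n^2$, then apply de la Pe\~{n}a's pseudo-maximization (``method of mixtures'') to convert that family into a tail bound. Two preliminary reductions are worth recording: when $x$ is below an absolute constant the asserted bound holds trivially since its right-hand side is then $\geq 1$; and the event $\{S_n > x(4\sqrt n\|X_1\|_2 + V_n)\}$ is empty once $x \geq \sqrt n$, because on it $S_n > xV_n$ whereas $S_n^2 \leq n V_n^2$ by Cauchy--Schwarz. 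So only the range $x \in [c_0,\sqrt n)$ requires work.

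\emph{Step 1 (exponential estimate).} The elementary inequality $e^{u - u^2/2} \leq 1 + u + \tfrac{u^2}{2}$, valid for all $u \in \bR$ (the difference vanishes to second order at $0$ and is convex on $\bR$), gives for each mean-zero $X_i$ and every $\lambda \in \bR$
\[
\E\!\left[\exp\!\Big(\lambda X_i - \tfrac{\lambda^2}{2}X_i^2\Big)\right] \leq 1 + \tfrac{\lambda^2}{2}\E[X_i^2] \leq \exp\!\Big(\tfrac{\lambda^2}{2}\E[X_i^2]\Big),
\]
that is, $\E[\exp(\lambda X_i - \tfrac{\lambda^2}{2}(X_i^2 + \E[X_i^2]))] \leq 1$. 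Multiplying over $i$ (independence) yields, for every $\lambda$, $\E[\exp(\lambda S_n - \tfrac{\lambda^2}{2}L_n^2)] \leq 1$, where $L_n^2 := V_n^2 + B_n^2$ and $B_n^2 := \sum_i \E[X_i^2] = n\|X_1\|_2^2$.

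\emph{Step 2 (pseudo-maximization).} Integrating the previous inequality against a centered Gaussian density in $\lambda$ and using Fubini gives, for every $y_0 > 0$,
\[
\E\!\left[\frac{y_0}{\sqrt{L_n^2 + y_0^2}}\,\exp\!\Big(\frac{S_n^2}{2(L_n^2 + y_0^2)}\Big)\right] \leq 1.
\]
Choosing $y_0$ a fixed multiple of $B_n$ so that $L_n^2 + y_0^2 \leq (4B_n + V_n)^2$, on the event $\{S_n \geq x(4B_n + V_n)\}$ the exponential factor is at least $e^{x^2/2}$. Bounding the prefactor from below (using monotonicity of $t \mapsto t^{-1/2}e^{c/t}$ and the confinement $x < \sqrt n$), and splitting the event according to the size of $V_n$ to handle the region where this prefactor degrades, one arrives at $P(S_n \geq x(4\sqrt n\|X_1\|_2 + V_n)) \leq 2\,e^{-x^2/2}$, the constant $2$ absorbing the numerics.

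\emph{Main obstacle.} Step 1 is routine once the elementary lemma is in hand, and Step 2's supermartingale-to-expectation passage is formal. The delicate point is the last part of Step 2: the pseudo-maximization identity produces an \emph{expectation}, not a probability, and the prefactor $y_0/\sqrt{L_n^2 + y_0^2}$ is \emph{not} bounded below over the whole event — it degrades exactly when $V_n$, hence $S_n$, is large. Extracting the clean tail bound with the precise additive term $4\sqrt n\|X_1\|_2$ and constant $2$ is where the real work lies; in particular a naive union bound over dyadic shells of $V_n$ does not close, since the shells contribute comparably and there are infinitely many of them, which is precisely why the pseudo-maximization (rather than peeling) is the right tool.
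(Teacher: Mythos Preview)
The paper does not prove this lemma: it simply quotes it as Theorem~2.16 of \citet{victor2009self} and uses it as a black box. Your sketch is essentially an outline of the proof found in that reference, so there is nothing to compare at the level of approach---you are supplying exactly what the paper omits by citation.

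On the substance of your sketch: Step~1 is correct (the elementary inequality $e^{u-u^2/2}\le 1+u+u^2/2$ holds for all real $u$, since the difference is convex with a global minimum of $0$ at $u=0$), and the pseudo-maximization identity in Step~2 follows immediately. The obstacle you flag at the end is real and is indeed the nontrivial part of the argument in \citet{victor2009self}; your sketch does not resolve it, but you have correctly isolated where the work lies. Since the paper itself makes no attempt at this and defers entirely to the reference, your proposal goes further than the paper does.
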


\subsection{Proof for the self-normalized sum}
We will first prove  a more general bound for the self-normalized sum:
\begin{equation} \label{general_sn_sum_be_bdd}
|P(S_n / V_n > x) - \barPhi(x)| \leq
  \begin{cases} 
&C \frac{(1 + x)^{2}}{e^{x^2/2}} \frac{ \bE[|X_1|^3]}{\sqrt{n} (\bE[X_1^2])^{3/2}} \text{ for } 0 \leq x \leq n^{1/6} \frac{\|X_1\|_2 }{\|X_1\|_3}; \\
 & \exp \Bigg( \frac{-   n ( \bE[X_1^2])^3}{16 (\bE[X_1^3])^2} \Bigg) +  2 \exp \Bigg( -\frac{x^2}{162} \Bigg)   \text{ for } x > n^{1/6} \frac{\|X_1\|_2 }{\|X_1\|_3}.
  \end{cases}
\end{equation}

Now we prove \eqref{general_sn_sum_be_bdd}.
As a simple consequence of \emph{\cramer-type moderate deviation} for self-normalized sums \citep[Theorem 2.3]{jing2003self}, one can derive the nonuniform B-E bound
\begin{equation} \label{consequence_of_cramer}
|P(S_n / V_n > x) - \barPhi(x)| \leq  C \frac{(1 + x)^{2}}{e^{x^2/2}} \frac{ \bE[|X_1|^3]}{\sqrt{n} (\bE[X_1^2])^{3/2}} \text{ for } 0 \leq x \leq n^{1/6} \frac{\|X_1\|_2 }{\|X_1\|_3};
\end{equation}
see \citet[Eqn. $(2.11)$, p.2171]{jing2003self}.
For  any $x > n^{1/6} \|X_1\|_2/\|X_1\|_3$, 
\begin{align}
P\bigg(S_n / V_n  > x\bigg) 
 &\leq  P\bigg( V_n\leq \sqrt{n} \|X_1\|_2/2 \bigg) + P(S_n > x V_n, V_n > \sqrt{n} \|X_1\|_2/2) \notag\\
&\leq P\Bigg(\frac{V_n^2}{n} \leq \frac{ \bE[X_1^2]}{4} \Bigg) + P\Bigg(S_n > \frac{x(4 \sqrt{n} \|X_1\|_2 + V_n ) }{9} \Bigg) \notag\\
&\leq \exp \Bigg( \frac{-\frac{n}{2} (\frac{3}{4} \bE[X_1^2])^3}{1.5 (\bE[X_1^3])^2} \Bigg) + 2 \exp \Bigg( -\frac{x^2}{162} \Bigg) \text{ by \lemsref{chernoff_lower_tail_bdd_U_stat} and \lemssref{subGauss_sn_sum}} \notag\\
&\leq \exp \Bigg( \frac{- n (\bE[X_1^2])^3}{16(\bE[X_1^3])^2} \Bigg) + 2 \exp \Bigg( -\frac{x^2}{162} \Bigg)  \label{consequence_of_sub_gauss_sn}.
\end{align}
Moreover,  by the standard normal tail bound,
\begin{equation} \label{gauss_tail_bdd_consequence}
\barPhi(x) \leq  \frac{1}{2}e^{-x^2/2} \leq 2 \exp \Bigg( -\frac{x^2}{162} \Bigg)
\end{equation}
Combining \eqref{consequence_of_cramer} for $0 \leq x \leq n^{1/6} \|X_1\|_2 /\|X_1\|_3$ along with \eqref{consequence_of_sub_gauss_sn} and \eqref{gauss_tail_bdd_consequence} for $x > n^{1/6} \|X_1\|_2/\|X_1\|_3$, we get that the bound \eqref{general_sn_sum_be_bdd} for the self-normalized sum.

Lastly, 
the term $ 2 e^{-x^2/162} $ in the bound from \eqref{general_sn_sum_be_bdd} can be bounded as 
\begin{multline} \label{lucky}
2 e^{-x^2/162}  = 2e^{- x^2/324} \underbrace{ \exp\left(- \frac{ x^2}{324} + 3 \log (x)\right)}_{\leq C} x^{-3} \\
 \leq C e^{-cx^2}  \frac{\bE[|X_1|^3]}{\sqrt{n} (\bE[X_1^2])^{3/2}} \text{ for } x > n^{1/6} \frac{\|X_1\|_2 }{\|X_1\|_3}.
\end{multline}
Combining \eqref{general_sn_sum_be_bdd} and \eqref{lucky} and suitably adjusting the absolute constants, we get the desired bound 
\begin{equation} \label{desired_bdd_sn_sum}
|P(S_n / V_n > x) - \barPhi(x)| \leq  \frac{C}{e^{cx^2}} \frac{ \bE[|X_1|^3]}{\sqrt{n} (\bE[X_1^2])^{3/2}} + \exp \Bigg( \frac{-   n ( \bE[X_1^2])^3}{16 (\bE[X_1^3])^2} \Bigg)   \text{ for all } x\geq 0. 
\end{equation}
(In particular, this means, for the self-normalized sum, the constant $C_1$ in  \thmref{nonunif_BE_sn_sum} can be simply taken to be $1$.)

\subsection{Proof for the t-statistic}

To prove the theorem for $T_{student}$, we will adapt a "bridging" argument found in  \citet{jing1999exponential}: Define the function
\[
a_n(x) = a_{n,x}\equiv \Bigg(\frac{n }{n + x^2 - 1} \Bigg)^{1/2},
\]
which has the property that
\begin{equation} \label{lower_upper_bdd_anx}
1/\sqrt{2} \leq \anx \leq \sqrt{2} \text{ for } 0 \leq x \leq \sqrt{n},
\end{equation}
considering that $n \geq 2$. Moreover, the function $x a_n(x)$ is increasing in $x$ because 
\begin{equation} \label{ascending_prop_sn}
\frac{d}{dx} x a_n(x) =  \bigg(\frac{n}{n+x^2 -1}\bigg)^{1/2} \bigg(1- \frac{x^2}{n + x^2 -1}\bigg) >0 
\end{equation}
for $n \geq 2$. 
 Using  the  well-known algebraic relationship in \eqref{efron_relationship}, we have the event equivalence
\[
\Big\{T_{student} > x \Big\} = \Bigg\{   \frac{S_n}{V_n} > x  a_n(x)\Bigg\} \text{ for any } x \geq 0.
\]
Then, by the triangular inequality we have
\begin{equation} \label{tri_ineq_bridge}
|P(T_{student} > x) - \barPhi(x)| \leq |P(S_n/V_n >  x a_n(x)) - \barPhi(x a_n(x))| + |\barPhi(x a_n(x)) - \barPhi(x)|.
\end{equation}

\ \

\subsubsection{Bounding $|P(S_n/V_n >  x a_n(x)) - \overline{\Phi}(x a_n(x))| $ }

From \eqref{lower_upper_bdd_anx}, it must be true that for any small constant $c >0$,
\[
\frac{(1 + x a_{n,x})^{2}}{e^{c(x a_{n,x})^2}} \leq \frac{ (1 + \sqrt{2} x)^2}{ e^{cx^2/2}}\text{ for } 0 \leq x \leq \sqrt{n},
\]
which also implies
 \begin{multline} \label{something1}
\big|P( S_n/V_n > x a_{n, x}) - \barPhi( x a_{n, x})\big| \leq  
\exp \Bigg( \frac{-   n ( \bE[X_1^2])^3}{16 (\bE[X_1^3])^2} \Bigg) + C \frac{(1 + x)^{2}}{e^{cx^2}} \frac{ \bE[|X_1| ^3]}{\sqrt{n} (\bE[X_1^2])^{3/2}}\\
 \text{ for } 0 \leq x \leq \sqrt{n}
\end{multline}
from \eqref{desired_bdd_sn_sum} by adjusting the constants $C, c$. Since $x a_{n,x}$ is increasing by \eqref{ascending_prop_sn}, 
\begin{equation} \label{something}
x a_{n,x} \geq  \sqrt{n} a_n(\sqrt{n}) =  \frac{n}{\sqrt{2n-1}}\text{ for } x \geq \sqrt{n}.
\end{equation}
As $n/\sqrt{2n-1} \geq n^{1/6} \|X_1\|_2/\|X_1\|_3$, we can then apply \eqref{general_sn_sum_be_bdd}  to get
\begin{align}
&\big|P( S_n/V_n > x a_{n, x}) - \barPhi( x a_{n, x})\big| \notag \\
&\leq \exp \Bigg( \frac{-  n ( \bE[X_1^2])^3}{ 16(\bE[X_1^3])^2} \Bigg) +   2 \exp \Bigg( -\frac{x^2 a_{n, x}^2}{162} \Bigg)    \notag\\
&\leq  \exp \Bigg( \frac{-  n ( \bE[X_1^2])^3}{ 16(\bE[X_1^3])^2} \Bigg) +  2 e^{- 162^{-1} n^2/(2n-1)} \text{ for } x \geq \sqrt{n}, \text{ by } \eqref{something}
\label{somethingsomething}
\end{align}
Combining \eqref{something1} and \eqref{somethingsomething}, as well as $(\bE[X_1^2])^3/ (\bE[X_1^3])^2 \leq 1$, upon adjusting the absolute constants we have
 \begin{multline}  \label{pre_bridge}
\big|P( S_n/V_n > x a_{n, x}) - \barPhi( x a_{n, x})\big| \leq  
C_1\exp \Bigg( \frac{-  c_1 n ( \bE[X_1^2])^3}{ (\bE[X_1^3])^2} \Bigg) +  \frac{C_2}{e^{c_2x^2}} \frac{ \bE[|X_1| ^3]}{\sqrt{n} (\bE[X_1^2])^{3/2}} \\
\text{ for all } x \geq 0.
\end{multline}

%another subsubsection
 \subsubsection{Bounding $|\overline{\Phi}(x a_n(x)) - \overline{\Phi}(x)|$ } \label{sec:anotheranothersubsection}
 First write  the inequality
\begin{equation*}
|x a_{n, x} - x| = 
\left|\frac{ (a_{n, x}^2 - 1)x}{a_{n,x } +1}\right| = 
\left| \left( \frac{1 - x^2}{n + x^2 -1} \right) \left(\frac{x}{ a_{n, x} + 1}   \right)\right| 
\leq  \frac{(1 + x^2)x}{(n -1) (a_{n, x} +1)};
\end{equation*}
the prior inequality in turns implies, via Taylor's theorem, 
\begin{multline} \label{prebridge1}
|\Phi(x a_{n, x}) - \Phi(x)| \leq \phi \Big(x (a_{n, x} \wedge 1)  \Big) \Bigl|xa_{n, x} - x \Bigr|\\
 \leq  \phi \Big(x (a_{n, x} \wedge 1) \Big)\frac{(1 + x^2)x}{(n -1) (a_{n, x} +1)} =\frac{(1 + x^2)x}{\sqrt{2\pi}(n -1) (a_{n, x} +1)} \exp\left( \frac{-x^2 (a_{n, x} \wedge 1)^2}{2}\right)  .
\end{multline}
by the mean-value theorem. 
At the same time, we also have 
\begin{equation} \label{prebridge2}
|\Phi(x a_{n, x} ) - \Phi(x)| \leq \barPhi(x a_{n, x} ) +\barPhi(x)
 \leq  \exp\left(\frac{- x^2  (a_{n, x} \wedge 1)^2}{2} \right)
\end{equation}
by the typical normal tail bound; see \citet[Eqn. (2.11)]{chen2011normal} for instance. 
Combining \eqref{prebridge1} and \eqref{prebridge2}, we have
\begin{equation} \label{triangle_bridge_2nd_part_bdd_prelim}
|\Phi(x a_{n, x}) - \Phi(x)| \leq \\
\min \left(  \frac{(1 + x^2)x}{\sqrt{2\pi}(n -1) (a_{n, x} +1)} ,  1  \right)\exp\left( \frac{-x^2 (a_{n, x} \wedge 1)^2}{2}\right).
\end{equation}
Now, for the range $0 \leq x \leq n^{1/6}$, from \eqref{triangle_bridge_2nd_part_bdd_prelim} and \eqref{lower_upper_bdd_anx} one get
\begin{equation}\label{bridge1}
|\Phi(x a_{n, x}) - \Phi(x)| 
\leq  \frac{(1 + n^{1/3})n^{1/6}}{\sqrt{\pi} (1 + \sqrt{2}) (n-1)} \exp\Big( \frac{ -x^2}{4}\Big)
 \leq \frac{C}{\sqrt{n}}  \exp\Big( \frac{ -x^2}{4}\Big) \text{ for }  0 \leq x \leq n^{1/6}.
\end{equation}
For the range $n^{1/6} < x \leq n^{1/2}$,  since $a_{n, n^{1/6}} \leq 1$,  from \eqref{triangle_bridge_2nd_part_bdd_prelim} one get
\begin{align}
|\Phi(x a_{n, x}) - \Phi(x)| &\leq \exp \Big( - \frac{ x^2 a_{n, x}^2}{4} \Big)  \exp \Big( - \frac{ x^2 a_{n, x}^2}{4} \Big)\notag\\
&= \underbrace{  \exp \Big( - \frac{ x^2 a_{n, x}^2}{4} + 3 \log ( x a_{n,x} )\Big) }_{\leq C}\frac{1}{x^3 a_{n, x}^3}  \exp \Big( - \frac{ x^2 a_{n, x}^2}{4} \Big)\notag\\
&\leq \frac{C}{n^{1/2} a_{n, x}^3}  \exp \Big( - \frac{ x^2 a_{n, x}^2}{4} \Big) \notag\\
&\leq \frac{C}{n^{1/2} a_{n, n^{1/2}}^3}  \exp \Big( - \frac{ x^2 a_{n, n^{1/2}}^2}{4} \Big)\notag\\
& \leq  \frac{ 2 ^{3/2}C}{\sqrt{n}} \exp \Big( - \frac{ x^2}{8} \Big) \text{ for } n^{1/6} < x \leq n^{1/2} \label{bridge2},
\end{align}
where the last inequality uses \eqref{lower_upper_bdd_anx}. For the range $x > n^{1/2}$, 
using that $x a_{n,x}$ is increasing in $x$ from \eqref{ascending_prop_sn} again, from \eqref{triangle_bridge_2nd_part_bdd_prelim} we get that
\begin{equation} \label{bridge3}
|\Phi(x a_{n, x}) - \Phi(x)| \leq \exp \Big(  - \frac{ x^2 a_{n, x}^2}{2} \Big) \leq \exp \Big(  - \frac{ n  a_{n, n^{1/2}}^2}{2} \Big) \leq \exp \big( - n/4\big) \text{ for } x > n^{1/2},
\end{equation}
where the last inequality again uses \eqref{lower_upper_bdd_anx}. Combining \eqref{bridge1}, \eqref{bridge2} and \eqref{bridge3}, we get
\begin{equation} \label{whole_bridge}
|\Phi(x a_{n, x}) - \Phi(x)| \leq  \exp\Big(-\frac{n}{4}\Big) +  \frac{ C}{\sqrt{n}} \exp \Big( - \frac{ x^2}{8} \Big)  \text{ for } x\geq 0 .
\end{equation}

Lastly, combining \eqref{tri_ineq_bridge}, \eqref{pre_bridge} and \eqref{whole_bridge},  we get
\[
|P(T_{student} > x) - \barPhi(x)| \leq C_1\exp \Bigg( \frac{-  c_1 n ( \bE[X_1^2])^3}{ (\bE[X_1^3])^2} \Bigg) + \frac{C_2}{e^{c_2x^2}} \frac{ \bE[|X_1| ^3]}{\sqrt{n} (\bE[X_1^2])^{3/2}} \text{ for all } x \geq 0
\]
because $\|X_1\|_3/\|X_1\|_2 \geq 1$, and \thmref{nonunif_BE_sn_sum}  for $T_{student}$ is proved.

\section*{Acknowledgments}
This research is partially supported by National Nature Science Foundation of China NSFC 12031005 and Shenzhen Outstanding Talents Training Fund, China. 

%\section*{Supplement}
%The supplementary materials \citep{leungshao_nonunif_supp} contain our appendices, which include technical lemmas and the proofs for \lemsref{bdd_for_remnants}-\lemssref{non_integral_moment_bdd_u_stat}.

\section*{Appendices}
\appendix

These appendices are organized as follows: \appref{tech_lemmas} first list and prove some  supporting lemmas, with the remaining appendices covering the proofs for:

\begin{itemize}
\item  \appref{remnant_pf}: \lemsref{bdd_for_remnants}
\item  \appref{Stein_pf}: \lemref{intermediate} 
\item  \appref{otherproofs}:  \lemsref{chernoff_lower_tail_bdd_U_stat} and \lemssref{non_integral_moment_bdd_u_stat}
\end{itemize}

\section{Technical lemmas} \label{app:tech_lemmas}

This appendix lists out a few sets of useful results that, for the most part,  have already been established  in our related work \citet{leungshaounif}, except for \lemref{delta1_moment_bdd}.
For any $x \in \mathbb{R}$, recall that the Stein equation \citep{stein1972bound}
\begin{equation} \label{steineqt}
f'(w) - wf(w) = I(w \leq x) - \Phi(x),
\end{equation}
has the unique bounded solution $f(w) = f_x(w)$ of the form
\begin{equation} \label{fx}
f_x (w) \equiv 
  \begin{cases} 
  \sqrt{2 \pi} e^{w^2/2} \Phi(w)\bar{\Phi}(x) &  w \leq x ;\\
 \sqrt{2 \pi} e^{w^2/2} \Phi(x) \bar{\Phi}(w)        &  w > x;
     \end{cases}
\end{equation}
see \citet[p.14]{chen2011normal}. Since $f_x$ as in \eqref{fx} is not differentiable at $w = x$, we customarily define 
\begin{equation}\label{fx'atx}
f_x'(x) \equiv xf_x(x) +1- \Phi(x),
\end{equation}
so \eqref{steineqt} holds for all $w \in \mathbb{R}$.

\subsection{Properties of the solution to Stein's equation}

This section provides some useful bounds related to $f_x$ in \eqref{fx}. We will define 
\begin{equation} \label{gx_def}
g_x(w) \equiv (wf_x(w))'= f_x(w) + w f_x'(w),
\end{equation}
where  it is understood that $g_x(x) \equiv  f_x(x) + xf_x'(x)$ for $f_x'(x)$  defined in \eqref{fx'atx}. Precisely,
%\begin{equation} \label{fx}
%f_x (w) = 
%  \begin{cases} 
%  \sqrt{2 \pi} e^{w^2/2} \Phi(w)\bar{\Phi}(x) &  \text{ for }\quad w \leq x\\
% \sqrt{2 \pi} e^{w^2/2} \Phi(x) \bar{\Phi}(w)      & \text{ for }\quad w > x
%     \end{cases};
%\end{equation}
\begin{equation} \label{fx'}
f_x' (w) = 
  \begin{cases} 
\left(  \sqrt{2 \pi}w e^{w^2/2} \Phi(w) + 1\right)\bar{\Phi}(x)& \text{ for }\quad w \leq  x;\\
\left(  \sqrt{2 \pi}w e^{w^2/2} \bar{\Phi}(w) - 1\right) \Phi(x)        & \text{ for }\quad w > x;
     \end{cases}
\end{equation}
\begin{equation} \label{wfx'}
g_x(w) = 
  \begin{cases} 
 \sqrt{2 \pi}\bar{\Phi}(x) \left( (1 + w^2)  e^{w^2/2} \Phi(w) + \frac{w}{\sqrt{2\pi}}\right) & \text{ for }\quad w \leq x;\\
  \sqrt{2\pi}\Phi(x) \left(   (1 + w^2) e^{w^2/2}    \bar{\Phi}(w) - \frac{w}{\sqrt{2 \pi}} \right)  &\text{ for }\quad  w > x.
     \end{cases}
\end{equation}

\begin{lemma}[Uniform bounds] \label{lem:helping_unif}
For $f_x$ and $f_x'$, the following bounds are true:
\[
 |f_x'(w)| \leq 1, \quad  \quad  0 < f_x(w) \leq 0.63 \quad  \text{ and }\quad 0 \leq g_x(w) \quad \text{ for all } \quad w, x \in \bR.
\]
Moreover, for any $x \in [0, 1]$, $g_x(w) \leq 2.3$ for all $w \in \bR$.
\end{lemma}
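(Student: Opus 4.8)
\emph{Plan.} I would establish the four assertions in turn, the first three being essentially classical (so I would mostly cite and sketch), and the last needing only the sign information extracted along the way.

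The bounds on $f_x$ and $f_x'$ come first. Positivity, $f_x(w)>0$, is immediate from the closed form \eqref{fx}, since $\Phi$ and $\barPhi$ are strictly positive on $\bR$. For the upper bound, note that on $\{w\le x\}$ one has $\barPhi(x)\le\barPhi(w)$, so $f_x(w)=\sqrt{2\pi}e^{w^2/2}\Phi(w)\barPhi(x)\le\sqrt{2\pi}e^{w^2/2}\Phi(w)\barPhi(w)$, and the symmetric estimate $\Phi(x)\le\Phi(w)$ handles $\{w>x\}$; it therefore suffices to check that $r(w):=\sqrt{2\pi}e^{w^2/2}\Phi(w)\barPhi(w)$ satisfies $\sup_{w\in\bR}r(w)=\sqrt{2\pi}/4<0.63$, which is a one-variable exercise (the function is even, vanishes at $\pm\infty$, and $r'$ vanishes only at $w=0$, where $r(0)=\sqrt{2\pi}/4$). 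Likewise, $|f_x'(w)|\le1$ follows from the explicit expression \eqref{fx'} together with the Mills ratio inequality $\barPhi(t)\le\phi(t)/t$ for $t>0$ (applied via $\Phi(w)=\barPhi(|w|)$ on $\{w<0\}$): on $\{w\le x\}$ this forces $\sqrt{2\pi}we^{w^2/2}\Phi(w)+1\ge0$, and then using also $\barPhi(x)\le\barPhi(w)$ and once more the Mills inequality gives $f_x'(w)\le1$; the region $\{w>x\}$ is handled identically with $\Phi$ and $\barPhi$ swapped, where instead $-1\le f_x'(w)\le0$. (These are \citet[Lemma~2.3]{chen2011normal}.) I would record the two sign facts just obtained, $f_x'\ge0$ on $\{w\le x\}$ and $f_x'\le0$ on $\{w>x\}$, for use below.

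For $g_x(w)\ge0$ I would argue directly from the closed forms \eqref{wfx'}: on $\{w\le x\}$ the bracketed quantity is $(1+w^2)e^{w^2/2}\Phi(w)+w/\sqrt{2\pi}$, which is manifestly nonnegative for $w\ge0$ and, for $w<0$, equals $(1+w^2)e^{w^2/2}\barPhi(|w|)-|w|/\sqrt{2\pi}$; this is $\ge0$ by the lower Mills bound $\barPhi(t)\ge t\phi(t)/(1+t^2)$, which gives $(1+w^2)e^{w^2/2}\barPhi(|w|)\ge|w|/\sqrt{2\pi}$. The region $\{w>x\}$ is symmetric and reduces to the same inequality. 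Then, for $x\in[0,1]$, I would bound $g_x$ through $g_x(w)=f_x(w)+wf_x'(w)$: since $0<f_x(w)\le0.63$, it remains to show $wf_x'(w)\le1$. If $w\le0$ then, since $x\ge0$, we are in the region $\{w\le x\}$, so $f_x'(w)\ge0$ and hence $wf_x'(w)\le0$; if $w>x$ then $w>0$ and $f_x'(w)\le0$, so again $wf_x'(w)\le0$; and in the remaining case $0<w\le x$ we have $0<w\le x\le1$ and $0\le f_x'(w)\le1$, so $wf_x'(w)\le xf_x'(w)\le x\le1$. Thus $g_x(w)\le0.63+1<2.3$ for all $w\in\bR$ and $x\in[0,1]$. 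I do not expect any deep obstacle: the only real content is the small bundle of Mills-ratio estimates behind the size and sign of $f_x'$, all classical, and the restriction $x\in[0,1]$ enters at exactly one point, the sub-case $0<w\le x$ above. The place where genuine effort would be required is if one wanted the optimal constant instead of $2.3$, which would demand a careful maximization over $[0,1]$ of $\sqrt{2\pi}(1+x^2)e^{x^2/2}\Phi(x)\barPhi(x)+x\barPhi(x)$, where verifying monotonicity on $[0,1]$ becomes the delicate step.
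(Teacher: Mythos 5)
The paper states this lemma without proof; it is the standard collection of estimates on the Stein solution, with the first two bounds being \citet[Lemma~2.3]{chen2011normal}. Your argument is correct and follows the canonical route: the Mills-ratio inequalities $t\phi(t)/(1+t^2)\le\barPhi(t)\le\phi(t)/t$ deliver both $|f_x'|\le1$ and the sign facts ($f_x'\ge0$ on $\{w\le x\}$, $f_x'\le0$ on $\{w>x\}$), which in turn give $g_x\ge0$; and combining $0<f_x\le0.63$ with those signs in $g_x=f_x+wf_x'$ cleanly yields $g_x(w)\le 0.63+1<2.3$ for $x\in[0,1]$, in fact the sharper constant $1.63$. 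The only point you pass over quickly is that $\sup_{w}r(w)=r(0)=\sqrt{2\pi}/4$ for $r(w)=\sqrt{2\pi}e^{w^2/2}\Phi(w)\barPhi(w)$; this is indeed a standard one-variable verification (equivalently, $f_x$ is unimodal with peak at $w=x$ by the sign of $f_x'$, and then one maximizes $r(x)=f_x(x)$ over $x$), but it deserves at least a sentence since it underlies the constant $0.63$.
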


\begin{lemma}[Nonuniform bounds when $x \geq 1$] \label{lem:helping}
For $x\geq 1$, 
the following are true: 

 \begin{equation} \label{fxbdd}
 f_x (w) \leq  
  \begin{cases} 
  1.7 e^{-x} &  \text{ for }\quad w \leq x - 1;\\
  1/x     &    \text{ for }\quad x  - 1 < w \leq x; \\
   1/w     &   \text{ for }\quad x  < w; 
     \end{cases}
     \end{equation}
     and
    \begin{equation} \label{fx'bdd}
      | f_x'(w)| \leq  
  \begin{cases} 
    e^{1/2 -x } &   \text{ for }\quad w \leq x - 1;\\
   1       &    \text{ for }\quad x  - 1 < w \leq x ;\\
      (1 + x^2)^{-1}    &  \text{ for }\quad w > x.  
     \end{cases}
\end{equation}
Moreover, $g_x(w) \geq 0$ for all $w \in \mathbb{R}$, 
\begin{equation}  \label{gbdd}
 g_x(w) \leq  
  \begin{cases} 
    1.6 \;\bar{\Phi}(x) &  \text{ for }\quad w \leq 0;\\
   1/w      &  \text{ for }\quad w > x,
     \end{cases}
\end{equation}
and $g_x(w)$ is increasing for $0 \leq w  \leq x$ with 
\[
g_x(x - 1) \leq x e^{1/2 -x} \;\ \text{ and } \;\ g_x(x)  \leq  x+ 2.
\]

\end{lemma}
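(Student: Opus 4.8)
The plan is to work directly from the closed forms \eqref{fx}, \eqref{fx'} and \eqref{wfx'} of $f_x$, $f_x'$ and $g_x=(wf_x)'$, and to reduce every claimed inequality to an elementary estimate for the rescaled Mills-ratio function $M(w)\equiv\sqrt{2\pi}e^{w^2/2}\barPhi(w)$. Since $\sqrt{2\pi}e^{w^2/2}\Phi(w)=M(-w)$, the formulas become $f_x(w)=\barPhi(x)M(-w)$ and $g_x(w)=\barPhi(x)\bigl((1+w^2)M(-w)+w\bigr)$ for $w\le x$, and $f_x(w)=\Phi(x)M(w)$ and $g_x(w)=\Phi(x)\bigl((1+w^2)M(w)-w\bigr)$ for $w>x$. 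I would first record the facts about $M$ that drive everything: (a) $M$ is strictly decreasing on $\bR$ (from $M'(w)=wM(w)-1$ together with the classical two-sided Mills bounds $\frac{w}{1+w^2}<M(w)<\frac1w$ for $w>0$); (b) $w\mapsto wM(-w)+1$ is positive and increasing on $\bR$ (its derivative is $(1+w^2)M(-w)+w$, nonnegative by the lower Mills bound); and (c) for $x\ge1$ one has $\barPhi(x)\le\frac{1}{x\sqrt{2\pi}}e^{-x^2/2}\le\frac{1}{\sqrt{2\pi}}e^{-x^2/2}$.

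For the bounds on $f_x$ in \eqref{fxbdd}: since $M(-\cdot)$ is increasing, on $\{w\le x-1\}$ we get $f_x(w)=\barPhi(x)M(-w)\le\barPhi(x)M(1-x)$, and bounding $M(1-x)=\sqrt{2\pi}e^{(x-1)^2/2}\Phi(x-1)\le\sqrt{2\pi}e^{(x-1)^2/2}$ and using (c) yields $f_x(w)\le\frac1x e^{1/2-x}\le1.7\,e^{-x}$; on $\{x-1<w\le x\}$ the same monotonicity gives $f_x(w)\le\barPhi(x)M(-x)\le\sqrt{2\pi}e^{x^2/2}\cdot\frac{1}{x\sqrt{2\pi}}e^{-x^2/2}=1/x$; and on $\{w>x\}$, $f_x(w)=\Phi(x)M(w)\le M(w)<1/w$. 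For \eqref{fx'bdd}: on $\{w>x\}$, $f_x'(w)=\Phi(x)\bigl(wM(w)-1\bigr)$ with $wM(w)\in\bigl(\frac{w^2}{1+w^2},1\bigr)$, so $|f_x'(w)|=\Phi(x)\bigl(1-wM(w)\bigr)\le\frac{1}{1+w^2}\le\frac{1}{1+x^2}$; on $\{x-1<w\le x\}$ the bound $|f_x'(w)|\le1$ is already the uniform estimate of \lemref{helping_unif}; on $\{w\le x-1\}$, $f_x'(w)=\barPhi(x)\bigl(wM(-w)+1\bigr)\ge0$ is increasing in $w$ by (b), hence $\le\barPhi(x)\bigl((x-1)M(1-x)+1\bigr)$, and plugging in $M(1-x)\le\sqrt{2\pi}e^{(x-1)^2/2}$ and (c) gives $f_x'(w)\le\frac{x-1}{x}e^{1/2-x}+\frac{1}{x\sqrt{2\pi}}e^{-x^2/2}\le e^{1/2-x}$, the last step using $\frac{1}{\sqrt{2\pi}}e^{-(x-1)^2/2}\le1$.

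For the statements about $g_x$: nonnegativity of $g_x$ on $\bR$ is in \lemref{helping_unif} (and also follows from the lower Mills bound). On $\{w>x\}$, $g_x(w)=\Phi(x)\bigl((1+w^2)M(w)-w\bigr)\le(1+w^2)M(w)-w<(1+w^2)\frac1w-w=1/w$. On $\{w\le0\}$, writing $u=-w\ge0$, $g_x(w)=\barPhi(x)\,q(u)$ with $q(u)\equiv(1+u^2)M(u)-u$; I would show $q$ is decreasing on $[0,\infty)$, so that $g_x(w)\le\barPhi(x)\,q(0)=\sqrt{\pi/2}\,\barPhi(x)\le1.6\,\barPhi(x)$. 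Monotonicity of $g_x$ on $[0,x]$ follows from $\frac{d}{dw}\bigl((1+w^2)M(-w)+w\bigr)=2wM(-w)+(1+w^2)\bigl(wM(-w)+1\bigr)+1>0$ for $w\ge0$. Finally, by direct substitution together with (c), $g_x(x-1)=\barPhi(x)\bigl((1+(x-1)^2)M(1-x)+(x-1)\bigr)\le\bigl(x-1+\frac1x\bigr)e^{1/2-x}\le x\,e^{1/2-x}$ (again using $e^{-(x-1)^2/2}\le1$), and $g_x(x)=(1+x^2)M(-x)\barPhi(x)+x\barPhi(x)\le\frac{1+x^2}{x}+\frac{1}{\sqrt{2\pi}}e^{-x^2/2}\le x+2$.

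The step I expect to be the main obstacle is the bound $g_x(w)\le1.6\,\barPhi(x)$ for $w\le0$, i.e. verifying that $q(u)=(1+u^2)M(u)-u$ never exceeds $1.6$ on $[0,\infty)$: crude estimates such as $M(u)\le M(0)=\sqrt{\pi/2}$ are too lossy once multiplied by $1+u^2$, so one is forced to prove $q$ is actually decreasing, which amounts to the sharp Mills-ratio inequality $M(u)<\frac{u^2+2}{u(u^2+3)}$ (equivalently $q'(u)<0$), provable either by invoking a known upper bound like $\barPhi(u)<\frac{4\phi(u)}{3u+\sqrt{u^2+8}}$ and squaring, or by a short direct calculus argument on the auxiliary function. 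Everything else is routine bookkeeping once the monotonicity facts (a)–(b) and the Mills-ratio bounds (c) are in place.
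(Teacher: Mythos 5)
Your proposal is correct, and every numbered step I checked holds: the reduction $f_x(w)=\barPhi(x)M(-w)$, $g_x(w)=\barPhi(x)\bigl((1+w^2)M(-w)+w\bigr)$ for $w\le x$ (and the mirror formulas for $w>x$), the monotonicity facts (a)–(b), and the constants $e^{1/2}<1.7$ and $\sqrt{\pi/2}\approx1.2533<1.6$. The one inequality you flag as the obstacle, $q'(u)=uM(u)(u^2+3)-(u^2+2)<0$, i.e.\ $M(u)<\frac{u^2+2}{u(u^2+3)}$, does indeed follow from Sampford's bound $M(u)<\frac{4}{3u+\sqrt{u^2+8}}$: after clearing denominators, one reduces to $u^2(u^2+6)^2\le(u^2+2)^2(u^2+8)$, and with $t=u^2$ both sides are cubics that differ by the constant $32>0$, so the implication is exact.

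The paper itself does not prove this lemma — the appendix explicitly defers all of \lemsref{helping_unif}, \lemssref{helping}, \lemssref{expect_fxWB}, and similar facts about the Stein solution to the authors' companion work \citet{leungshaounif}, so there is no proof here to compare against. Your argument is a valid, self-contained derivation via the rescaled Mills ratio $M(w)=\sqrt{2\pi}e^{w^2/2}\barPhi(w)$, reading all four claimed estimates off the closed forms \eqref{fx}, \eqref{fx'}, \eqref{wfx'} together with $M'(w)=wM(w)-1$, the two-sided Mills bounds, and Sampford's sharpening. This is exactly the kind of elementary calculus that such Stein-solution lemmas are usually proved by, and nothing is missing.

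One tiny stylistic note: in the $w\le x-1$ case of \eqref{fx'bdd}, you should remark (as you implicitly use) that $f_x'(w)=\barPhi(x)(wM(-w)+1)>0$ there, so $|f_x'(w)|=f_x'(w)$ and monotonicity in $w$ from (b) lets you evaluate at the right endpoint $w=x-1$; this is in fact what you did, so it is only a presentational point.
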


%The proofs of \lemsref{helping_unif} and \lemssref{helping} can be found in \citet{leungshaounif}, which  
% repeatedly use the  well-known inequality \citep[p.16 \& 38]{chen2011normal}
%\begin{equation} \label{qfn_bdd}
%\frac{w e^{-w^2/2}}{ (1 + w^2)\sqrt{2\pi}}\leq \bar{\Phi}(w)\leq \min \left( \frac{1}{2}, \frac{1}{w \sqrt{2 \pi}}\right) e^{-w^2/2} \text{ for } w > 0.
%\end{equation}

\begin{lemma}[Bound on expectation of  $f_x(W_b)$ when $x \geq 1$]   \label{lem:expect_fxWB}
Let $\xi_1, \dots, \xi_n$ be independent random variables with $\bE[\xi_i] = 0$ for all $i = 1, \dots, n$ and $\sum_{i=1}^n \bE[\xi_i^2] \leq 1$, and define $\xi_{b, i} = \xi_i I(|\xi_i| \leq 1) + 1 I(\xi_i >1) - 1  I(\xi_i <-1)$  and $W_b =  \sum_{i=1}^n \xi_{b, i}$.
For $x \geq 1$, then there exists an absolute constant $C >0$ such that 
\[
|\bE[f_x (W_b)]| \leq C e^{-x}.
\]
\end{lemma}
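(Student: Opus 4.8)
The plan is to combine the pointwise upper bounds on $f_x$ from \lemref{helping} with a crude one-step exponential (Chernoff) tail bound for the censored sum $W_b$. Since $f_x(w) > 0$ for all $w$ (as recorded in \lemref{helping_unif}), we have $|\bE[f_x(W_b)]| = \bE[f_x(W_b)]$, and I would split this expectation according to whether $W_b \leq x-1$ or $W_b > x-1$. On $\{W_b \leq x-1\}$, the first line of \eqref{fxbdd} gives $f_x(W_b) \leq 1.7\, e^{-x}$, so that portion contributes at most $1.7\, e^{-x}$. On $\{W_b > x-1\}$, the last two lines of \eqref{fxbdd} give $f_x(W_b) \leq 1/x$ when $x-1 < W_b \leq x$ and $f_x(W_b) \leq 1/W_b \leq 1/x$ when $W_b > x$; since $x \geq 1$ this yields $\bE[f_x(W_b)\, I(W_b > x-1)] \leq \tfrac{1}{x} P(W_b > x-1) \leq P(W_b > x-1)$. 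Thus the whole estimate reduces to proving $P(W_b > x-1) \leq C\, e^{-x}$.

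For the tail bound I would first record two elementary facts about the censored summands. Because $|\xi_{b,i}| \leq |\xi_i|$ pointwise, one has $\bE[\xi_{b,i}^2] \leq \bE[\xi_i^2]$ and hence $\sum_{i=1}^n \bE[\xi_{b,i}^2] \leq \sum_{i=1}^n \bE[\xi_i^2] \leq 1$; and because $\bE[\xi_i] = 0$ with $|\xi_{b,i} - \xi_i| \leq \xi_i^2\, I(|\xi_i|>1)$, one has $|\bE[\xi_{b,i}]| = |\bE[\xi_{b,i} - \xi_i]| \leq \bE[\xi_i^2\, I(|\xi_i|>1)]$, so $\sum_{i=1}^n |\bE[\xi_{b,i}]| \leq \sum_{i=1}^n \bE[\xi_i^2] \leq 1$. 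Next, using the elementary inequality $e^y \leq 1 + y + (e-2)y^2$, valid for $|y| \leq 1$, together with $1 + u \leq e^u$, and noting that $|\xi_{b,i}| \leq 1$ makes the choice $\lambda = 1$ admissible, I get
\[
\bE\big[e^{W_b}\big] = \prod_{i=1}^n \bE\big[e^{\xi_{b,i}}\big] \leq \prod_{i=1}^n \exp\!\Big( \bE[\xi_{b,i}] + (e-2)\,\bE[\xi_{b,i}^2] \Big) \leq \exp\!\big( 1 + (e-2) \big) = e^{\,e-1}.
\]
Markov's inequality then gives $P(W_b > x-1) \leq P(W_b \geq x-1) \leq e^{\,e-1} e^{-(x-1)} = e^{\,e}\, e^{-x}$. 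Adding the two contributions yields $\bE[f_x(W_b)] \leq (1.7 + e^{\,e})\, e^{-x}$, which is the assertion with $C = 1.7 + e^{\,e}$.

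The computation is essentially routine; the only point that needs a little care is extracting the decay rate $e^{-x}$ rather than merely $e^{-cx}$ for some $c < 1$. This is precisely why one wants the sharp single-step exponential estimate at $\lambda = 1$, exploiting the hard truncation $|\xi_{b,i}| \leq 1$, instead of a generic Bernstein-type inequality, which would lose a constant factor in the exponent. The bookkeeping $\sum_i |\bE[\xi_{b,i}]| \leq 1$ — needed because censoring destroys the centering of the summands — is likewise what keeps the exponent at $1 + (e-2)$ and hence the final rate at exactly $e^{-x}$.
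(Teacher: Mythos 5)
Your proposal is correct and follows essentially the same route as the paper: split $\bE[f_x(W_b)]$ at the threshold $W_b = x-1$, use $f_x \leq 1.7e^{-x}$ from \eqref{fxbdd} on the bulk, and control the tail piece by an exponential Chernoff/Markov bound on $W_b$. The only minor differences are cosmetic: the paper bounds the tail piece using the uniform bound $f_x \leq 0.63$ together with $I(W_b > x-1) \leq e^{W_b-(x-1)}$ and then cites \lemref{bennett_censored} for $\bE[e^{W_b}] \leq C$, whereas you use the nonuniform bound $f_x \leq 1/x \leq 1$ on the tail set and then derive the MGF bound $\bE[e^{W_b}] \leq e^{\,e-1}$ from scratch via $e^y \leq 1 + y + (e-2)y^2$ on $|y|\leq 1$ and the observation $\sum_i|\bE[\xi_{b,i}]| \leq 1$ (which is exactly \lemref{exp_xi_bi_bdd}); both of these are standard and equivalent in effect.
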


\begin{proof}[Proof of \lemref{expect_fxWB}]
From \eqref{fxbdd}  in \lemref{helping} and $|f_x| \leq 0.63$ in \lemref{helping_unif}, 
\begin{align*}
|\bE[f_x (W_b)]| 
&\leq 1.7 e^{-x }  + |\bE[f_x(W_b)I(W_b  > x - 1)] | \leq  1.7 e^{-x } +  e^{1 - x} 0.63\bE[e^{W_b }],
\end{align*}
then apply the Bennett inequality in \lemref{bennett_censored} below.
\end{proof}

%\begin{lemma} The following algebraic inequality is true:
%\[
%1 + s/2 - \frac{2|s|^{3/2}  }{3}\leq (1 + s)^{1/2} \text{ for all } s \geq -1.
%\]
%\end{lemma}
%\begin{proof} For $s \geq 0$, we let $t(s) = (1 + s)^{1/2} - 1 - s/2 + \frac{2s^{3/2}  }{3}$, and   compute  
%\[
%\frac{\partial}{\partial s} t(s)= \frac{1}{2}(1+s)^{-1/2} - \frac{1}{2} + s^{1/2}
%\]
%and 
%\[
%\frac{\partial^2}{\partial^2 s}t(s) = \frac{1}{2 \sqrt{s}} - \frac{1}{4 (1 +s)^{3/2}}.
%\]
%Since it is easily seen that $\frac{\partial^2}{\partial^2 s}t(s) > 0$ for all $s >0$ and $\frac{\partial}{\partial s} t(s)|_{s=0} = 0$, by the mean value theorem,  it must be the case that $\frac{\partial}{\partial s} t(s) \geq 0$ for all $s \geq 0$; so the lemma is true for $s \geq 0$.
%
%
%
%\end{proof}

\subsection{Bounds for the  censored summands $\xi_{b,i}$'s and their sum $W_b$}
The following bounds for the censored summands $\xi_{b,i}$'s and their sum $W_b$ will be useful.

\begin{lemma}[Bound on expectation of $\xi_{b, i}$] \label{lem:exp_xi_bi_bdd} Let $\xi_{b, i} = \xi_i I(|\xi_i| \leq 1) + 1 I(\xi_i >1) - 1  I(\xi_i <-1)$ with $\bE[\xi_i] = 0$. Then
\[
  \left|\bE[ \xi_{b, i}]\right| \leq   \bE[ \xi_i^2I(|\xi_i| > 1) ]  \leq \bE[|\xi_i|^3] \wedge\bE[ \xi_i^2 ]
\]
\end{lemma}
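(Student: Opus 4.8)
The plan is to use the zero-mean hypothesis $\bE[\xi_i]=0$ to rewrite $\bE[\xi_{b,i}]$ as the mean of the \emph{truncation error} $\xi_{b,i}-\xi_i$, and then bound that error pointwise on the truncation region $\{|\xi_i|>1\}$.

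First I would record the pointwise identity
\[
\xi_{b,i}-\xi_i = (1-\xi_i)\,I(\xi_i>1) + (-1-\xi_i)\,I(\xi_i<-1),
\]
which follows by subtracting $\xi_i = \xi_i I(|\xi_i|\le 1) + \xi_i I(\xi_i>1) + \xi_i I(\xi_i<-1)$ from the definition of $\xi_{b,i}$. A short case check—on $\{\xi_i>1\}$ the bracket is $1-\xi_i<0$ with absolute value $\xi_i-1=|\xi_i|-1$, and on $\{\xi_i<-1\}$ it is $-1-\xi_i=|\xi_i|-1>0$—then gives
\[
|\xi_{b,i}-\xi_i| = (|\xi_i|-1)\,I(|\xi_i|>1).
\]
Since $\bE[\xi_i]=0$ we have $\bE[\xi_{b,i}]=\bE[\xi_{b,i}-\xi_i]$, so Jensen's (triangle) inequality yields $|\bE[\xi_{b,i}]| \le \bE\big[(|\xi_i|-1)\,I(|\xi_i|>1)\big]$.

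Next I would derive the two stated bounds by elementary monotonicity on the set $\{|\xi_i|>1\}$. There $|\xi_i|-1 < |\xi_i| < |\xi_i|^2$, so $(|\xi_i|-1)\,I(|\xi_i|>1) \le \xi_i^2\,I(|\xi_i|>1)$ and hence $|\bE[\xi_{b,i}]| \le \bE[\xi_i^2 I(|\xi_i|>1)]$, the first inequality. For the second, I would bound $\bE[\xi_i^2 I(|\xi_i|>1)]$ in two ways: dropping the indicator gives $\bE[\xi_i^2 I(|\xi_i|>1)] \le \bE[\xi_i^2]$, while $|\xi_i|^2 \le |\xi_i|^3$ on $\{|\xi_i|>1\}$ gives $\bE[\xi_i^2 I(|\xi_i|>1)] \le \bE[|\xi_i|^3 I(|\xi_i|>1)] \le \bE[|\xi_i|^3]$; taking the minimum of the two gives $\bE[\xi_i^2 I(|\xi_i|>1)] \le \bE[|\xi_i|^3]\wedge\bE[\xi_i^2]$, finishing the proof.

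There is no real obstacle here; the only place requiring a moment's attention is the sign bookkeeping in the case analysis that establishes $|\xi_{b,i}-\xi_i| = (|\xi_i|-1)I(|\xi_i|>1)$—specifically remembering that on $\{\xi_i<-1\}$ one has $-1-\xi_i = |\xi_i|-1 > 0$. Everything after that is a one-line estimate exploiting that $t-1 \le t \le t^2 \le t^3$ for $t>1$.
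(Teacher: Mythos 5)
Your proof is correct. The paper itself does not reproduce a proof of this lemma (it is one of the supporting results imported from \citet{leungshaounif}), but the argument you give — rewrite $\bE[\xi_{b,i}]$ as $\bE[\xi_{b,i}-\xi_i]$ using $\bE[\xi_i]=0$, observe $|\xi_{b,i}-\xi_i|=(|\xi_i|-1)I(|\xi_i|>1)$, and then use $t-1\le t\le t^2\le t^3$ for $t>1$ — is the standard and essentially unique route, so there is nothing to flag.
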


\begin{lemma} [Bennett's inequality for a sum of censored random variables]  \label{lem:bennett_censored} 
Let $\xi_1, \dots, \xi_n$ be independent random variables with $\bE[\xi_i] = 0$ for all $i = 1, \dots, n$ and $\sum_{i=1}^n \bE[\xi_i^2] \leq 1$, and define $\xi_{b, i} = \xi_i I(|\xi_i| \leq 1) + 1 I(\xi_i >1) - 1  I(\xi_i <-1)$. 
For any $t >0$ and $W_b =  \sum_{i=1}^n \xi_{b, i}$, we have
\[
\bE[e^{t W_b}] \leq  \exp\left( e^{2t}/4 - 1/4 + t/2\right)
\]
\end{lemma}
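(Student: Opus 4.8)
The plan is to exploit independence to factorize $\bE[e^{tW_b}]=\prod_{i=1}^n\bE[e^{t\xi_{b,i}}]$, to bound each factor by a quadratic-type estimate that is available precisely because $|\xi_{b,i}|\le 1$ almost surely, and then to feed in the two quantitative facts we already control: the second-moment budget $\sum_{i=1}^n\bE[\xi_i^2]\le 1$ and the first-moment defect $|\bE[\xi_{b,i}]|\le\bE[\xi_i^2]$ provided by \lemref{exp_xi_bi_bdd}. Throughout $t>0$ is fixed.

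The crucial elementary inequality is that, for every $t>0$ and every $y\in[-1,1]$,
\[
e^{ty}\ \le\ 1 + ty + \frac{e^{2t}-1-2t}{4}\,y^2 .
\]
I would prove this by letting $\Gamma(y)$ be the right-hand side minus $e^{ty}$ and checking $\Gamma\ge 0$ on $[-1,1]$. One has $\Gamma(0)=\Gamma'(0)=0$, and $\Gamma''(y)=\tfrac12(e^{2t}-1-2t)-t^2e^{ty}$ is strictly decreasing in $y$ with $\Gamma''(0)\ge 0$ (since $e^{2t}\ge 1+2t+2t^2$), so $\Gamma$ is convex on a left interval containing $0$ and concave beyond some turning point $y_0\ge 0$; convexity together with the double zero at $0$ gives $\Gamma\ge 0$ on $[-1,0]$ and up to $y_0\wedge 1$, and on the remaining (concave) part it suffices to know the endpoint values are nonnegative — the only nontrivial check being $\Gamma(1)=\tfrac14\big((e^t-1)(e^t-3)+2t\big)\ge 0$, which holds because that bracket vanishes at $t=0$ and has derivative $2(e^t-1)^2\ge 0$. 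This scalar step is where the care is needed; note the familiar sharper coefficient $e^t-1-t$ also works pointwise, but replacing it by the larger $\tfrac14(e^{2t}-1-2t)$ is exactly what later lets the stray linear term collapse from $t$ to $t/2$.

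With this inequality in hand the rest is routine. Taking expectations gives $\bE[e^{t\xi_{b,i}}]\le 1 + t\,\bE[\xi_{b,i}] + \tfrac14(e^{2t}-1-2t)\,\bE[\xi_{b,i}^2]$. Since $\xi_{b,i}^2\le\xi_i^2$ pointwise we have $\bE[\xi_{b,i}^2]\le\bE[\xi_i^2]$, while $t\,\bE[\xi_{b,i}]\le t\,|\bE[\xi_{b,i}]|\le t\,\bE[\xi_i^2]$ by \lemref{exp_xi_bi_bdd}; hence $\bE[e^{t\xi_{b,i}}]\le 1+\big(t+\tfrac14(e^{2t}-1-2t)\big)\bE[\xi_i^2]$, and $1+u\le e^u$ upgrades this to $\bE[e^{t\xi_{b,i}}]\le\exp\!\big((t+\tfrac14(e^{2t}-1-2t))\,\bE[\xi_i^2]\big)$. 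Multiplying over $i$, using that $t+\tfrac14(e^{2t}-1-2t)\ge 0$ together with $\sum_{i=1}^n\bE[\xi_i^2]\le 1$, and finally simplifying $t+\tfrac14(e^{2t}-1-2t)=\tfrac14e^{2t}-\tfrac14+\tfrac t2$ yields the asserted bound.

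The main obstacle is therefore just the scalar inequality of the second paragraph: the whole computation hinges on landing the constant $1/4$ (hence the $-2t$ inside the bracket) so that the first-moment correction — which is only of size $\bE[\xi_i^2]$ and thus contributes at most $t\cdot 1$ after summation — is absorbed with the exact leftover $t/2$; a cruder quadratic bound would leave $\exp(e^{2t}/4-1/4+t)$ or a non-matching exponent.
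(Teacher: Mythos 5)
Your proof is correct. The paper defers this lemma's proof to the companion work \citet{leungshaounif}, so a step-by-step comparison is not possible here, but your structure — independence to factorize $\bE[e^{tW_b}]=\prod_i\bE[e^{t\xi_{b,i}}]$, a quadratic exponential bound valid for $|y|\le 1$, controlling the first-moment defect $|\bE[\xi_{b,i}]|\le\bE[\xi_i^2]$ via \lemref{exp_xi_bi_bdd}, and telescoping $\sum_i\bE[\xi_i^2]\le 1$ — is the natural route to a Bennett-type estimate for censored summands, and your verification of the scalar inequality $e^{ty}\le 1+ty+\tfrac14(e^{2t}-1-2t)y^2$ on $[-1,1]$ (double zero at $0$, monotone second derivative, endpoint check at $y=1$) is careful and sound. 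One tactical remark in your write-up is misleading, though: you suggest the specific coefficient $\tfrac14(e^{2t}-1-2t)$ is indispensable and that the sharper, more familiar choice $e^t-1-t$ would leave a non-matching exponent. In fact the sharper coefficient, pushed through the identical chain, yields $\bE[e^{tW_b}]\le\exp(e^t-1)$, which is strictly tighter and still implies the stated bound, since $\bigl(e^{2t}/4-1/4+t/2\bigr)-(e^t-1)$ vanishes at $t=0$ and has derivative $\tfrac12(e^t-1)^2\ge 0$. Your coefficient simply happens to land exactly on the exponent as written in the lemma; nothing in the argument requires it.
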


%\begin{lemma}[Sub-Gaussian lower tail bound for a sum of non-negative random variables] \label{lem:chernoff_lower_tail_bdd}
%Assume that $X_i  \geq 0$ with $\bE[X_i^2] < \infty$. Let $\mu_n = \sum_{i= 1}^n \bE[X_i]$ and $B_n^2 = \sum_{i=1}^n \bE [X^2]$. Then for $0 < x < \mu_n$, $S_n = \sum_{i=1}^n X_i$,
%\[
%P(S_n < x ) \leq \exp\left( - \frac{(\mu_n - x)^2}{2 B_n^2}\right).
%\]
%
%\end{lemma}
%\begin{proof}
%See Theorem 2.19 in \citet{victor2009self} for a proof.
%\end{proof}

\begin{lemma}[Exponential randomized concentration inequality  for a sum of  censored random variables] \label{lem:modified_RCI_bdd}
Let $\xi_1, \dots, \xi_n$ be independent random variables  with mean zero and finite second moments, and for each $i =1, \dots, n$, define 
\[
\xi_{b, i} = \xi_i I(|\xi_i| \leq 1) + 1 I(\xi_i >1) - 1  I(\xi_i <-1),
\]
an upper-and-lower censored version of $\xi_i$; moreover, let $W =\sum_{i=1}^n \xi_i $ and $W_b = \sum_{i=1}^n \xi_{b, i}$ be their corresponding sums, and $\Delta_1$ and $\Delta_2$ be two random variables on the same probability space. Assume there exists $c_1 > c_2 >0$ and  $\delta \in(0, 1/2)$ such that 
\[
\sum_{i=1}^n \bE[\xi_i^2] \leq c_1
\]
and 
\[
\sum_{i=1}^n \bE[|\xi_i| \min(\delta, |\xi_i|/2 )] \geq c_2.
\]
Then for any $\lambda \geq 0$, it is true that
\begin{align*}
&\bE[e^{\lambda W_b} I(\Delta_1 \leq W_b \leq \Delta_2)] \\
&\leq 
\left( \mathbb{E}\left[e^{2\lambda W_b}\right] \right)^{1/2}\exp\left( - \frac{c_2^2}{16 c_1 \delta^2}\right) \\
&+ \frac{2 e^{\lambda \delta}}{c_2} \Biggl\{ 2\sum_{i =1}^n \mathbb{E} [ |\xi_{b,  i}|e^{\lambda W_b^{(i)} } (|\Delta_1 - \Delta_1^{(i)}| + |\Delta_2 - \Delta_2^{(i)}|)] \\
& + 
\mathbb{E}[|W_b|e^{\lambda W_b }(|\Delta_2 - \Delta_1| + 2 \delta)]\\
& +   \sum_{i=1}^n  \Big|\bE[ \xi_{b, i}]\Big| \bE[e^{\lambda W_b^{(i)} }(|\Delta_2^{(i)} - \Delta_1^{(i)}| + 2 \delta)]\Biggr\} ,
\end{align*}
where $\Delta_1^{(i)}$ and $\Delta_2^{(i)}$ are any random variables on the same probability space such that $\xi_i$ and $(\Delta_1^{(i)}, \Delta_2^{(i)},  W^{(i)}, W_b^{(i)})$ are independent, where $W^{(i)} = W - \xi_i$ and $W_b^{(i)} = W_b - \xi_{b, i}$. In particular, if $\sum_{i=1}^n \bE[\xi_i^2] = 1$, one can take 
\[
\delta = \frac{\beta_2 + \beta_3}{4}, \quad \lambda = \frac{1}{2}, \quad c_1 = 1, \quad c_2 = \frac{1}{4},
\]
where $\beta_2 \equiv \sum_{i=1}^n \bE[\xi_i^2 I(|\xi_i|>1)]$ and $\beta_3 \equiv \sum_{i=1}^n \bE[\xi_i^3 I(|\xi_i|\leq1)]$.
 \end{lemma}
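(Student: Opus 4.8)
\emph{Proof strategy for \lemref{modified_RCI_bdd}.} This is a randomized concentration inequality, and the plan is to adapt the argument used for the uniform bounds in \citet{leungshaounif} by carrying the weight $e^{\lambda W_b}$ through every step and retaining the contribution of the censoring bias $\bE[\xi_{b,i}]\ne 0$. We may assume $\Delta_1\le\Delta_2$, since the left-hand side is $0$ otherwise. On $\{\Delta_1\le\Delta_2\}$ we dominate the indicator by a Lipschitz surrogate: let $f=f_{\Delta_1,\Delta_2,\delta}$ be the random function whose derivative is the trapezoid equal to $1$ on $[\Delta_1,\Delta_2]$, falling linearly to $0$ over the width-$\delta$ ramps $[\Delta_1-\delta,\Delta_1]$ and $[\Delta_2,\Delta_2+\delta]$, and $0$ outside; then $0\le f'\le1$, $f'$ is $\delta^{-1}$-Lipschitz, $f'\ge I(\Delta_1\le\cdot\le\Delta_2)$, $|f|\le\tfrac{1}{2}(|\Delta_2-\Delta_1|+2\delta)$, and $|f_{\Delta_1,\Delta_2,\delta}(w)-f_{\Delta_1',\Delta_2',\delta}(w)|\le|\Delta_1-\Delta_1'|+|\Delta_2-\Delta_2'|$ for all $w$. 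Hence $\bE[e^{\lambda W_b}I(\Delta_1\le W_b\le\Delta_2)]\le\bE[e^{\lambda W_b}f'(W_b)]$, and it remains to bound the right-hand side.

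For each $i$ write $W_b=W_b^{(i)}+\xi_{b,i}$ and introduce the nonnegative Stein kernels $\hat K_i(t)=\xi_i\bigl(I(0\le t<\xi_{b,i})-I(\xi_{b,i}\le t<0)\bigr)\ge0$, for which $\int_{\bR}\hat K_i(t)f'(W_b^{(i)}+t)\,dt=\xi_i\bigl(f(W_b)-f(W_b^{(i)})\bigr)$ and whose truncated masses $T_i:=\int_{|t|<\delta}\hat K_i(t)\,dt=|\xi_i|\min(\delta,|\xi_{b,i}|)$ obey $T_i\ge|\xi_i|\min(\delta,|\xi_i|/2)$ (this is exactly why the hypothesis carries the factor $\tfrac{1}{2}$: using $\delta<1/2$, the inequality holds both when $|\xi_i|\le1$ and when $|\xi_i|>1$) and $T_i^2\le\delta^2\xi_i^2$. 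With $T=\sum_iT_i$ we thus have $\bE[T]\ge c_2$ by hypothesis and $\sum_i\bE[T_i^2]\le\delta^2\sum_i\bE[\xi_i^2]\le\delta^2c_1$. We split
\[
\bE[e^{\lambda W_b}f'(W_b)]=\bE[e^{\lambda W_b}f'(W_b)I(T<c_2/2)]+\bE[e^{\lambda W_b}f'(W_b)I(T\ge c_2/2)].
\]
For the first term, $f'\le1$ and Cauchy--Schwarz give the upper bound $(\bE[e^{2\lambda W_b}])^{1/2}(P(T<c_2/2))^{1/2}$; since $T$ is a sum of independent nonnegative variables with $\bE[T]\ge c_2$ and $\sum_i\bE[T_i^2]\le\delta^2c_1$, the one-sided exponential inequality for sums of nonnegatives (obtained from $e^{-u}\le1-u+u^2/2$ for $u\ge0$, in the spirit of \citet[Theorem 2.19]{victor2009self}) yields $P(T<c_2/2)\le\exp(-c_2^2/(8c_1\delta^2))$, producing the first term $(\bE[e^{2\lambda W_b}])^{1/2}\exp(-c_2^2/(16c_1\delta^2))$ of the stated bound.

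For the second term, $I(T\ge c_2/2)\le\tfrac{2}{c_2}T$ reduces matters to bounding $\tfrac{2}{c_2}\sum_i\bE[e^{\lambda W_b}f'(W_b)T_i]$. Expanding $T_i=\int_{|t|<\delta}\hat K_i(t)\,dt$ and replacing the weighted factor evaluated at $W_b$ by the corresponding quantity at $W_b^{(i)}+t$ costs the factor $e^{\lambda\delta}$ (as $\lambda\ge0$, $|t|<\delta$) when the weight is pulled out to $e^{\lambda W_b^{(i)}}$; the resulting replacement errors, combined with the leave-one-out manipulation of $\sum_i\bE[e^{\lambda W_b^{(i)}}\xi_i(f(W_b)-f(W_b^{(i)}))]$ — in which $\bE[\xi_i]=0$ causes the $f(W_b^{(i)})$-part to vanish once $\Delta_1,\Delta_2$ are swapped for the independent copies $\Delta_1^{(i)},\Delta_2^{(i)}$ — produce exactly the three remaining quantities in the bound: the window term $\bE[|W_b|e^{\lambda W_b}(|\Delta_2-\Delta_1|+2\delta)]$ via $|f|\le\tfrac{1}{2}(|\Delta_2-\Delta_1|+2\delta)$, the replacement term $\sum_i\bE[|\xi_{b,i}|e^{\lambda W_b^{(i)}}(|\Delta_1-\Delta_1^{(i)}|+|\Delta_2-\Delta_2^{(i)}|)]$, and the bias term $\sum_i|\bE[\xi_{b,i}]|\bE[e^{\lambda W_b^{(i)}}(|\Delta_2^{(i)}-\Delta_1^{(i)}|+2\delta)]$, all carrying $2e^{\lambda\delta}/c_2$. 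The special-case instantiation $\delta=(\beta_2+\beta_3)/4$, $\lambda=1/2$, $c_1=1$, $c_2=1/4$ is then justified by verifying $\sum_i\bE[|\xi_i|\min(\delta,|\xi_i|/2)]\ge1/4$ under $\sum_i\bE[\xi_i^2]=1$.

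I expect the main obstacle to be this last step: routing the exponential weight through the leave-one-out differences so that each error term emerges carrying $e^{\lambda W_b^{(i)}}$ (the form needed so Bennett's inequality, \lemref{bennett_censored}, can later be applied once the $\Delta$'s are specified) while the dominant window term keeps the full weight $e^{\lambda W_b}$, and doing so when only $f'$ — not $f''$ — is available and when the censoring bias $\bE[\xi_{b,i}]$ is nonzero. These are the same difficulties already navigated in \citet{leungshaounif} for the uniform bound, and the present argument is a longer but essentially routine variant of it.
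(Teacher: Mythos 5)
Your treatment of the first term is correct: the Chernoff bound for $P(T<c_2/2)$ via $e^{-u}\le 1-u+u^2/2$ combined with Cauchy--Schwarz does yield $\bigl(\bE[e^{2\lambda W_b}]\bigr)^{1/2}\exp\bigl(-c_2^2/(16c_1\delta^2)\bigr)$, and your verification that $T_i:=|\xi_i|\min(\delta,|\xi_{b,i}|)\ge|\xi_i|\min(\delta,|\xi_i|/2)$ under $\delta<1/2$ is right. But the second half has a concrete obstruction centered on your kernel choice, and it is not a routine detail.

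You define $\hat K_i(t)=\xi_i\bigl(I(0\le t<\xi_{b,i})-I(\xi_{b,i}\le t<0)\bigr)$ precisely because that makes the mass $T_i$ dominate $|\xi_i|\min(\delta,|\xi_i|/2)$; the purely censored kernel $\xi_{b,i}(\cdots)$ has mass $|\xi_{b,i}|\min(\delta,|\xi_{b,i}|)$, which on $\{|\xi_i|>1\}$ equals $\delta<|\xi_i|\delta$, so it fails the needed lower bound (a simple counterexample: $\xi_i=\pm 2$ equiprobable). But with your hybrid kernel, the Stein identity reads $\int\hat K_i(t)f'(W_b^{(i)}+t)\,dt=\xi_i\bigl(f(W_b)-f(W_b^{(i)})\bigr)$ with the \emph{uncensored} $\xi_i$ in front, so summing over $i$ produces $Wf(W_b)-\sum_i\xi_if(W_b^{(i)})$ --- the window term would come out as $\bE[|W|e^{\lambda W_b}(\cdots)]$, not $\bE[|W_b|e^{\lambda W_b}(\cdots)]$ as in the statement (and $|W|$, $|W_b|$ are not comparable, so you cannot simply upgrade one to the other). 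The leave-one-out correction would similarly carry $|\xi_i|$, not $|\xi_{b,i}|$; since $|\xi_i|\ge|\xi_{b,i}|$ this gives a strictly \emph{larger} error than the lemma asserts. Worse, your observation that $\bE[\xi_i]=0$ makes $\bE[\xi_if^{(i)}(W_b^{(i)})]$ vanish would eliminate the bias term entirely, yet the lemma's third term is $\sum_i|\bE[\xi_{b,i}]|\bE[e^{\lambda W_b^{(i)}}(\cdots)]$, which can only emerge from a $\xi_{b,i}$-weighted identity where $\bE[\xi_{b,i}]\ne 0$. These three statements are mutually inconsistent: you cannot simultaneously use the $\xi_i$-weighted kernel to control $\bE[T]$, invoke $\bE[\xi_i]=0$ to kill the leave-one-out remainder, and land on error terms written in $\xi_{b,i}$, $W_b$, and $|\bE[\xi_{b,i}]|$.

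A second, independent issue: the proposed pointwise swap of $e^{\lambda W_b}f'(W_b)$ for $e^{\lambda(W_b^{(i)}+t)}f'(W_b^{(i)}+t)$ does not just cost $e^{\lambda\delta}$. The $e^{\lambda t}\le e^{\lambda\delta}$ factor is fine, but $|f'(W_b)-f'(W_b^{(i)}+t)|$ is controlled only by $\delta^{-1}|\xi_{b,i}-t|$, which can be of order $\delta^{-1}$ --- not small. The standard route through the integrated Stein identity (passing to $f$, which is $1$-Lipschitz) is how the paper's line of argument avoids this, but it requires committing to a single kernel whose pre-factor matches the error terms. Reconciling the lower bound on $\bE[T]$ with a $\xi_{b,i}$-weighted Stein identity (for example, by decomposing $\xi_i=\xi_{b,i}+(\xi_i-\xi_{b,i})$ and controlling the excess separately) is the missing step, and without it the argument as written does not produce the stated bound.
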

 
 \subsection{Bounds related to the  components of the censored denominator remainder in  \secref{mainpf}}
 This subsection supplements \secref{mainpf}, and in particular  \underline{\eqref{zero_mean_kernel_assumption} and \eqref{sigma_equal_1_assumption} are assumed to hold}. 
Given 
\[
\sum_{i=1}^n \bE[\xi_{b, i}^2] + \sum_{i=1}^n \bE[(\xi_i^2 - 1) I(|\xi_i |>1)] = \sum_{i=1}^n \bE[\xi_i^2]= 1
\]
and $d_n^2 = n/(n-1)$, we shall rewrite $ D_{2, V_b, \bardelta_1, \bardelta_{2,b}}  $ in \eqref{DR_censored_def} as 
 \begin{equation} \label{DRbar_rewrite}
D_{2, V_b, \bardelta_1, \bardelta_{2,b}}  =  d_n^2\Bigg(\delta_{0,b} + \bardelta_1 + \bardelta_{2, b} \Bigg) + \frac{\sum_{i=1}^n  \bE[\xi_{b, i}^2] }{n-1}-  \sum_{i=1}^n \bE\Big[(\xi_i^2 - 1) I(|\xi_i |>1)\Big],
 \end{equation}
 where
 \[
\delta_{0, b}  \equiv \sum_{i=1}^n \Big(\xi_{b, i}^2 - \bE[\xi_{b, i}^2]\Big).
\]
This section includes some useful properties related to the components $\delta_{0,b}$, $\delta_1$ and $\delta_{2, b}$ in \eqref{DRbar_rewrite}; recall $f_x$ is the solution to the Stein equation in \eqref{fx}.

  \begin{lemma}[Properties of $\delta_{0, b}$] \label{lem:delta_0_property}
 There exists positive absolute constants $C >0$ and $x \geq 1$, 
 \begin{equation}  \label{bdd_exp_delta0b_sq}
\bE[ \delta_{0, b}^2] \leq  \sum_{i=1}^n \bE[|\xi_{b, i}|^3]
 \end{equation}
 \begin{equation}\label{bdd_exp_ew_delta0b}
  \bE\Big[e^{W_b} \delta_{0, b}^2 \Big] \leq C  \sum_{i=1}^n\bE[|\xi_{b,i}|^3]
   \end{equation}
and
 \begin{equation} \label{bdd_fx_delta0b}
 \Big|\bE[\delta_{0, b} f_x(W_b)]\Big| \leq  C e^{-x} \sum_{i=1}^n \bE[|\xi_{b, i}|^3].
 \end{equation}
 \end{lemma}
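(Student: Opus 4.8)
The plan is to prove the three bounds in turn, in every case writing $\delta_{0,b} = \sum_{i=1}^n Y_i$ with $Y_i \equiv \xi_{b,i}^2 - \bE[\xi_{b,i}^2]$, and exploiting the following elementary facts: the $Y_i$ are independent and mean zero; $|\xi_{b,i}| \le 1$, so $\bE[Y_i^2] \le \bE[\xi_{b,i}^4] \le \bE[|\xi_{b,i}|^3]$ and $\sum_{i=1}^n \bE[|\xi_{b,i}|^3] \le \sum_{i=1}^n \bE[\xi_{b,i}^2] \le \sum_{i=1}^n \bE[\xi_i^2] \le 1$; and, by Lyapunov's inequality, $\bE[|\xi_{b,i}|\,|Y_i|] \le \bE[|\xi_{b,i}|^3] + \bE[\xi_{b,i}^2]\,\bE[|\xi_{b,i}|] \le \bE[|\xi_{b,i}|^3] + (\bE[\xi_{b,i}^2])^{3/2} \le 2\bE[|\xi_{b,i}|^3]$. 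Given these, \eqref{bdd_exp_delta0b_sq} is immediate: by independence and $\bE[Y_i]=0$, $\bE[\delta_{0,b}^2] = \sum_{i=1}^n \bE[Y_i^2] \le \sum_{i=1}^n \bE[|\xi_{b,i}|^3]$.

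For \eqref{bdd_exp_ew_delta0b} I would expand $\delta_{0,b}^2 = \sum_i Y_i^2 + \sum_{i\ne j} Y_i Y_j$ and use independence of the $\xi_k$ to factor $\bE[e^{W_b} Y_i^2] = \bE[e^{\xi_{b,i}} Y_i^2]\prod_{k\ne i}\bE[e^{\xi_{b,k}}]$ and $\bE[e^{W_b} Y_i Y_j] = \bE[e^{\xi_{b,i}} Y_i]\,\bE[e^{\xi_{b,j}} Y_j]\prod_{k\ne i,j}\bE[e^{\xi_{b,k}}]$. The partial products are absolute constants: Bennett's inequality (\lemref{bennett_censored} with $t=1$) gives $\bE[e^{W_b}] = \prod_k \bE[e^{\xi_{b,k}}] \le C$, while $\bE[e^{\xi_{b,i}}] \ge e^{\bE[\xi_{b,i}]} \ge e^{-1}$ (since $|\xi_{b,i}| \le 1$), so $\prod_{k\ne i}\bE[e^{\xi_{b,k}}] \le e\,\bE[e^{W_b}] \le C$ and likewise $\prod_{k\ne i,j}\bE[e^{\xi_{b,k}}] \le C$. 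For the diagonal, $e^{\xi_{b,i}} \le e$ gives $\bE[e^{\xi_{b,i}}Y_i^2] \le e\,\bE[Y_i^2] \le e\,\bE[|\xi_{b,i}|^3]$, so $\sum_i \bE[e^{W_b}Y_i^2] \le C\sum_i \bE[|\xi_{b,i}|^3]$. For the cross terms I would use $\bE[Y_i]=0$ to write $\bE[e^{\xi_{b,i}}Y_i] = \bE[(e^{\xi_{b,i}}-1)Y_i]$, and $|e^t - 1| \le e|t|$ for $|t|\le 1$ to get $|\bE[e^{\xi_{b,i}}Y_i]| \le e\,\bE[|\xi_{b,i}|\,|Y_i|] \le 2e\,\bE[|\xi_{b,i}|^3]$; summing, $\sum_{i\ne j}|\bE[e^{\xi_{b,i}}Y_i]|\,|\bE[e^{\xi_{b,j}}Y_j]| \le (\sum_i |\bE[e^{\xi_{b,i}}Y_i]|)^2 \le C(\sum_i \bE[|\xi_{b,i}|^3])^2 \le C\sum_i \bE[|\xi_{b,i}|^3]$, the last inequality using $\sum_i \bE[|\xi_{b,i}|^3] \le 1$. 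Adding the two contributions gives \eqref{bdd_exp_ew_delta0b}.

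For \eqref{bdd_fx_delta0b} I would condition on the summand: write $W_b = W_b^{(i)} + \xi_{b,i}$ where $W_b^{(i)} = W_b - \xi_{b,i}$ is independent of $\xi_{b,i}$, hence of $Y_i$, so that $\bE[Y_i f_x(W_b^{(i)})] = 0$ and $\bE[Y_i f_x(W_b)] = \bE[Y_i\bigl(f_x(W_b^{(i)}+\xi_{b,i}) - f_x(W_b^{(i)})\bigr)] = \bE\bigl[Y_i \int_0^{\xi_{b,i}} f_x'(W_b^{(i)}+t)\,dt\bigr]$. The $e^{-x}$ decay then comes from the nonuniform derivative bound in \lemref{helping}: since $|t| \le |\xi_{b,i}| \le 1$, on $\{W_b^{(i)} \le x-2\}$ one has $W_b^{(i)}+t \le x-1$ and hence $|f_x'(W_b^{(i)}+t)| \le e^{1/2-x}$, whereas on the complement we just use $|f_x'|\le 1$ from \lemref{helping_unif}; thus the inner integral is bounded in absolute value by $|\xi_{b,i}|\bigl(e^{1/2-x} + I(W_b^{(i)} > x-2)\bigr)$. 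Taking expectations, using independence of $(\xi_{b,i}, Y_i)$ and $W_b^{(i)}$ together with the exponential Markov bound $P(W_b^{(i)} > x-2) \le e^{-(x-2)}\bE[e^{W_b^{(i)}}] \le C e^{-x}$ (once more via \lemref{bennett_censored}, as $\bE[e^{W_b^{(i)}}] \le e\,\bE[e^{W_b}] \le C$), one gets $|\bE[Y_i f_x(W_b)]| \le C e^{-x}\,\bE[|\xi_{b,i}|\,|Y_i|] \le C e^{-x}\,\bE[|\xi_{b,i}|^3]$; summing over $i$ yields \eqref{bdd_fx_delta0b}.

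The only point that calls for care is the cross-term bookkeeping in \eqref{bdd_exp_ew_delta0b}: a crude Cauchy--Schwarz split $\bE[e^{W_b}\delta_{0,b}^2] \le (\bE[e^{2W_b}])^{1/2}(\bE[\delta_{0,b}^4])^{1/2}$ would deliver only the weaker $(\sum_i\bE[|\xi_{b,i}|^3])^{1/2}$, so one must retain the product structure of the expectation and rely on $\sum_i \bE[|\xi_{b,i}|^3] \le 1$ to collapse the square of the sum back to the sum itself. Everything else is a routine combination of Bennett's inequality and the Stein-solution estimates of \lemsref{helping_unif} and \lemssref{helping}.
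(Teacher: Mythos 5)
Your proof is correct. The paper's own ``proof'' of this lemma is simply a citation to \citet[Appendix D.1--D.2]{leungshaounif} (where $\delta_{0,b}$ appears as the quantity $\Pi_1$), so there is no in-line argument in this manuscript to compare against; but the toolkit you deploy---the diagonal/cross-term split of $\delta_{0,b}^2$ together with the collapse $(\sum_i\bE[|\xi_{b,i}|^3])^2\le\sum_i\bE[|\xi_{b,i}|^3]$, Bennett's inequality (\lemref{bennett_censored}) to control $\bE[e^{W_b}]$ and $\bE[e^{W_b^{(i)}}]$, the leave-one-out independence giving $\bE[Y_i f_x(W_b^{(i)})]=0$, the Taylor identity $f_x(W_b)-f_x(W_b^{(i)})=\int_0^{\xi_{b,i}}f_x'(W_b^{(i)}+t)\,dt$, and the regime split at $W_b^{(i)}=x-2$ paired with the $f_x'$ estimates from \lemsref{helping_unif} and \lemssref{helping} and an exponential Markov bound to produce $e^{-x}$---is exactly the one this paper uses in the parallel arguments for \lemref{expect_fxWB} and \lemref{delta2b_property}, so your route is essentially the intended one and every step checks out. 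Your aside about why a crude Cauchy--Schwarz split would only yield $(\sum_i\bE[|\xi_{b,i}|^3])^{1/2}$ is also accurate and correctly identifies the one place where the product structure must be preserved.
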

 \begin{proof}[Proof of \lemref{delta_0_property}]
 The proofs of \eqref{bdd_exp_delta0b_sq} and \eqref{bdd_exp_ew_delta0b} can be found in \citet[Appendix D.1, p.30-31]{leungshaounif}; the proof of \eqref{bdd_fx_delta0b} can be  found in  \citet[Appendix D.2, p.33]{leungshaounif}. Note that $\delta_{0, b}$ is same as the quantity "$\Pi_1$"  in \citet{leungshaounif}.
 \end{proof}
 
  \begin{lemma}[Properties of $\delta_1$] \label{lem:delta1_moment_bdd}
  Assume $\bE[h(X_1, \dots, X_m)] = 0$ and $\sigma^2 =1$.
 For some positive absolute constants $C(m) >0$,
 
 \begin{equation} \label{delta1_1st_abs_moment_bdd}
  \bE\Big[|\delta_1| \Big] \leq    \frac{C m^2\bE[h^2]}{n}
   \end{equation}
 and
 \begin{equation}  \label{delta1_3over2_th_abs_moment_bdd}
  \bE\Big[|\delta_1|^{3/2} \Big] \leq C(m)  \frac{\bE[|h|^3]}{n^{3/2}}.
   \end{equation}
 \end{lemma}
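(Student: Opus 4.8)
The plan is to bound each additive piece of $\delta_1$ separately. Write $\delta_1=\delta_1^{*}-\frac{(n-1)^2}{(n-m)^2}U_n^2$ as in \eqref{delta_1} and recall from \eqref{delta_1_star} that $\delta_1^{*}$ is a numerical multiple of $W^2$, plus one of $\Lambda_n^2$, plus one of $\sum_{i=1}^n W\Psi_{n,i}$. The first thing I would record are the Hoeffding‑decomposition identities $\frac{\sqrt n}{m}U_n=W+D_1$ (the display just above \eqref{D1}, using \eqref{xi_for_u_stat}) and $\sum_{i=1}^n\Psi_{n,i}=m\binom{n-1}{m-1}D_1$ (each $m$‑subset is counted $m$ times in the double sum), the latter turning the third term of $\delta_1^{*}$ into a numerical multiple of $WD_1$; likewise $D_1=\binom{n-1}{m-1}^{-1}\sum_{|T|=m}\bar{h}_m(X_T)/\sqrt n=\frac{\sqrt n}{m}\bar{U}_n$, where $\bar{U}_n$ is the degree‑$m$ U‑statistic with kernel $\bar{h}_m$. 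Since $2m<n$, the resulting numerical prefactors obey $\frac{(n-1)^2}{(n-m)^2}\leq 4$, $\frac{n}{(n-m)^2}\leq\frac 4n$, $\frac{1}{n-m}\leq\frac 2n$ and $\frac{(n-1)^2}{\binom{n-1}{m-1}^2(n-m)^2}\leq\frac{4}{\binom{n-1}{m-1}^2}$, so it suffices to establish the moment estimates: (a) $\bE[W^2]=1$ and $\bE|W|^3\leq C\bE|h|^3$; (b) $\bE[U_n^2]\leq Cm^2n^{-1}\bE[h^2]$ and $\bE|U_n|^3\leq C(m)n^{-3/2}\bE|h|^3$; (c) $\bE[D_1^2]\leq C\bE[h^2]$ and $\bE|D_1|^3\leq C(m)\bE|h|^3$; (d) $\bE[\Lambda_n^2]\leq Cm^2n^{-1}\binom{n-1}{m-1}^2\bE[h^2]$ and $\bE[\Lambda_n^3]\leq C(m)n^{-3/2}\binom{n-1}{m-1}^3\bE|h|^3$. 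Then \eqref{delta1_1st_abs_moment_bdd} follows from the triangle inequality and $\bE|WD_1|\leq(\bE[W^2])^{1/2}(\bE[D_1^2])^{1/2}$, and \eqref{delta1_3over2_th_abs_moment_bdd} from $|a+b+c|^{3/2}\leq C(|a|^{3/2}+|b|^{3/2}+|c|^{3/2})$ and $\bE|WD_1|^{3/2}\leq(\bE|W|^3)^{1/2}(\bE|D_1|^3)^{1/2}$; the powers of $n$ line up because $\sigma^2=1$ forces $\bE[g^2]=1$, hence $\bE|h|^p\geq\bE|g|^p\geq 1$ for $p\in\{2,3\}$ by \eqref{Jensen} and Lyapunov's inequality, which lets one absorb stray factors $n^{-1/4},n^{-1/2}$ and $(\bE[h^2])^{1/2}\leq\bE[h^2]$.

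For (a), $\bE[W^2]=\sum_i\bE[\xi_i^2]=1$ by \eqref{simpAssumptions}, and Rosenthal's inequality gives $\bE|W|^3\leq C\big((\sum_i\bE[\xi_i^2])^{3/2}+\sum_i\bE|\xi_i|^3\big)=C(1+n^{-1/2}\bE|g|^3)\leq C\bE|h|^3$. For (b) and (c) I would use the classical variance formula $\var(U_n)=\binom nm^{-1}\sum_{c=1}^m\binom mc\binom{n-m}{m-c}\sigma_c^2$ together with Vandermonde's identity $\sum_{c=0}^m\binom mc\binom{n-m}{m-c}=\binom nm$ and $\sigma_c^2\leq\bE[h^2]$ (from \eqref{Jensen}), which gives $\var(U_n)\leq\big(1-\binom{n-m}{m}\binom nm^{-1}\big)\bE[h^2]\leq\frac{m^2}{n-m+1}\bE[h^2]$; for $\bar{U}_n$ the kernel $\bar{h}_m$ moreover satisfies $\sigma_1^2=\var(\bE[\bar{h}_m\mid X_1])=0$ by \eqref{degen_prop_bar_h_m}, so the same computation yields $\bE[D_1^2]=\frac{n}{m^2}\var(\bar{U}_n)\leq\frac{n}{m^2}\cdot\frac{m^2}{n-m+1}\bE[\bar{h}_m^2]\leq C\bE[h^2]$ by \eqref{Jensen_consequence}. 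The third‑moment halves of (b) and (c) follow from \lemref{non_integral_moment_bdd_u_stat} with $p=3$ (applied to $U_n$, which is degenerate of order $r\geq1$, and to $\bar{U}_n$, which is degenerate of order $r\geq2$) together with \eqref{Jensen_consequence}.

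The step I expect to be the main obstacle is (d), since $\Psi_{n,i}$ is not a genuine U‑statistic but a partial sum with one coordinate frozen. For $\bE[\Lambda_n^2]=n\bE[\Psi_{n,1}^2]$ I would expand $\Psi_{n,1}^2$ over ordered pairs $(S,S')$ of $(m-1)$‑subsets of $\{2,\dots,n\}$: by \eqref{degen_prop_bar_h_m}, in the Hoeffding decomposition of $\bar{h}_m(X_1,X_S)$ in the variables $X_1,(X_s)_{s\in S}$ only components indexed by subsets of size $\geq2$ survive, so $\bE[\bar{h}_m(X_1,X_S)\bar{h}_m(X_1,X_{S'})]$ vanishes unless $S\cap S'\neq\emptyset$ and is otherwise $\leq\bE[\bar{h}_m^2]$; the number of overlapping pairs is $\binom{n-1}{m-1}\big(\binom{n-1}{m-1}-\binom{n-m}{m-1}\big)$, and $1-\binom{n-m}{m-1}\binom{n-1}{m-1}^{-1}=1-\prod_{j=1}^{m-1}\big(1-\frac{m-1}{n-j}\big)\leq\frac{(m-1)^2}{n-m+1}$ supplies the needed factor of order $m^2/n$. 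For $\bE[\Lambda_n^3]$ I would first use the power‑mean bound $\big(\sum_i\Psi_{n,i}^2\big)^{3/2}\leq n^{1/2}\sum_i|\Psi_{n,i}|^3$ to reduce to $\bE|\Psi_{n,1}|^3$, then condition on $X_1$: given $X_1$, $\Psi_{n,1}$ equals $\binom{n-1}{m-1}n^{-1/2}$ times a centered (again by \eqref{degen_prop_bar_h_m}) degree‑$(m-1)$ U‑statistic in $X_2,\dots,X_n$ with kernel $\bar{h}_m(X_1,\cdot)$, so \lemref{non_integral_moment_bdd_u_stat} applied conditionally gives $\bE[|\Psi_{n,1}|^3\mid X_1]\leq C(m)\binom{n-1}{m-1}^3n^{-3}\,\bE[|\bar{h}_m(X_1,\cdot)|^3\mid X_1]$, and averaging over $X_1$ with \eqref{Jensen_consequence} closes the estimate. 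A secondary subtlety is that \eqref{delta1_1st_abs_moment_bdd} requires an \emph{absolute} constant in front of $m^2$, which is exactly why (b)--(d) must be carried out through the explicit combinatorial identities (Vandermonde, the overlap count, and the vanishing of the rank‑one Hoeffding component of $\bar{h}_m$) rather than via a black‑box appeal to \lemref{non_integral_moment_bdd_u_stat}, whose constant's dependence on $m$ is not quantified.
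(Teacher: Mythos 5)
Your proposal is correct in spirit and arrives at the right bounds, but it takes a genuinely different route from the paper's on the main technical point, namely the $3/2$-moment bound for $\Lambda_n^2$. The paper represents $\tilde\Lambda_n^2 = n\binom{n}{2m-1}^{-1}\Lambda_n^2$ explicitly as a sum of U-statistics with kernels $\bar{\cH}_k$ of degree $k=m,\dots,2m-1$ (its equation \eqref{Lambda_n_reexpr}), proves by a three-case argument that the top kernel $\bar{\cH}_{2m-1}$ is second-order degenerate (equation \eqref{2nd_order_degen_barHm}), and then invokes Lemma~\ref{lem:non_integral_moment_bdd_u_stat} on the centered parts plus a separate bound on $\bE[\tilde\Lambda_n^2]$. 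You instead avoid the Hoeffding-type expansion of $\Lambda_n^2$ entirely by using the power-mean inequality $\big(\sum_i\Psi_{n,i}^2\big)^{3/2}\le n^{1/2}\sum_i|\Psi_{n,i}|^3$ and then conditioning on $X_1$: given $X_1$, $\Psi_{n,1}$ is (up to scaling) a centered degree-$(m-1)$ U-statistic in $X_2,\dots,X_n$, and Lemma~\ref{lem:non_integral_moment_bdd_u_stat} applies conditionally. Your route is more elementary and avoids having to establish the second-order degeneracy of the derived kernel. Similarly, for the cross term $\sum_i W\Psi_{n,i}$ you use the exact collapse $\sum_i\Psi_{n,i}=m\binom{n-1}{m-1}D_1$ and bound $\bE|WD_1|^{3/2}$ by Cauchy--Schwarz, whereas the paper bounds it more crudely by the AM--GM step in \eqref{WPsi_bdd} and folds it back into $W^2$ and $\Lambda_n^2$. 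Both approaches yield the stated rates. For the first-moment inequality \eqref{delta1_1st_abs_moment_bdd} the paper simply cites a pre-established bound on $\bE[|\delta_1^*|]$ from \citet{leungshaounif} together with Serfling's $\bE[U_n^2]\le m\bE[h^2]/n$, whereas you re-derive it from scratch via the variance formula and the overlap count; again both are valid.

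One small slip in the constant bookkeeping: you invoke \eqref{Jensen_consequence} to control $\bE[\bar h_m^2]$ by $\bE[h^2]$ when deriving $\bE[D_1^2]\le C\bE[h^2]$ with an absolute constant, but \eqref{Jensen_consequence} only gives a factor $C(m)$. To keep the constant in \eqref{delta1_1st_abs_moment_bdd} absolute (and $m$-free, other than the explicit $m^2$), you need the sharper fact $\bE[\bar h_m^2]\le\bE[h_m^2]$, which the paper records in a footnote and which follows at once from the $L^2$-orthogonality of Hoeffding components, $\bE[\bar h_m^2]=\bE[h^2]-m\sigma^2$. With that replacement your argument is tight on constants.
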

 %proof begins
 \begin{proof}[Proof of \lemref{delta1_moment_bdd}] 

 In \citet[Section 3, p.11]{leungshaounif}, we have established that
 \begin{equation*}
 \bE[|\delta_1^*|] \leq    2 \left[\frac{ m (m-1) (n -1)}{(n-m)^2} \right] + 
  \frac{4(n-1)^2(m-1)^2}{ (n-m)^2(n-m+1)m }
 \bE \left[ h^2\right].
 \end{equation*}
 Moreover, by \citet[Lemma 5.2.1.A$(i)$, p.183]{serfling1980},  $\bE[U_n^2] \leq  \frac{m \bE[h^2]}{n}$. Collecting these facts give  \eqref{delta1_1st_abs_moment_bdd} because $\bE[|h|^2] \geq 1$.

 For \eqref{delta1_3over2_th_abs_moment_bdd}, we need to first establish certain higher moment bounds for $W_n$ and $\Lambda_n$.
 For $W_n$, by  \citet[Theorem 3]{rosenthal1970subspaces}'s inequality, we have that 
 \begin{equation} \label{Wn_3rd_moment_bdd}
 \bE\Big[|W_n|^3\Big] \leq C \Bigg\{ \Big(\sum_{i=1}^n \bE[\xi_i^2]\Big)^{3/2}  + \sum_{i=1}^n \bE[|\xi_i|^3] \Bigg\} \leq C \Bigg(1 + \frac{\bE[|g|^3]}{\sqrt{n}} \Bigg).
 \end{equation}
%  \begin{align*}
%\sum_{i=1}^n q_i^2
%&= \sum_{i =1}^n \left( {n-1 \choose m-1}^{-1} \sum_{\substack{1 \leq l_1 < \cdots < l_{m-1}\leq n\\ l_j \neq i}}h(X_i, X_{l_1}, \dots, X_{l_{m-1}})\right)^2\\
%&=  {n-1 \choose m-1}^{-2} \sum_{i = 1}^n 
%\sum_{ \substack{1 \leq l_1 < \cdots < l_{m-1}\leq n\\ 1 \leq l_1' < \cdots < l_{m-1}'\leq n \\ l_j ,l_j'\neq i}}  h(X_i, X_{l_1}, \dots, X_{l_{m-1}})h(X_i, X_{l_1'}, \dots, X_{l_{m-1}'})   \\
%&= {n-1 \choose m-1}^{-2} \left( \sum_{k = m}^{2m-1} \sum_{1 \leq l_1 < \dots < l_k\leq n}  \cH_k ( X_{l_1}, \dots, X_{l_k})\right),
%\end{align*}
%new expression as U-statistcs for Lambda
 For $\Lambda_n$, upon rescaling $\Lambda_n^2$ with the factor $n{n \choose 2m-1}^{-1}$, we write:
  \begin{align}
&\tilde{\Lambda}_n^2 \equiv n {n \choose 2m-1}^{-1}\Lambda_n^2 \notag\\
&={n \choose 2m-1}^{-1} \sum_{i =1}^n \left(  \sum_{\substack{1 \leq i_1 < \cdots < i_{m-1}\leq n\\ i_l \neq i \text{ for } l = 1, \dots, m-1}} \bar{h}_m(X_i, X_{i_1}, \dots, X_{i_{m-1}})\right)^2 \notag\\
&= {n \choose 2m-1}^{-1}  \sum_{i = 1}^n 
\sum_{ \substack{1 \leq i_1 < \cdots < i_{m-1}\leq n\\ 1 \leq j_1 < \cdots < j_{m-1}\leq n \\ i_l, j_l\neq i \text{ for } l = 1, \dots, m-1}}  \bar{h}_m(X_i, X_{i_1}, \dots, X_{i_{m-1}}) \bar{h}_m(X_i, X_{j_1}, \dots, X_{j_{m-1}})   \notag \\
&= {n \choose 2m-1}^{-1} \left( \sum_{k = m}^{2m-1} \sum_{1 \leq i_1 < \dots < i_k\leq n}  \bar{\cH}_k ( X_{i_1}, \dots, X_{i_k})\right) \label{Lambda_n_reexpr},
\end{align}
where for each $k = m, \dots, 2m-1$, $\bar{\cH}_k :\mathbb{R}^k \longrightarrow \mathbb{R}$ is a symmetric kernel of degree $k$ defined as
\begin{equation} \label{bar_H_k}
\bar{\cH}_k (x_1, \dots, x_k) \equiv  (2m  - k) \times \sum_{\substack{\cS_1, \cS_2, \cS_3 \subset \{1, \dots, k\}: \\ |\cS_1| = 2m- k \\ |\cS_2| =  |\cS_3|=  k - m  \\  \cS_1, \cS_2, \cS_3 \text{disjoint}  }}     \bar{h}_m(x_{\cS_1}, x_{\cS_2}) \bar{h}_m(x_{\cS_1}, x_{\cS_3})
\end{equation}
induced by $h(\cdot)$. Hence, up to scaling factors, $\Lambda_n^2$  can be seen as a sum of $2m-1$ U-statistics with kernels of degree $k = m, \dots, 2m -1$. Moreover, for  $k = 2m -1$, 
$
\bar{\cH}_{2m-1} (x_1, \dots, x_m)
$
is seen to have 
the \emph{second-order} degeneracy property
\begin{equation} \label{2nd_order_degen_barHm}
\bE[\bar{\cH}_{2m-1} ( X_1, \dots, X_{2m-1}) |X_i, X_j] = 0 \text{ for } \{i, j\} \subset \{1, \dots, 2m-1\} \text{ and } i \neq j,
\end{equation}
which, in particular, implies 
\begin{equation}\label{mean_zero_barHm}
\bE[\bar{\cH}_{2m-1} ( X_1, \dots, X_{2m-1})] = 0;
\end{equation}
 we will prove \eqref{2nd_order_degen_barHm} at the end.

Now, by taking the $(3/2)$-th absolute central moment of $\tilde{\Lambda}_n^2 $, from \eqref{Lambda_n_reexpr} one get
\begin{align}
& \bE\Bigg[\Bigg|\tilde{\Lambda}_n^2 - \bE[\tilde{\Lambda}_n^2]\Bigg|^{3/2}\Bigg] \notag\\
&\leq C(m)\sum_{k=m}^{2m-1} \frac{1}{n^{3(2m-1-k)/2}} \bE\Bigg[ \Bigg|\frac{ \sum_{1 \leq i_1 < \dots < i_k\leq n}  \bar{\cH}_k ( X_{i_1}, \dots, X_{i_k})}{{n \choose k}} - \bE[\bar{\cH}_k ]\Bigg|^{3/2}\Bigg]\notag\\
&\leq C(m)  \Bigg\{n^{-3/2} \bE\Big[  |\bar{\cH}_{2m-1} (X_1, \dots, X_k)|^{3/2}\Big] +  \sum_{k=m}^{2m-2} \underbrace{ \frac{1}{n^{3m-1 -3k/2}} }_{\leq n^{-2}} \bE \Big[  |\bar{\cH}_k (X_1, \dots, X_k)|^{3/2}\Big] \Bigg\}\notag\\
&\leq C(m) \frac{\bE[|h|^3]}{n^{3/2}} \label{tilde_Lambda_sq_central_mom_bdd}
\end{align}
where the second last inequality uses the moment bound \eqref{simple_u_stat_moment_bdd} for centered U-statistics 
in \lemref{non_integral_moment_bdd_u_stat} and the  degeneracy of $\bar{\cH}_{2m-1}$ in \eqref{2nd_order_degen_barHm}; the last inequality in \eqref{tilde_Lambda_sq_central_mom_bdd} uses the definition in \eqref{bar_H_k}, the Cauchy inequality
\[
\bE[|\bar{h}_m(X_{\cS_1}, X_{\cS_2}) \bar{h}_m(X_{\cS_1}, X_{\cS_3})|^{3/2}] \leq \bE[|\bar{h}_m(X_1, \dots, X_m)|^3]
\]
and that
$
 \bE[|\bar{h}_m|^3] \leq C(m)  \bE[|h_m|^3]$ from \eqref{Jensen_consequence}. On the other hand,  from  the definition in \eqref{bar_H_k} and that $\bE[\bar{h}_m^2] \leq C(m)\bE[h_m^2]$ from \eqref{Jensen_consequence}  \footnote{Actually it can also be shown that $\bE[\bar{h}_m^2] \leq \bE[h_m^2]$; see $(10.76)$ in \citet[p.284]{chen2011normal}.}, it follows that
\[
\Big|\bE[\bar{\cH}_k (X_1, \dots, X_k)]\Big| \leq C(m) \bE[h^2(X_1, \dots, X_m)] \text{ for } k = 1, \dots, 2m-2,
\]
which, together with \eqref{mean_zero_barHm}, implies that
\begin{equation} \label{tilde_Lambda_sq_mean_bdd}
\bE[\tilde{\Lambda}_n^2 ] \leq C(m) \frac{ \bE[h^2]}{n}
\end{equation}
Combining \eqref{tilde_Lambda_sq_central_mom_bdd} and \eqref{tilde_Lambda_sq_mean_bdd}, by $\|h(X_1, \dots, X_m)\|_2 \leq \|h(X_1, \dots, X_m)\|_3$, we have
\begin{equation} \label{lucky_bdd_minus_order_3_voer_2}
\bE\Big[|\tilde{\Lambda}_n|^{3/2} \Big]  = \bE\Bigg[\Bigg(n {n \choose 2m-1}^{-1}\Bigg)^{3/2}|\Lambda_n|^3\Bigg] \leq \frac{C(m)\bE[|h|^3]}{n^{3/2}}
\end{equation}

Now we can finish proving the lemma, by using the basic property that
\begin{multline}\label{WPsi_bdd}
 \frac{2(n-1)(m-1) }{(n-m)^2 {n-1 \choose m-1}} \Bigg|\sum_{i = 1}^n  W_n \Psi_{n, i}\Bigg|
%  \leq 2 \Bigg\{ \frac{\sqrt{n}(m-1)}{n-m}|W_n| \Bigg\} \Bigg\{\frac{(n-1)}{{n-1 \choose m-1}(n-m) } \Lambda_n \Bigg\}\\
\leq \frac{n (m-1)^2}{(n-m)^2} W_n^2 + \frac{(n-1)^2}{{n-1 \choose m-1}^2 (n-m)^2 } \Lambda_n^2,
\end{multline}
a consequence of the Cauchy's inequality $2 |W_n   \sum_{i=1}^n \Psi_{n, i}|  \leq 2 \sqrt{n} |W_n| \Lambda_n$.
 Recalling the definition of $\delta_1^*$ in \eqref{delta_1_star}, we can apply \eqref{Wn_3rd_moment_bdd} and \eqref{lucky_bdd_minus_order_3_voer_2} to get
 \begin{align*}
 \bE[|\delta_1^*|^{3/2}] &\leq C\Bigg\{  \left[ \frac{ n(m-1)^2}{(n-m)^2} + \frac{2(m-1)}{(n-m)}\right]^{3/2} \bE[ |W_n|^3] +  \Bigg[ \frac{(n-1)^2}{ {n-1 \choose m-1}^2 (n-m)^2} \Bigg]^{3/2}\bE[|\Lambda_n|^3] \Bigg\}\\
 &\leq C(m) \Bigg\{n^{-3/2} \Bigg(1 + \frac{\bE[|g|^3]}{\sqrt{n}} \Bigg) +  \Bigg[ \frac{(n-1)^2 {n \choose 2m-1}}{ n{n-1 \choose m-1}^2 (n-m)^2} \Bigg]^{3/2}\frac{\bE[|h|^3]}{n^{3/2}} \Bigg\}\\
 &\leq C(m)  \frac{\bE[|h|^3]}{n^{3/2}}.
 \end{align*}
Since $\bE[|U_n|^{3}] \leq C(m) n^{-3/2}\bE[|h|^3]$ by the  bound \eqref{simple_u_stat_moment_bdd} in \lemref{non_integral_moment_bdd_u_stat},  we get from \eqref{delta_1} that
 \[
 \bE[|\delta_1|^{3/2}] \leq C(m) \left(\bE[|\delta_1^*|^{3/2}] +  \bE[|U_n|^{3}]\right) \leq C(m)  \frac{\bE[|h|^3]}{n^{3/2}};
 \]
 \eqref{delta1_3over2_th_abs_moment_bdd} is proved.

 It remains to show  \eqref{2nd_order_degen_barHm}, for which we will leverage  the degenerate property of $\bar{h}_m$ in  \eqref{degen_prop_bar_h_m}.
 From the definition in \eqref{bar_H_k}, it suffices to show that each summand of $\bar{\cH}_{2m-1} (X_1, \dots, X_{2m-1})$ has the same property, i.e. 
 \[
 \bE[   \bar{h}_m(X_{\cS_1}, X_{\cS_2}) \bar{h}_m(X_{\cS_1}, X_{\cS_3})| X_i, X_j] = 0
 \]
 for any disjoint $\cS_1, \cS_2, \cS_3 \subset \{1, ,\dots, 2m-1\}$ such that $|\cS_1| = 1$. We consider three cases \footnote{It's possible that $i \in \cS_3$, $j \in \cS_2$ for case $(ii)$ and $i, j \in \cS_3$ for case $(iii)$, with the same proof.}: 
 \begin{enumerate}
 \item 
 If $\{i, j\}\cap \cS_1 \neq \emptyset$, without loss of generality, we can assume that  $i \in \cS_1$ and $j \in \cS_2$. Then, by \eqref{degen_prop_bar_h_m},
\begin{align*}
&\bE[\bar{h}_m (X_{\cS_1}, X_{\cS_2}) \bar{h}_m (X_{\cS_1}, X_{\cS_3} )| X_i , X_j] \\
&= \bE[ \bar{h}_m (X_{\cS_1}, X_{\cS_2})| X_i, X_j] \bE[ \bar{h}_m (X_{\cS_1}, X_{\cS_3})| X_{i}] \\
&= \bE[ \bar{h}_m (X_{\cS_1}, X_{\cS_2})| X_i, X_j] \cdot 0 = 0.
\end{align*}

\item  If $\{i, j\}\cap \cS_1 = \emptyset$ and $i \in \cS_2$ and $j \in \cS_3$,  then by \eqref{degen_prop_bar_h_m},
\begin{align*}
&\bE[\bar{h}_m (X_{\cS_1}, X_{\cS_2}) \bar{h}_m (X_{\cS_1}, X_{\cS_3} )| X_i , X_j] \\
&= \bE[ \bar{h}_m (X_{\cS_1}, X_{\cS_2})| X_i] \bE[ \bar{h}_m (X_{\cS_1}, X_{\cS_3})| X_j] \\
&= 0 \cdot 0 = 0.
\end{align*}

\item  If $\{i, j\}\cap \cS_1 = \emptyset$ and $i, j \in \cS_2$,  then
\begin{align*}
&\bE[\bar{h}_m (X_{\cS_1}, X_{\cS_2}) \bar{h}_m (X_{\cS_1}, X_{\cS_3} )| X_i , X_j] \\
&= \bE[ \bar{h}_m (X_{\cS_1}, X_{\cS_2})| X_i, X_j] \bE[ \bar{h}_m (X_{\cS_1}, X_{\cS_3})] \\
&= \bE[ \bar{h}_m (X_{\cS_1}, X_{\cS_2})| X_i, X_j] \cdot 0 = 0.
\end{align*}

\end{enumerate}
 \end{proof}
 
 \begin{lemma}[Properties of $\delta_{2, b}$]  \label{lem:delta2b_property}
  Assume $\bE[h(X_1, \dots, X_m)] = 0$ and $\sigma^2 =1$.  There exists a positive absolute constant $C(m) >0$ depending only on $m$ such that
\begin{equation}\label{delta2b_l2_bdd}
\|\delta_{2, b}\|_2\leq C(m) \Bigg\{  \frac{\|g\|_3 \|h\|_3}{\sqrt{n}}\Bigg\}
\end{equation}
Moreover, for an absolute constant $C >0$,
\begin{equation} \label{bdd_fx_delta2b}
|\bE[\bardelta_{2, b} f_x(W_b)]| \leq  C e^{-x} \|\delta_{2, b}\|_2 \text{ for } x\geq 1.
\end{equation}

\end{lemma}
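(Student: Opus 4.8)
The plan is to prove \eqref{delta2b_l2_bdd} first --- the substantive part --- and then deduce \eqref{bdd_fx_delta2b} from it routinely using the pointwise bounds on the Stein solution $f_x$ collected in \appref{tech_lemmas}. For \eqref{delta2b_l2_bdd} I would begin by rewriting $\delta_{2,b}$ as a constant multiple of a mean-zero U-statistic: collecting the $m$ indices $\{i\}\cup\{i_1,\dots,i_{m-1}\}$ into one $m$-subset $S$ and using the symmetry of $\bar h_m$ gives $\sum_{i=1}^n \xi_{b,i}\Psi_{n,i}=\tfrac1{\sqrt n}\sum_{|S|=m}\bar h_m(X_S)\sum_{k\in S}\xi_{b,k}$, so that $\delta_{2,b}=\tfrac{2(n-1)\sqrt n}{m(n-m)}\,U_n^{(\psi)}$ with $U_n^{(\psi)}={n\choose m}^{-1}\sum_{|S|=m}\psi(X_S)$ and symmetric kernel $\psi(x_1,\dots,x_m)=\bar h_m(x_1,\dots,x_m)\bigl(\xi_b(x_1)+\dots+\xi_b(x_m)\bigr)$, where $\xi_b(\cdot)$ denotes the censoring map applied to $g(\cdot)/\sqrt n$; note $\bE[\psi]=0$ by the degeneracy \eqref{degen_prop_bar_h_m}. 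I would then Hoeffding-decompose $U_n^{(\psi)}$ (equivalently, expand $\bE[\delta_{2,b}^2]$ directly over pairs of $m$-subsets classified by the size of their intersection): since $\bar h_m$ is degenerate of order one and $\bar h_1\equiv0$, the linear component of $\psi$ is $\bE[\psi\mid X_1]=(m-1)\,\bE[\xi_b(X_2)\bar h_2(X_1,X_2)\mid X_1]=:(m-1)\rho_b(X_1)$ with $\bE[\rho_b]=0$, and the standard variance bound for U-statistics gives $\var\bigl(U_n^{(\psi)}\bigr)\le \tfrac{C(m)}{n}\,\bE[\rho_b(X_1)^2]+\tfrac{C(m)}{n^2}\,\bE[\psi^2]$.

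The two moments are then estimated under the third-moment hypothesis. By conditional Cauchy--Schwarz and $X_2\perp X_1$, $\bE[\rho_b(X_1)^2]\le \bE[\xi_b(X_2)^2]\,\bE[\bar h_2(X_1,X_2)^2]\le \tfrac1n\cdot C\,\bE[h^2]\le \tfrac{C}{n}\|h\|_3^2$, using $\bE[\xi_b^2]\le\bE[(g/\sqrt n)^2]=1/n$ together with \eqref{Jensen}--\eqref{Jensen_consequence}. For $\bE[\psi^2]$, expanding the square and bounding the off-diagonal terms by Cauchy--Schwarz and exchangeability gives $\bE[\psi^2]\le m^2\,\bE\bigl[\bar h_m(X_1,\dots,X_m)^2\,\xi_b(X_1)^2\bigr]$; the crucial point is the censoring inequality $\xi_b(X_1)^2\le |g(X_1)|/\sqrt n$ (from $\min(a^2,1)\le|a|$), which with H\"older, $\bE[\bar h_m^2|g(X_1)|]\le\|\bar h_m\|_3^2\|g\|_3$, and \eqref{Jensen_consequence} yields $\bE[\psi^2]\le\tfrac{C(m)}{\sqrt n}\|g\|_3\|h\|_3^2$. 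Plugging back, and using $\bigl(\tfrac{2(n-1)\sqrt n}{m(n-m)}\bigr)^2\le C(m)\,n$, gives $\bE[\delta_{2,b}^2]\le C(m)\bigl(\tfrac1n\|h\|_3^2+\tfrac1{n^{3/2}}\|g\|_3\|h\|_3^2\bigr)$, and since $\|g\|_3\ge\|g\|_2=\sigma=1$ the right-hand side is $\le C(m)\|g\|_3^2\|h\|_3^2/n$, which is \eqref{delta2b_l2_bdd}.

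For \eqref{bdd_fx_delta2b}: since censoring only shrinks magnitude, $|\bar{\delta}_{2,b}|\le|\delta_{2,b}|$, so it suffices to bound $\bE\bigl[|\delta_{2,b}|\,|f_x(W_b)|\bigr]$, which I would split over $\{W_b\le x-1\}$ and $\{W_b>x-1\}$. On the first event $f_x(W_b)\le 1.7e^{-x}$ by \eqref{fxbdd}, contributing $\le1.7e^{-x}\,\bE[|\delta_{2,b}|]\le1.7e^{-x}\|\delta_{2,b}\|_2$. On the second event $0<f_x(W_b)\le 1/x\le1$ (again by \eqref{fxbdd}, since $1/w<1/x$ for $w>x$), so by Cauchy--Schwarz that contribution is $\le\|\delta_{2,b}\|_2\sqrt{P(W_b>x-1)}$, and the Bennett-type bound \lemref{bennett_censored} applied with $t=2$ gives $P(W_b>x-1)\le e^{-2(x-1)}\bE[e^{2W_b}]\le Ce^{-2x}$, hence $\sqrt{P(W_b>x-1)}\le Ce^{-x}$; adding the two pieces proves \eqref{bdd_fx_delta2b}. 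The whole argument parallels the treatment of the analogous denominator-remainder quantities in \citet{leungshaounif}, and the only step that genuinely requires care is the second-moment estimate, where the third-moment assumption forces the use of the censoring inequality $|\xi_{b,i}|\le\min(1,|g(X_i)|/\sqrt n)$ and of H\"older in $L^3$ rather than cruder fourth-moment bounds --- in particular one must keep the linear Hoeffding term $\bE[\rho_b^2]$ separate from $\bE[\psi^2]$, since bounding $\var(U_n^{(\psi)})$ by $\bE[\psi^2]/n$ alone would not give the stated rate.
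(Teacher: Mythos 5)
Your argument is correct, and for the first part it supplies exactly what the paper declines to spell out: the paper proves \eqref{delta2b_l2_bdd} simply by identifying $\delta_{2,b}$ with the quantity ``$\Pi_{22}$'' of \citet{leungshaounif}, quoting the bound $\|\delta_{2,b}\|_2^2 \le C(m)\bigl(\bE[h^2]/n + \|g\|_3^2\|h\|_3^2/n \wedge \|h\|_3^2/n^{2/3}\bigr)$ from equation (E.3) there, and then noting $\|h\|_2 \le \|h\|_3$ and $\|g\|_3 \ge \|g\|_2 = 1$. You instead derive the estimate from scratch: rewriting $\delta_{2,b}$ as a scaled mean-zero U-statistic with symmetric kernel $\psi = \bar h_m\sum_k\xi_b(\cdot)$, invoking the two-term Hoeffding variance bound, and using the censoring inequality $\xi_b^2 \le |g|/\sqrt n$ together with H\"older in $L^{3}$ to estimate $\bE[\psi^2]$. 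Your observation that one must split off the linear projection term $\bE[\rho_b^2]$ rather than crudely bound $\var(U_n^{(\psi)}) \le \bE[\psi^2]/n$ is precisely correct; the crude bound gives $\|\delta_{2,b}\|_2^2 \lesssim \|g\|_3\|h\|_3^2/\sqrt n$, which is not dominated by the target $\|g\|_3^2\|h\|_3^2/n$. The self-contained proof makes transparent where the third moment, the censoring, and the degeneracy of $\bar h_m$ enter, at the cost of a page of computation the paper avoids by citation.

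For \eqref{bdd_fx_delta2b} your route coincides with the paper's up to a cosmetic difference: you bound $\bE\bigl[|\delta_{2,b}|\,I(W_b>x-1)\bigr] \le \|\delta_{2,b}\|_2\sqrt{P(W_b>x-1)}$ and then Chernoff the tail probability via $\bE[e^{2W_b}]$, whereas the paper writes $I(W_b>x-1)\le e^{W_b-(x-1)}$, uses the uniform bound $f_x \le 0.63$, and applies Cauchy--Schwarz with $\|e^{W_b}\|_2$. Both reduce to Bennett's inequality for $\bE[e^{2W_b}]$ and give the same $e^{-x}$ factor. One small presentational note: you invoke $f_x(W_b)\le 1/x$ on $\{W_b>x-1\}$ from \lemref{helping}, whereas the paper uses the uniform bound $f_x \le 0.63$ from \lemref{helping_unif}; both suffice, and your sharper choice makes no difference since you immediately relax to $\le 1$.
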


\begin{proof}[Proof of \lemref{delta2b_property}]
$\delta_{2, b}$ is precisely the quantity "$\Pi_{22}$" in \citet[Appendix E.1]{leungshaounif}, and  the bound  \eqref{delta2b_l2_bdd}  is shown as  equation $(E.3)$ in \citet{leungshaounif} which states
\[
\|\delta_{2, b}\|_2^2 \leq C(m) \left(\frac{ \bE[         
    h^2 
   ]}{n} + \frac{ \|g\|_3^2 \|h\|_3^2}{n} \wedge \frac{ \|h \|_3^2}{n^{2/3}}\right),
\]
and can be further simplified as  \eqref{delta2b_l2_bdd} because $\|h\|_2 \leq \|h\|_3$ and $1=  \|g\|_2 \leq \|g\|_3$. 

 \eqref{bdd_fx_delta2b} can be easily proved using a technique from \citet[Appendix D.2]{leungshaounif} as follows: By \eqref{fxbdd} in \lemref{helping} and  $ 0 < f_x \leq 0.63$ in \lemref{helping_unif},
  \begin{align*}
\Big| \bE\Big[\bardelta_{2,b} f_x(W_b)\Big]\Big| &= \Big|  \bE\Big[\bardelta_{2,b}  f_x(W_b)I(W_b \leq x -1) \Big]  + \bE\Big[\bardelta_{2,b}  f_x(W_b) I(W_b > x -1) \Big] \Big|\\
&\leq  1.7 e^{-x} \bE\Big[|\delta_{2,b} |  \Big] +  0.63\bE\Big[ |\delta_{2,b} | \frac{e^{W_b}}{e^{x-1}} \Big]\leq C e^{-x} \|\delta_{2,b} \|_2,
\end{align*}
where the last inequality uses Bennett's inequality (\lemref{bennett_censored}).
\end{proof}

\section{Proof of \lemref{bdd_for_remnants}} \label{app:remnant_pf}

\begin{proof}[Proof of \lemref{bdd_for_remnants}$(i)$]
Rewrite $D_1$ in \eqref{D1} as 
 \begin{equation*} 
D_1 = \frac{\sqrt{n}}{m} \times {n \choose m }^{-1} \sum_{1 \leq i_1 < \dots < i_m\leq n} \bar{h}_m(X_{i_1}, \dots, X_{i_m}).
\end{equation*}
In light of \eqref{degen_prop_bar_h_m}, we recognize that $ {n \choose m }^{-1} \sum_{1 \leq i_1 < \dots < i_m\leq n} \bar{h}(X_{i_1}, \dots, X_{i_m})$ is a mean-0 degenerate U-statistic of rank $2$. By the bound for  the  central absolute moment  of U-statistics in \lemref{non_integral_moment_bdd_u_stat} and  that $\bE[|\bar{h}_m|^3] \leq C(m)\bE[|h_m|^3]$ from \eqref{Jensen_consequence}, we have
\[
\bE[|D_1|^3] \leq  \frac{n^{3/2}}{m^3} \frac{C(m)}{n^3} \bE[|h|^3] = \frac{C(m)}{n^{3/2}} \bE[|h|^3].
\]
By Markov's inequality, we hence get
\[
P\Big(|D_1|>\frac{\frakcm x}{4} \Big) \leq \frac{64\bE[|D_1|^3] }{ \frakcm^3 x^3} \leq \frac{C(m)}{\frakcm^3 n^{3/2}(1 +x^3)} \bE[|h|^3].
\]

\end{proof}

% new proof
\begin{proof}[Proof of \lemref{bdd_for_remnants}$(ii)$]
Let $W^{(i)} = W - \xi_i$, which satisfies 
\[
P\Big(W^{(i)} \geq \frac{\frakcm x}{4} \Big)  \leq P\left(\max_{1 \leq i \leq n} |\xi_i| > \frac{\frakcm x}{6}\right) + e^{3/2} \Big(1 + \frac{(\frakcm x)^2}{24}\Big)^{-3/2}
\]
by \citet[Lemma 8.2]{chen2011normal}(taking $p = 3/2$ in that lemma). This implies 
\begin{align*}
&P\Big(W \geq \frac{\frakcm x}{2}, \max_{1 \leq i \leq n} |\xi_i| >1 \Big)\\
&\leq \sum_{i=1}^n P\Big(W \geq \frac{\frakcm x}{2},  |\xi_i| >1\Big)\\
&\leq \sum_{i=1}^n P\Big(|\xi_i | > \frac{\frakcm x}{4} \Big) + \sum_{i=1}^n P\Big(W^{(i)} \geq \frac{\frakcm x}{4} \Big) P(|\xi_i| > 1)\\
&\leq \sum_{i=1}^n  P\Big(|\xi_i| > \frac{\frakcm x}{4} \Big) +  \left\{  P\left(\max_{1 \leq i \leq n} |\xi_i| > \frac{\frakcm x}{6}\right) + e^{3/2} \Big(1 + \frac{(\frakcm x)^2}{24}\Big)^{-3/2}\right\} \sum_{i=1}^n P(|\xi_i|>1)\\
&\leq \sum_{i=1}^n  P\Big(|\xi_i| > \frac{\frakcm x}{4} \Big) +  \left\{  P\left(\max_{1 \leq i \leq n} |\xi_i| > \frac{\frakcm x}{6}\right) + e^{3/2} \Big(1 + \frac{(\frakcm x)^2}{24}\Big)^{-3/2}\right\}  \sum_{i=1}^n  \bE[|\xi_i|^2 I(|\xi_i|>1)]\\
&\leq  2\sum_{i=1}^n P\Big(|\xi_i| > \frac{\frakcm x}{6} \Big) + e^{3/2} \Big(1 + \frac{(\frakcm x)^2}{24}\Big)^{-3/2}  \sum_{i=1}^n  \bE[|\xi_i|^2 I(|\xi_i|>1)] \text{ given  \eqref{sigma_equal_1_assumption}} \\
&\leq 2\sum_{i=1}^n P\Big(|\xi_i| > \frac{\frakcm x}{6} \Big) + \frac{(24 \cdot e)^{3/2}}{ \frakcm^3 (24 + x^2)^{3/2}} \sum_{i=1}^n  \bE[|\xi_i|^2 I(|\xi_i|>1)]  \text{ given $0 < \frakcm < 1$}  \\
&\leq \frac{C}{\frakcm^3(1 + x^3)} \sum_{i=1}^n \bE[|\xi_i|^3] =  \frac{C\bE[|g|^3]}{\frakcm^3(1 + x^3)\sqrt{n}}.
\end{align*}
\end{proof}

% new proofs
\begin{proof}[Proof of \lemref{bdd_for_remnants}$(iii)$]
 \begin{align*} \label{iii_bd_prelim}
&P\left( W_b  \geq  \frac{ \frakcm x}{2}, |\delta_1| > n^{-1/2}\right) \\
&\leq  e^{-\frakcm x/2} \bE[e^{W_b} I(|\delta_1| >n^{-1/2})]   \\
&\leq e^{-\frakcm x/2} \|e^{W_b}\|_3 \|I(|\delta_1| >n^{-1/2})\|_{3/2} \text{ by H\"older's inequality}\\
&\leq C e^{-\frakcm x/2} \|n^{1/2} |\delta_1|\|_{3/2} \text{ by Bennett's inequality (\lemref{bennett_censored}) and $I(|\delta_1| >n^{-1/2}) \leq n^{1/2}|\delta_1|$} \\
&\leq  C(m) e^{-\frakcm x/2} \frac{\|h(X_1, \dots, X_m)\|_3^2}{\sqrt{n}} \text{  by \eqref{delta1_3over2_th_abs_moment_bdd} in \lemref{delta1_moment_bdd} }.
 \end{align*}
\end{proof}
\begin{proof} [Proof of \lemref{bdd_for_remnants}$(iv)$]
\begin{align*}
&P\left( W_b  \geq  \frac{ \frakcm x}{2}, |\delta_{2,b} |> 1\right)  \\
&\leq  \bE[e^{W_b - \frac{\frakcm x}{2}} I(|\delta_{2, b}| > 1)] \\
&\leq e^{-\frakcm x/2}  \bE[e^{W_b}|\delta_{2,b}|] \\
&\leq C e^{-\frakcm x/2} \|\delta_{2,b}\|_2 \text{ by Bennett's inequality (\lemref{bennett_censored}) }\\
&\leq  C(m) e^{-\frakcm x/2} \frac{\|g\|_3 \|h\|_3}{\sqrt{n}}  \text{ by  \lemref{delta2b_property}}.
\end{align*}
\end{proof}

\begin{proof}[Proof of \lemref{bdd_for_remnants}$(v)$]

 From the alternative form of $D_{2, V_b, \bardelta_1, \bardelta_{2,b}} $ in \eqref{DRbar_rewrite}, 
\begin{align*}
&  P\left( W_b  \geq  \frac{\frakcm x}{2}, | D_{2, V_b, \bardelta_1, \bardelta_{2,b}}  | > 1 \right)  \\
&\leq e^{-  \frakcm x/2} \bE[   e^{W_b}   D_{2, V_b, \bardelta_1, \bardelta_{2,b}}  ^2]\\
&\leq C e^{-  \frakcm x/2} \Bigg\{\bE[e^{W_b} \delta_{0, b}^2] + \bE[e^{W_b} \bardelta_1^2 ]+ \bE[e^{W_b} \bardelta_{2, b}^2] \\
&\hspace{1cm}+ \bE[e^{W_b}] \Bigg(\frac{\sum_{i=1}^n  \bE[\xi_{b, i}^2] }{n-1}+  \sum_{i=1}^n \bE\Big[(\xi_i^2 - 1) I(|\xi_i| >1)\Big] \Bigg)\Bigg\} ,
\end{align*}
where we have also used that $d_n \leq 2$ and both
\[
\frac{\sum_{i=1}^n  \bE[\xi_{b, i}^2] }{n-1} \text{ and } \sum_{i=1}^n \bE\Big[(\xi_i^2 - 1) I(|\xi_i |>1)\Big]
\]
are less than 1. Continuing,  we get
\begin{align*}
&  P\left( W_b  \geq  \frac{\frakcm x}{2}, | D_{2, V_b, \bardelta_1, \bardelta_{2,b}}  | > 1 \right)  \\
&\leq   Ce^{-\frakcm x/2}  \{\bE[e^{W_b} \delta_{0, b}^2] + \|e^{W_b}\|_2  \|\bardelta_1^2 \|_2+ \|e^{W_b} \|_2 \|\bardelta_{2, b}^2\|_2 + n^{-1} + \sum_{i=1}^n \bE[\xi_i^2 I(|\xi_i| >1)]\}\\
&\hspace{10cm} \text{ (by \lemref{bennett_censored})}\\
&\leq   Ce^{-\frakcm x/2}  \Big\{\bE[e^{W_b} \delta_{0, b}^2] + \|e^{W_b}\|_2  \sqrt{\bE[|\bardelta_1|]}+ \|e^{W_b} \|_2 \|\bardelta_{2, b}\|_2 +  n^{-1}+\sum_{i=1}^n \bE[|\xi_i|^3]\Big\}\\
&\hspace{10cm} \text{ (by $|\bardelta_1| \vee |\bardelta_{2,b}| \leq 1$)}.
\end{align*}
To wrap up the proof, apply Bennett's inequality (\lemref{bennett_censored}),  \lemsref{delta_0_property}-  \lemssref{delta2b_property}, $n^{-1} \leq n^{-1/2} \bE[|g|^3]$ and that  $\|h\|_2 \leq \|g\|_3 \|h\|_3$ to the last line and get
\[
P\left( W_b  \geq  \frac{\frakcm x}{2}, | D_{2, V_b, \bardelta_1, \bardelta_{2,b}}  | > 1 \right) \leq  Ce^{-\frakcm x/2} \Bigg( \frac{\bE[|g|^3] +   \|g\|_3 \|h\|_3}{\sqrt{n}} \Bigg).
\]
\end{proof}

\begin{proof} [Proof of \eqref{Rx_final_bdd}]
 Given \eqref{Rx_bdd}, simply putting $(i)-(v)$ together and use the fact $\|h\|_3 \|g\|_3 \leq \|h\|_3^2$.
\end{proof}

\section{Proof of \lemref{intermediate}} \label{app:Stein_pf}

Before proving \lemref{intermediate}, we shall review a useful property of variable censoring discussed in \citet[Section 2.0.3]{leungshaounif}:
\begin{property} \label{property:censoring_property}
 Suppose $Y$ and $Z$ are any two real-value variables censored in the \emph{same manner}, i.e. for some  $a, b \in \bR \cup \{-\infty, \infty\}$ with $a \leq b$, we define their censored versions 
\[
\bar{Y} \equiv a I(Y < a) + Y I(a \leq Y \leq b)+bI(Y>b)
\] and 
\[
\bar{Z}  \equiv a I(Z < a) + Z I(a \leq Z \leq b)+bI(Z>b).
\]
Then it must be that 
$
|\bar{Y} - \bar{Z} | \leq |Y- Z|.
$
\end{property}

Now we begin the proof. We first let 
\[
D_{2, b, n^{-1/2}, \bardelta_{2, b}} =  d_n^2( V_{n, b}^2 + n^{-1/2} + \bar{\delta}_{2, b}) - 1
\text{ and }
D_{2, b, -n^{-1/2}, \bardelta_{2, b}} =  d_n^2( V_{n, b}^2 -n^{-1/2} + \bar{\delta}_{2, b}) - 1,
\]
where we respectively replaced $\bardelta_1$ with its lower and upper bounds $-n^{-1/2}$ and $n^{-1/2}$  from the definition of $D_{2, V_b, \bardelta_1, \bardelta_{2,b}}$ in \eqref{DR_censored_def}. Analogously to \eqref{barDR_censored_def}, we also let
\begin{multline*}
\barD_{2, V_b, n^{-1/2}, \bardelta_{2,b}}\equiv D_{2, V_b, n^{-1/2}, \bardelta_{2,b}}  I\Bigg(\frac{9\frakc_m^2}{16}- 1 \leq D_{2, V_b, n^{-1/2}, \bardelta_{2,b}}   \leq 1 \Bigg)\\
 +I\Bigg(D_{2, V_b, n^{-1/2}, \bardelta_{2,b}} >1\Bigg) + \Big(\frac{9\frakc_m^2}{16}- 1 \Big)I\Bigg(D_{2, V_b, n^{-1/2}, \bardelta_{2,b}}   < \frac{9\frakcm^2}{16}- 1\Bigg).
\end{multline*}
and
\begin{multline*}
\barD_{2, V_b, - n^{-1/2}, \bardelta_{2,b}}\equiv D_{2, V_b, - n^{-1/2}, \bardelta_{2,b}}  I\Bigg(\frac{9\frakc_m^2}{16}- 1 \leq D_{2, V_b, - n^{-1/2}, \bardelta_{2,b}}   \leq 1 \Bigg)\\
 +I\Bigg(D_{2, V_b, - n^{-1/2}, \bardelta_{2,b}} >1\Bigg) + \Big(\frac{9\frakc_m^2}{16}- 1 \Big)I\Bigg(D_{2, V_b, - n^{-1/2}, \bardelta_{2,b}}   < \frac{9\frakcm^2}{16}- 1\Bigg).
\end{multline*}
With respect to these, we define the  "placeholder" denominator remainder 
\begin{multline} \label{placeholder_D2_def}
\DRplaceholder = \DRplaceholder(X_1, \dots, X_n) \equiv  d_n^2( V_{n, b}^2 + (-n^{-1/2}|n^{-1/2}) + \bar{\delta}_{2, b}) - 1 \\
= d_n^2\Bigg(\delta_{0,b} + (-n^{-\frac{1}{2}}|n^{-\frac{1}{2}})+ \bardelta_{2, b} \Bigg) + \frac{\sum_{i=1}^n  \bE[\xi_{b, i}^2] }{n-1}-  \sum_{i=1}^n \bE\Big[(\xi_i^2 - 1) I(|\xi_i |>1)\Big]
\end{multline}
 (where the second line comes from \eqref{DRbar_rewrite}) and 
its censored version
\begin{multline} \label{placeholder_D2_def_censored}
\barDRplaceholder \equiv
 \DRplaceholder I\Bigg(\frac{9\frakc_m^2}{16}- 1 \leq \DRplaceholder   \leq 1 \Bigg)
 +I\Bigg(\DRplaceholder >1\Bigg) + \Big(\frac{9\frakc_m^2}{16}- 1 \Big)I\Bigg(\DRplaceholder   < \frac{9\frakcm^2}{16}- 1\Bigg),
\end{multline}
where for any $a, b \in \bR$, $(a|b)$ represents either $a$ or $b$, which means that
\[
\DRplaceholder \text{ represents either } D_{2, b, n^{-1/2}, \bardelta_{2, b}} \text{ or } D_{2, b, - n^{-1/2}, \bardelta_{2, b}}.
\]
 Since $- n^{-1/2} \leq \bardelta_1 \leq n^{-1/2}$ by its definition \eqref{bardelta1_def}, it is easy to see that
\begin{multline} \label{sandwich}
P(W_b + \barD_{1, x} > x (1 +  \bar{D}_{2, V_b, n^{-1/2}, \bardelta_{2, b}} )^{1/2}) \\
\leq P(W_b + \barD_{1, x} > x (1 +  \bar{D}_{2, V_b, \bardelta_1, \bardelta_{2, b}} )^{1/2})\\
 \leq P(W_b + \barD_{1, x} > x (1 +   \bar{D}_{2, V_b, -n^{-1/2}, \bardelta_{2, b}} )^{1/2}).
\end{multline}
Therefore, to show  \lemref{intermediate}, it suffices to show the same bound \eqref{intermediate_nonunif_bdd} with $ \bar{D}_{2, V_b, n^{-1/2}, \bardelta_{2, b}} $ replaced by $\bar{D}_{2, V_b, n^{-1/2}, \bardelta_{2, b}}$ or $\bar{D}_{2, V_b, -n^{-1/2}, \bardelta_{2, b}}$, i.e., 
\begin{equation} \label{intermediate_nonunif_bdd_2}
\Big|P\Big(W_b + \barD_{1, x} > x\Big(1 +  \barDRplaceholder \Big)^{1/2}\Big) - \barPhi(x)\Big| \leq \frac{C(m)}{ e^{c(m)x}} \Bigg( \frac{\bE[|g|^3]}{\sqrt{n}} +   \frac{\|g\|_3^2 \|h\|_3}{\sqrt{n}}\Bigg).
\end{equation}
As will be seen later, transforming the problem into one that handles $\DRplaceholder$ instead of ${D}_{2, V_b, \bardelta_1, \bardelta_{2, b}}$ has the advantage of obviating the need to deal with the variability of $\bardelta_1$;  a similar strategy has also been employed in our related work \citet{leungshaounif} for proving uniform B-E bounds.

  Since $\barDRplaceholder  > -1$ almost surely, by applying the elementary inequality that
\[
 (1 + s)^{1/2} \leq 1 + s/2 \text{ for all } s \geq -1,
\]
one get the two event  inclusions
\begin{multline*} \label{event1}
\Bigg\{W_b + \barD_{1, x} > x\Big(1 +  \barDRplaceholder \Big)^{1/2}\Bigg\} \\
\subset \left\{W_b  + \barD_{1,x} - \frac{x}{2}\barDRplaceholder  > x\right\} \bigcup \left\{x (1 + \barDRplaceholder)^{1/2} < W_b + \barNR \leq x ( 1 +  \barDRplaceholder/2)\right\} 
\end{multline*}
and 
\begin{equation*} \label{event2}
\Bigg\{W_b + \barD_{1, x} > x\Big(1 +  \barDRplaceholder \Big)^{1/2}\Bigg\} \supset \left\{W_b  + \barD_{1,x} - \frac{x}{2} \barDRplaceholder  > x\right\}.
\end{equation*}
These imply
\begin{multline} \label{two_quantities_to_bdd}
\Big|P\Big(W_b + \barD_{1, x} > x\Big(1 +  \barDRplaceholder \Big)^{1/2}\Big) - \barPhi(x)\Big| \leq \\ 
P\left( x (1 + \barDRplaceholder)^{1/2} < W_b + \barNR \leq x ( 1 +  \barDRplaceholder/2) \right)  + 
 \left|P\left(W_b  + \barD_{1,x} - \frac{x}{2} \barDRplaceholder > x\right) - \bar{\Phi}(x)\right|.
\end{multline}
Hence, proving \eqref{intermediate_nonunif_bdd_2} boils down to bounding the two terms on the right of \eqref{two_quantities_to_bdd}
%Hence, proving  \eqref{intermediate_nonunif_bdd_2} boils down to  developing nonuniform upper bounds for 
%\begin{equation} \label{two_quantities_to_bdd_A}
%P\left( x (1 + \barDRplaceholder)^{1/2} < W_b + \barNR \leq x ( 1 +  \barDRplaceholder/2) \right) 
%\end{equation}
%and
%\begin{equation} \label{two_quantities_to_bdd_B}
% \left|P\left(W_b  + \barD_{1,x} - \frac{x}{2} \barDRplaceholder > x\right) - \bar{\Phi}(x)\right|.
%\end{equation} 

To do this, we shall first define the "leave-one-out" variants for some of the variables involved. Let $i \in \{1,\dots, n\}$ be any sample point. For the numerator remainder, we define the variant of $D_1$ with all terms involving $X_i$ omitted, i.e. 
\begin{equation} \label{D1i_def}
D_1^{(i)} = D_1^{(i)}(X_1, \dots, X_{i-1}, X_i \dots, X_n)
\equiv {n -1 \choose m -1}^{-1} \sum_{\substack{1 \leq i_1 < \dots < i_m \leq n\\ i_l \neq i \text{ for } l = 1, \dots, m}} \frac{\bar{h}_m(X_{i_1}, X_{i_2}, \dots, X_{i_m}) }{\sqrt{n}},
\end{equation}
and its corresponding censored version
\[
\barD_{1, x}^{(i)} \equiv D_1^{(i)} I\left(|D_1^{(i)}| \leq \frac{\frakcm x}{4} \right) +\frac{\frakcm x}{4} I\left(D_1^{(i)} >\frac{\frakcm x}{4}\right) - \frac{\frakcm x}{4} I\left(D_1^{(i)} < - \frac{ \frakcm  x}{4}\right).
\]
For the denominator remainder,  we first define
\[
\delta_{2, b}^{(i)} \equiv \frac{2 (n-1) }{\sqrt{n}(n-m)} {n-1 \choose m-1}^{-1} \sum_{\substack{j =1\\ j \neq i}}^n \xi_{b, j} \sum_{\substack{1 \leq i_1 < \dots < i_{m-1} \leq n \\
i_l \neq j, i \text{ for } l  = 1, \dots, m-1}} \bar{h}_m (X_j, X_{i_1}, \dots, X_{i_{m-1}}).
\]
and  its censored variant
\[
\bardelta_{2, b}^{(i)}  = \delta_{2, b}^{(i)}I(| \delta_{2, b}^{(i)}| \leq 1) + I(\delta_{2, b}^{(i)}> 1) -  I( \delta_{2, b}^{(i)} < -1).
\]
Base on them, we can define the "leave-one-out" denominator remainder
\begin{equation}
\DRplaceholder^{(i)} =  \DRplaceholder^{(i)} (X_1, \dots, X_{i-1}, X_i, \dots, X_n) \equiv 
 d_n^2 \Big(\sum_{\substack{j =1 \\j \neq i}}^n \xi_{b, j}^2 + (-n^{-1/2}|n^{-1/2}) + \bardelta_{2, b}^{(i)} \Big) - 1 
\end{equation}
that omits all terms involving $X_i$ or $\xi_i$, 
and its censored version
\begin{equation*}
\barDRplaceholder^{(i)} \equiv \DRplaceholder^{(i)}  I\Bigg(\frac{9\frakc_m^2}{16}- 1 \leq \DRplaceholder^{(i)}    \leq 1 \Bigg) 
 +I\Bigg(\DRplaceholder^{(i)} >1\Bigg) + \Bigg(\frac{9\frakc_m^2}{16}- 1 \Bigg)I\Bigg(\DRplaceholder^{(i)}   < \frac{9\frakcm^2}{16}- 1\Bigg).
\end{equation*}
With these notions, we can state the bounds for the right-hand-side terms of \eqref{two_quantities_to_bdd}.

\begin{lemma}[Randomized concentration inequality] \label{lem:lem1}
 Let $W$, $D_1$, $D_2$ be as defined in \secref{mainpf} for $T_n$ and $T_n^*$. Under the assumptions of 
\thmref{main} and \eqref{sigma_equal_1_assumption}, for any  $x \geq 1$,
\begin{multline*} 
P\left( x \left(1 +\barDRplaceholder\right)^{1/2}  \leq W_b + \bar{D}_{1, x} \leq x \left( 1+   \bar{\frakD}_2/2\right)\right) \leq  C x e^{- \frakcm x/4} \times\\
\Bigg\{
 \bE\Big[(1 +e^{W_b})  \bar{\frakD}_2^2\Big]  +
\sum_{i=1}^n \Big(\bE[|\xi_i|^3] +
  \Big\|\xi_{i}\Big\|_2 \Big\|\bar{D}_{1, x} - \bar{D}_{1, x}^{(i)}\Big\|_2  + 
  \Big\| \xi_{i}  \Big\|_3 \Big\|\bar{\frakD}_2 - \bar{\frakD}_2^{(i)} \Big\|_{3/2} 
  \Big)
\Bigg\}
\end{multline*}
 where $D_1^{(i)}$, $D_2^{(i)}$ are random variables such that $\xi_i$ is independent of $(W - \xi_i, D_1^{(i)}, D_2^{(i)})$. 
\end{lemma}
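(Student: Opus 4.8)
The plan is to deduce this from the exponential randomized concentration inequality of \lemref{modified_RCI_bdd}, which is tailor-made to bound an exponentially weighted probability that a censored sum $W_b$ falls in a random interval. Writing $\Delta_1 \equiv x(1+\barDRplaceholder)^{1/2} - \bar{D}_{1,x}$ and $\Delta_2 \equiv x(1+\barDRplaceholder/2) - \bar{D}_{1,x}$, the event in the lemma is exactly $\{\Delta_1 \le W_b \le \Delta_2\}$, and $\Delta_1 \le \Delta_2$ because $(1+s)^{1/2} \le 1 + s/2$ for all $s \ge -1$, applicable since $\barDRplaceholder \ge \tfrac{9\frakcm^2}{16}-1$. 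On this event $W_b + \bar{D}_{1,x} \ge x(1+\barDRplaceholder)^{1/2} \ge x(\tfrac{9\frakcm^2}{16})^{1/2} = \tfrac{3\frakcm x}{4}$, while $\bar{D}_{1,x} \le \tfrac{\frakcm x}{4}$ by construction, so $W_b \ge \tfrac{\frakcm x}{2}$ there; hence $I(\Delta_1 \le W_b \le \Delta_2) \le e^{\frac12(W_b - \frakcm x/2)} = e^{-\frakcm x/4}\,e^{W_b/2}$ and
\[
P\big(x(1+\barDRplaceholder)^{1/2} \le W_b + \bar{D}_{1,x} \le x(1+\barDRplaceholder/2)\big) \le e^{-\frakcm x/4}\,\bE\big[e^{W_b/2}\,I(\Delta_1 \le W_b \le \Delta_2)\big].
\]
This produces the $e^{-\frakcm x/4}$ factor; it is precisely the choice of the censoring thresholds $\frakcm x/4$ and $\tfrac{9\frakcm^2}{16}-1$ that forces $W_b \ge \frakcm x/2$ on the event and keeps $\barDRplaceholder$ in a fixed bounded range.

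Next I would apply \lemref{modified_RCI_bdd} with $\lambda = \tfrac12$, $c_1 = 1$, $c_2 = \tfrac14$, $\delta = (\beta_2+\beta_3)/4$ (legitimate since $\sum_{i=1}^n\bE[\xi_i^2]=1$ by \eqref{simpAssumptions}), taking $\Delta_1^{(i)} \equiv x(1+\barDRplaceholder^{(i)})^{1/2} - \bar{D}_{1,x}^{(i)}$ and $\Delta_2^{(i)} \equiv x(1+\barDRplaceholder^{(i)}/2) - \bar{D}_{1,x}^{(i)}$ as the leave-one-out versions — these omit every occurrence of $X_i$, so $\xi_i$ is independent of $(W^{(i)},W_b^{(i)},\Delta_1^{(i)},\Delta_2^{(i)})$ as required. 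The terms produced are then controlled by elementary estimates: the interval width satisfies $0 \le (1+\barDRplaceholder/2)-(1+\barDRplaceholder)^{1/2} \le C(m)\barDRplaceholder^2$ (second-order Taylor on the fixed range of $\barDRplaceholder$), so $|\Delta_2-\Delta_1| \le C(m)x\,\barDRplaceholder^2$; the map $s \mapsto (1+s)^{1/2}$ is Lipschitz with constant $\tfrac{2}{3\frakcm}$ on $[\tfrac{9\frakcm^2}{16}-1,\infty)$, so $|\Delta_j-\Delta_j^{(i)}| \le C(m)x\,|\barDRplaceholder-\barDRplaceholder^{(i)}| + |\bar{D}_{1,x}-\bar{D}_{1,x}^{(i)}|$ for $j=1,2$; Bennett's inequality (\lemref{bennett_censored}) gives $\bE[e^{W_b}], \bE[e^{3W_b^{(i)}/2}] \le C$, so with $|t|e^{t/2} \le C(1+e^t)$ the term $\bE[|W_b|e^{W_b/2}(|\Delta_2-\Delta_1|+2\delta)]$ reduces to $C(m)x\,\bE[(1+e^{W_b})\barDRplaceholder^2] + C\delta$; $\delta = (\beta_2+\beta_3)/4 \le \tfrac12\sum_i\bE[|\xi_i|^3]$ together with $e^{-c/\delta^2} \le C\delta^3$ turns the Gaussian-type term $(\bE[e^{W_b}])^{1/2}\exp(-\tfrac{c_2^2}{16c_1\delta^2})$ into $C\sum_i\bE[|\xi_i|^3]$; and $\tfrac{2e^{\lambda\delta}}{c_2}$ is an absolute constant while $|\bE[\xi_{b,i}]| \le \bE[|\xi_i|^3]$ by \lemref{exp_xi_bi_bdd} and $\barDRplaceholder^{(i)}$ is bounded, which handles the remaining $\sum_i|\bE[\xi_{b,i}]|\,\bE[e^{W_b^{(i)}/2}(\cdots)]$ term.

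The one genuinely delicate term is $\sum_i\bE[|\xi_{b,i}|e^{W_b^{(i)}/2}(|\Delta_1-\Delta_1^{(i)}|+|\Delta_2-\Delta_2^{(i)}|)]$, which must be turned into $\sum_i(\|\xi_i\|_2\|\bar{D}_{1,x}-\bar{D}_{1,x}^{(i)}\|_2 + \|\xi_i\|_3\|\barDRplaceholder-\barDRplaceholder^{(i)}\|_{3/2})$ even though $\xi_{b,i}$, $e^{W_b^{(i)}/2}$ and the two increments are correlated. I would handle it by conditioning on $(X_j)_{j\neq i}$: since $\xi_{b,i}$ is independent of that $\sigma$-field, a conditional Hölder inequality over $X_i$ — exponents $(2,2)$ against the $\bar{D}_{1,x}$-increment and $(3,3/2)$ against the $\barDRplaceholder$-increment — detaches $\|\xi_{b,i}\|_p \le \|\xi_i\|_p$ and leaves $e^{W_b^{(i)}/2}$ times a conditional $L_q$-norm of the increment; a second Hölder step over $(X_j)_{j\neq i}$ (with the matching exponents) then absorbs $e^{W_b^{(i)}/2}$ into an $O(1)$ factor via Bennett and collapses the nested norms back to the plain $\|\cdot\|_q$ of the increment. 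Feeding all of the above into \lemref{modified_RCI_bdd} and using $x \ge 1$ to pull one factor of $x$ in front of the $x$-free pieces yields the stated bound; the same argument applies verbatim to $T_n$ and $T_n^*$. The main obstacle is exactly this last bookkeeping — matching Hölder exponents to the $L_p$-norms on the right-hand side while making sure the exponential weight $e^{W_b^{(i)}/2}$ is never estimated in a way that would destroy the $e^{-\frakcm x/4}$ decay already secured — which follows the pattern of the uniform-bound proof in \citet{leungshaounif}.
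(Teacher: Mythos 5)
Your proposal is correct and tracks the paper's proof closely: you extract the same exponential factor $e^{-\frakcm x/4}$ from the indicator (via the observation that on the event $W_b\ge\frakcm x/2$), apply the same modified randomized concentration inequality (Lemma A.7) with $\lambda=1/2$, $\delta=(\beta_2+\beta_3)/4$, and control its output terms with the same ingredients (Bennett, Lemma A.5, Hölder with exponents $(2,2)$ and $(3,3/2)$, independence of $\xi_{b,i}$ from $W_b^{(i)}$).

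There is one substantive divergence worth flagging. The paper first enlarges the interval by replacing the lower endpoint $x(1+\barDRplaceholder)^{1/2}$ with $x\bigl(1 + \barDRplaceholder/2 - \barDRplaceholder^2/2\bigr)$, invoking $1+s/2-s^2/2\le(1+s)^{1/2}$ for $s\ge-1$. After this step, $\Delta_1,\Delta_2,\Delta_j^{(i)}$ are all polynomials in $(\barDRplaceholder,\barNR)$, and the increment estimates read $|\barDRplaceholder^2-(\barDRplaceholder^{(i)})^2|\le 2|\barDRplaceholder-\barDRplaceholder^{(i)}|$ (using $|\barDRplaceholder|,|\barDRplaceholder^{(i)}|\le 1$), which carry an absolute numerical constant. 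You instead keep $\Delta_1=x(1+\barDRplaceholder)^{1/2}-\barNR$ and control $|\Delta_1-\Delta_1^{(i)}|$ via the Lipschitz constant $\tfrac{2}{3\frakcm}$ of $s\mapsto(1+s)^{1/2}$ on the censored range $[\tfrac{9\frakcm^2}{16}-1,1]$. Since $\frakcm$ depends on $m$, this yields $C(m)$ where the lemma asserts an absolute $C$. For the purposes of Theorem 3.2 — where everything is absorbed into $C(m)e^{-c(m)x}$ anyway, see \eqref{last_lap} — this is harmless, but as a proof of Lemma 4.6 \emph{as stated} it delivers a slightly weaker constant. You also omit the paper's preliminary case split on $\sum_i\bE[|\xi_i|^3]\ge 2$ (handled by the trivial Bennett bound there); your argument does not obviously need it since $\beta_2+\beta_3\le 2$ always, but it is worth recording that the paper uses that split to justify the step $(x(\barDRplaceholder^{(i)})^2+\beta_2+\beta_3)\le Cx$.

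Two small cosmetic slips that do not affect the argument: the Taylor bound $0\le(1+s/2)-(1+s)^{1/2}\le s^2/2$ needs no $C(m)$ at all, and your $\delta\le\tfrac12\sum_i\bE[|\xi_i|^3]$ should read $\delta\le\tfrac14\sum_i\bE[|\xi_i|^3]$.
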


% lemma 2
\begin{lemma}[Nonuniform Berry-Esseen bound for $W_b  + \barD_{1,x} - \frac{x}{2}  \bar{\frakD}_2 $] \label{lem:lem2}
 Assuming $\max_{1 \leq i \leq n} \|\xi_i\|_3< \infty$, for any  $x \geq 1$,
\begin{multline}
 \left|P\left(W_b  + \barD_{1,x} - \frac{x}{2} \bar{\frakD}_2> x\right) - \bar{\Phi}(x)\right|\leq x \Bigg|\bE[\frakD_2 f_x(W_b)]\Bigg| +   C(m) e^{-c(m)x} \times\\
 \Bigg\{  \sum_{i=1}^n \bE[|\xi_i|^3]+ \sum_{i=1}^n \Big(  \|\xi_i\|_2  \| \bar{D}_{1,x} - \bar{D}_{1, x}^{(i)}\|_2 +  \|\xi_i\|_3  \| \bar{\frakD}_2 - \bar{\frakD}_2^{(i)}\|_{3/2}  \Big) +  \|\barD_{1, x}\|_2 +  \bE[ (1 + e^{W_b})\frakD_2^2  ]\Bigg\}
 \end{multline}
\end{lemma}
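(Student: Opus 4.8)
The plan is to establish the stated bound by Stein's method, following the same route as the uniform Berry--Esseen proof in \citet{leungshaounif} but squeezing the nonuniform factor $e^{-c(m)x}$ out of the region-wise estimates for the Stein solution in \lemsref{helping_unif} and~\lemssref{helping}. Write $T \equiv W_b + \barD_{1,x} - \frac{x}{2}\bar{\frakD}_2$, so that by the Stein equation \eqref{steineqt} applied at $T$, $|P(T>x) - \barPhi(x)| = |\bE[T f_x(T) - f_x'(T)]|$. Splitting $T$ and regrouping, this is at most the sum of a ``sum part'' $|\bE[W_b f_x(T) - f_x'(T)]|$, a ``numerator part'' $|\bE[\barD_{1,x}f_x(T)]|$, and a ``denominator part'' $\frac{x}{2}|\bE[\bar{\frakD}_2 f_x(T)]|$. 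The last of these must be opened further so as to peel off the first-order term $\frac{x}{2}\bE[\frakD_2 f_x(W_b)]$ that is left unestimated on the right-hand side of the lemma (it is handled later, inside the proof of \lemref{intermediate}, by \lemsref{delta_0_property} and~\lemssref{delta2b_property}). The unifying structural fact, a direct consequence of the delicate censoring in \secref{mainpf}, is that $|T-W_b| = |\barD_{1,x}-\frac{x}{2}\bar{\frakD}_2| \le \frac{\frakcm x}{4}+\frac{x}{2} < \frac{3x}{4}$, so that on the event $\{T>x-1\}$ one necessarily has $W_b \gtrsim x$, on which $W_b$ Bennett's inequality \lemref{bennett_censored} produces decay $e^{-c(m)x}$ dominating any polynomial-in-$x$ factor.

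For the sum part I would run the leave-one-out expansion $\bE[W_b f_x(T)] = \sum_i \bE[\xi_{b,i} f_x(T)]$, inserting $f_x(W_b^{(i)} + \Delta^{(i)})$ where $W_b^{(i)} = W_b - \xi_{b,i}$ and $\Delta^{(i)} = \barD_{1,x}^{(i)} - \frac{x}{2}\bar{\frakD}_2^{(i)}$ is independent of $\xi_i$. This yields three families of terms: the independence terms $\sum_i |\bE[\xi_{b,i}]|\,|\bE[f_x(W_b^{(i)}+\Delta^{(i)})]| \le Ce^{-x}\sum_i\bE[|\xi_i|^3]$ by \lemref{exp_xi_bi_bdd} and the $f_x$-splitting argument of \lemref{expect_fxWB} (valid since $|\Delta^{(i)}|<x$); the misaligned-increment terms $\sum_i\bE[|\xi_{b,i}|\,|\Delta-\Delta^{(i)}|\sup|f_x'|]$, which after writing $\Delta-\Delta^{(i)} = (\barD_{1,x}-\barD_{1,x}^{(i)}) - \frac{x}{2}(\bar{\frakD}_2-\bar{\frakD}_2^{(i)})$ and applying Cauchy--Schwarz/H\"older become $\sum_i\|\xi_i\|_2\|\barD_{1,x}-\barD_{1,x}^{(i)}\|_2$ and $\sum_i\|\xi_i\|_3\|\bar{\frakD}_2-\bar{\frakD}_2^{(i)}\|_{3/2}$, with the $e^{-c(m)x}$ factor supplied by the randomized concentration inequality \lemref{modified_RCI_bdd} (the thin band where $|f_x'|\sim 1$ is absorbed into the concentration bound, while elsewhere $|f_x'|\le e^{1/2-x}\vee(1+x^2)^{-1}$ by \lemref{helping}); and the aligned-increment discrepancy between $\sum_i\bE[\xi_{b,i}\int_0^{\xi_{b,i}}f_x'(W_b^{(i)}+\Delta^{(i)}+t)\,dt]$ and $\bE[f_x'(T)]$ (after accounting for $\sum_i\bE[\xi_{b,i}^2] = 1 - \sum_i\bE[(\xi_i^2-1)I(|\xi_i|>1)]$), which is $\le C(m)e^{-c(m)x}\sum_i\bE[|\xi_i|^3]$ by \lemsref{helping_unif}--\lemssref{helping} together with \lemref{bennett_censored} (which makes $\bE[|W_b|e^{W_b/2}]$, $\bE[f_x'(W_b)]$, etc.\ controllable, using that $W_b$ concentrates at distance $\gg 1$ from $x$). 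The numerator part is handled by the same $f_x$-splitting dichotomy on $\{T\le x-1\}$ versus $\{T>x-1\}$ (where $W_b\gtrsim x$, so \lemref{bennett_censored} applies), giving $|\bE[\barD_{1,x}f_x(T)]| \le C(m)e^{-c(m)x}\|\barD_{1,x}\|_2$.

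The hard part will be the denominator part, where the prefactor $\frac{x}{2}$ does not decay, so each replacement costs a power of $x$ that must be repaid by exponential decay. I would write $\frac{x}{2}\bE[\bar{\frakD}_2 f_x(T)] = \frac{x}{2}\bE[\frakD_2 f_x(W_b)] + \frac{x}{2}\bE[(\bar{\frakD}_2-\frakD_2)f_x(T)] + \frac{x}{2}\bE[\frakD_2(f_x(T)-f_x(W_b))]$ and show the last two are $\le C(m)e^{-c(m)x}\bE[(1+e^{W_b})\frakD_2^2]$. For the first residual, $\bar{\frakD}_2\neq\frakD_2$ forces $|\frakD_2|\ge 1 - \frac{9\frakcm^2}{16} > 0$, hence $I(\bar{\frakD}_2\neq\frakD_2)\le C(m)\frakD_2^2$, and on each of $\{\frakD_2>1\}$ and $\{\frakD_2<\frac{9\frakcm^2}{16}-1\}$ the censoring pins $T-W_b$ to a sub-$x$ multiple, so $f_x(T)$ is either $\le 1.7e^{-x}$ or lives on $\{W_b\gtrsim x\}$ where \lemref{delta_0_property} (and \lemref{bennett_censored}) control $\bE[e^{W_b}\frakD_2^2]$; the polynomial $x$ is then swamped. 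For the second residual, expanding $f_x(T)-f_x(W_b) = \int_0^1 f_x'\bigl(W_b + t(\barD_{1,x}-\frac{x}{2}\bar{\frakD}_2)\bigr)(\barD_{1,x}-\frac{x}{2}\bar{\frakD}_2)\,dt$ produces a worst sub-term carrying an explicit $x^2$, but on $\{W_b + t(\cdots) > x\}$ one has $W_b > \frac{x}{4}$ (since $|t(\barD_{1,x}-\frac{x}{2}\bar{\frakD}_2)| < \frac{3x}{4}$), so $\bE[\frakD_2^2 I(W_b>\frac{x}{4})]\le e^{-x/4}\bE[e^{W_b}\frakD_2^2]$ beats the $x^2$ via \lemref{delta_0_property}, on $\{W_b+t(\cdots)\le x-1\}$ one uses $|f_x'|\le e^{1/2-x}$, and the intervening band is again charged to concentration. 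Throughout one uses the elementary bounds $|\bar{\frakD}_2|\le|\frakD_2|$, $d_n^2\le 2$, $0<\frakcm<1$. Assembling the three parts and choosing $c(m)$ a small multiple of $\frakcm$, with $C(m)$ absorbing the combinatorial constants, yields the claimed inequality; the main difficulty, as anticipated, is the bookkeeping in the denominator part, ensuring every polynomial-in-$x$ factor produced by the linearization is dominated either by the exponential smallness of $f_x$ itself or by a Bennett bound on $W_b$ on the censoring-induced event $\{W_b\gtrsim x\}$.
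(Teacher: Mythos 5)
Your proposal follows essentially the same Stein's-method route as the paper's \appref{Stein_pf}: apply the Stein equation at $W_b+\Delta$ with $\Delta=\barD_{1,x}-\frac{x}{2}\bar{\frakD}_2$, run a leave-one-out expansion on $\bE[W_b f_x(W_b+\Delta)]$, exploit the structural fact $|\Delta|\le\frac{(2+\frakcm)}{4}x<\frac{3x}{4}$ (so that $W_b\gtrsim x/4$ whenever the Stein argument exceeds $x-1$) together with the region-wise $f_x,f_x',g_x$ bounds of \lemsref{helping_unif}--\lemssref{helping} and the Bennett bound \lemref{bennett_censored}, and peel off $x\bE[\frakD_2 f_x(W_b)]$ via the swaps $\bar{\frakD}_2\to\frakD_2$ and $f_x(T)\to f_x(W_b)$. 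Your ``sum/numerator/denominator'' regrouping is a harmless rearrangement of the paper's $E_1+E_2+E_3$, and using leave-one-out $\Delta^{(i)}$ rather than independent-copy $\Delta_{i^*}$ works equally well (the paper introduces both and relates them through \propertyref{indep_copy_tri_ineq}).

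One bookkeeping slip: the randomized concentration inequality \lemref{modified_RCI_bdd} is \emph{not} what supplies the $e^{-c(m)x}$ factor for your ``misaligned-increment terms'' $\sum_i\bE[|\xi_{b,i}||\Delta-\Delta^{(i)}||f_x'|]$; those are dispatched by the $f_x'$-region dichotomy ($|f_x'|\le e^{1/2-x}$ when the argument is $\le x-1$; $|f_x'|\le 1$ but $I(W_b>x/4-1)\le e^{W_b+1-x/4}$ otherwise) plus Bennett, exactly as in the paper's treatment of $E_2$. The concentration inequality is needed instead inside your ``aligned-increment discrepancy'' to control the indicator discontinuity of $f_x'$ across $w=x$ (the paper's $E_{12}$), and that piece consequently contributes not only $\sum_i\bE[|\xi_i|^3]$ but also $\sum_i\|\xi_i\|_2\|\barD_{1,x}-\barD_{1,x}^{(i)}\|_2$ and $\sum_i\|\xi_i\|_3\|\bar{\frakD}_2-\bar{\frakD}_2^{(i)}\|_{3/2}$, not the single term you allot it. Since those quantities already appear on the right-hand side of the lemma, the total budget balances and the argument goes through; the misallocation is worth being aware of if you try to carry the calculation out in detail.
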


Combining \lemsref{lem1} and  \lemssref{lem2} with \eqref{two_quantities_to_bdd}, we get
\begin{multline} \label{last_lap}
\Big|P\Big(W_b + \barD_{1, x} > x\Big(1 +  \barDRplaceholder \Big)^{1/2}\Big) - \barPhi(x)\Big| \leq x \Bigg|\bE[\frakD_2 f_x(W_b)]\Bigg|  +   C(m) e^{-c(m)x}  \times \\
\Bigg\{
 \bE\Big[(1 +e^{W_b})  \frakD_2^2\Big]  + \| D_1\|_2 +
\sum_{i=1}^n \Bigg(\bE[|\xi_i|^3] +
  \Big\|\xi_{ i}\Big\|_2 \Big\| D_{1} - D_{1}^{(i)}\Big\|_2  + 
  \Big\| \xi_{i}  \Big\|_3 \Big\| \frakD_2 -  \frakD_2^{(i)} \Big\|_{3/2} \Bigg)
\Bigg\}
\end{multline}
by \propertyref{censoring_property} and $\|\bar{D}_{1, x}\| \leq \|D_1\|_2$. At this point, we define the  typical quantities:
\[
 \quad \beta_2 \equiv  \sum_{i =1}^n \mathbb{E}[ \xi_i^2 I(|\xi_i| > 1 )]
\quad \text{ and } \quad \beta_3 \equiv \sum_{i=1}^n \bE[\xi_i^3 I(|\xi_i| \leq 1)],
\]
which has the property
$
\beta_2 + \beta_3 \leq \sum_{i=1}^n\bE[|\xi_i|^3]
$. The following bounds allow us  to arrive at  \eqref{intermediate_nonunif_bdd_2} from \eqref{last_lap}, 
\begin{equation} \label{D1l2norm_bdd}
\|D_1\|_2 \leq \frac{(m-1) \|h\|_2}{\sqrt{m(n-m+1)}}  \leq \frac{C(m)\|h\|_2}{\sqrt{n}}.
\end{equation}
\begin{equation} \label{D1minusD1i_l2_norm_bdd}
\|D_1 - D_1^{(i)}\|_2 \leq  \frac{\sqrt{2}(m-1)  \|h\|_2}{\sqrt{nm(n-m+1) }}  \leq \frac{C(m)\|h\|_2}{n}.
\end{equation}
\begin{equation} \label{expD2square_bdd}
\bE[\frakD_2^2] \leq   C(m)  \Bigg( \frac{\bE[|g|^3]}{\sqrt{n}} +   \frac{\|g\|_3 \|h\|_3}{\sqrt{n}}\Bigg)
\end{equation}
\begin{equation} \label{expect_expWbD22_bdd}
\bE[e^{W_b}\frakD_2^2] \leq C(m)  \Bigg( \frac{\bE[|g|^3]}{\sqrt{n}} +   \frac{\|g\|_3 \|h\|_3}{\sqrt{n}}\Bigg)
\end{equation}
\begin{equation} \label{expect_D2fxWb_bdd}
\bigg|\bE[\frakD_2 f_x(W_b)]\bigg| \leq C(m) e^{-x} \Bigg( \frac{\bE[|g|^3]}{\sqrt{n}} +   \frac{\|g\|_3 \|h\|_3}{\sqrt{n}}\Bigg)
\end{equation}
\begin{equation} \label{D2minusD2i_l3over2_norm_bdd}
\|\frakD_2 - \frakD_2^{(i)}\|_{3/2} \leq C(m) \Bigg( \frac{\|g\|_3^2}{n}  +  \frac{\|g\|_3 \|h\|_3}{n}\Bigg)
\end{equation}
These bounds are proved as follows:

\eqref{D1l2norm_bdd} and \eqref{D1minusD1i_l2_norm_bdd}: The proofs can be found in   \citet[Lemma 10.1]{chen2011normal}.

\eqref{expD2square_bdd}: From \eqref{placeholder_D2_def}, we have
\begin{align*}
\bE[\frakD_2^2] &\leq   C \bE[\delta_{0, b}^2  + \bardelta_{2, b}^2 +  n^{-1} + \beta_2^2] \\
&\leq C \bE[\delta_{0, b}^2  + |\delta_{2, b}| +  n^{-1} + \beta_2]  \text{ since $\bardelta_{2, b}, \beta_2 \leq 1$}\\
&\leq C (\sum_{i=1}^n\bE[|\xi_i|^3] + \|\delta_{2, b}\|_2 + n^{-1}) \text{ by \eqref{bdd_exp_delta0b_sq},  $\beta_2 \leq \sum_{i=1}^n \bE[|\xi_i|^3]$ } \\
&\leq C(m)  \Bigg( \frac{\bE[|g|^3]}{\sqrt{n}} +   \frac{\|g\|_3 \|h\|_3}{\sqrt{n}}\Bigg)\text{ by \eqref{delta2b_l2_bdd} and $\|g\|_3 \|h\|_3 \geq 1$}.
\end{align*}

\eqref{expect_expWbD22_bdd}:  From \eqref{placeholder_D2_def} again, we have
\begin{align*}
&\bE[e^{W_b}\frakD_2^2] \\
&\leq C \bE[e^{W_b} (\delta_{0, b}^2  + |\delta_{2, b}| +  n^{-1} + \beta_2)]  \text{ since $\bardelta_{2, b}, \beta_2 \leq 1$}\\
&\leq C \Big(\sum_{i=1}^n\bE[|\xi_{ i}|^3] +  \|\delta_{2, b}\|_2 + n^{-1}\Big)\\
& \hspace{1cm} \text{by \eqref{bdd_exp_ew_delta0b}, Bennett's inequality (\lemref{bennett_censored}) and $\beta_2 \leq \sum_{i=1}^n \bE[|\xi_i|^3]$}\\
&\leq C(m)  \Big\{ \frac{\bE[|g|^3]}{\sqrt{n}} +   \frac{\|g\|_3 \|h\|_3}{\sqrt{n}}\Big\} \text{ by \eqref{delta2b_l2_bdd}}
\end{align*}

\eqref{expect_D2fxWb_bdd}: From \eqref{placeholder_D2_def} again, we get
\begin{align*}
\bE[\frakD_2 f_x(W_b)] &\leq C \Big\{ |\bE[f_x(W_b)(\delta_{0, b}  + \bardelta_{2, b})] |+  e^{-x}( n^{-1/2} + \beta_2) \Big\} \text{ by  \lemref{expect_fxWB}}\\
&\leq C(m) e^{-x} \Bigg\{  \frac{\bE[|g|^3]}{\sqrt{n}} +   \frac{\|g\|_3 \|h\|_3}{\sqrt{n}}\Bigg\} \text{ by \eqref{bdd_fx_delta0b}, \eqref{delta2b_l2_bdd} and  \eqref{bdd_fx_delta2b}} 
\end{align*}

\eqref{D2minusD2i_l3over2_norm_bdd}: Since 
$
\frakD_2 - \frakD_2^{(i)} = d_n^2 (\xi_{b, i}^2 + \bardelta_{2, b} -  \bardelta_{2, b}^{(i)})
$,
\[
\|\frakD_2 - \frakD_2^{(i)}\|_{3/2} \leq C \Bigg\{ (\bE[|\xi_{b, i}|^3])^{2/3}  + \|\delta_{2, b} -  \delta_{2, b}^{(i)}\|_{3/2}\Bigg\}.
\]
In our related work \citet[Appendix E]{leungshaounif}, we have already shown that 
\[
\|\delta_{2, b} -  \delta_{2, b}^{(i)}\|_{3/2} \leq  \frac{C(m)\|g\|_3 \|h\|_3}{n},
\]
so we get
\[
\|\frakD_2 - \frakD_2^{(i)}\|_{3/2} \leq C(m) \Bigg( \frac{\|g\|_3^2}{n}  +  \frac{\|g\|_3 \|h\|_3}{n}\Bigg).
\]
(Note that $\delta_{2, b} -  \delta_{2, b}^{(i)}$ is precisely the quantity "$A + B$" appearing in \citet[Appendix E.2]{leungshaounif})

\hspace{3cm}

It remains to prove  \lemsref{lem1} and  \lemssref{lem2}, which is the focus next.

\subsection{Proof of \lemref{lem1}} \label{sec:pf_lem_1} 

If $\sum_{i=1}^n \bE[|\xi_i|^3] \geq 2$, we will have 
\begin{align*}
&P\left( x \left(1 +\bar{\frakD}_2 \right)^{1/2}  \leq W_b + \bar{D}_{1, x} \leq x \left( 1+   \bar{\frakD}_2/2\right)\right) \\
&\leq P\Big(\frac{3\frakcm  x }{4} \leq W_b + \bar{D}_{1, x}\Big)\\
&\leq P\Big(W_b \geq  \frac{\frakcm x}{2} \Big) 
\leq e^{- \frakcm x/2} \mathbb{E}[e^{W_b}] \leq  C  e^{- \frakcm x/2} \sum_{i=1}^n \bE[|\xi_i|^3],
\end{align*}
since the Bennett's inequality (\lemref{bennett_censored}) implies $\mathbb{E}[e^{W_b}] \leq C  \bE[|\xi_i|^3]$ for some absolute constant $C > 0$, and  \lemref{lem1}  follows because $x \geq 1$.

If 
$\sum_{i=1}^n \bE[|\xi_i|^3] < 2$,
since $x (1 + \bar{\frakD}_2)^{1/2} \geq \frac{3 \frakcm x}{4}$, it must be less that 
\begin{align}
&P\left( x \left(1 + \bar{\frakD}_2 \right)^{1/2}  \leq W_b + \bar{D}_{1, x} \leq x \left( 1+   \bar{\frakD}_2 /2\right)\right)  \notag\\
&e^{- (3 \frakcm x)/8} \mathbb{E}\left[e^{ (W_b + \bar{D}_{1, x})/2 }I\left\{x \Big(1 + \bar{\frakD}_2\Big)^{1/2}  \leq W_b  + \bar{D}_{1, x} \leq x \Big( 1+   \bar{\frakD}_2/2\Big)\right\}\right]\notag\\
&\leq e^{-( \frakcm  x)/4} \mathbb{E}\left[e^{ W_b /2 }I\left\{x \Big(1 + \bar{\frakD}_2\Big)^{1/2}  \leq W_b  + \bar{D}_{1, x} \leq x \Big( 1+  \bar{\frakD}_2/2\Big)\right\}\right]\notag\\
&\leq e^{-(\frakcm x)/4} \bE\Bigg[e^{W_b/2} I\Big(x + \frac{x(\bar{\frakD}_2 -  \bar{\frakD}_2^2)}{2} - \bar{D}_{1, x}  \leq W_b \leq x + \frac{x  \bar{\frakD}_2}{2} - \bar{D}_{1, x}\Big)\Bigg], \label{must_be_less_than}
\end{align}
the last inequality follows from the fact that
\[
1 + s/2 - s^2/2 \leq (1 + s)^{1/2} \text{ for all } s \geq -1.
\]
Continuing from \eqref{must_be_less_than}, by the exponential randomized concentration inequality for a sum of censored variables (\lemref{modified_RCI_bdd}), we have
\begin{align}
&e^{(\frakcm x)/4}P\left( x \left(1 + \bar{\frakD}_2 \right)^{1/2}  \leq W_b + \bar{D}_{1, x} \leq x \left( 1+   \bar{\frakD}_2/2\right)\right)\notag\\
&\leq 
\left( \mathbb{E}\left[e^{ W_b}\right] \right)^{1/2}\exp\left( - \frac{1}{16(\beta_2 + \beta_3)^2}\right) \notag\\
&+ C e^{(\beta_2 + \beta_3)/8}
\Biggl\{ \sum_{i=1}^n
\mathbb{E} [  |\xi_{b,  i}|e^{W_b^{(i)} /2} (|\bar{D}_{1, x} -\bar{D}_{1, x}^{(i)} | + x |\bar{\frakD}_2 - \bar{\frakD}_2^{(i)}| )  ] 
\notag\\
&
+ \mathbb{E}\left[|W_b|e^{W_b/2 }\left(x \bar{\frakD}_2^2 + \beta_2 + \beta_3\right) \right]\notag\\
& +   \sum_{i=1}^n  \Big|\bE[ \xi_{b, i}]\Big| \bE\left[e^{ W_b^{(i)} /2 }\left(x (\bar{\frakD}_2^{(i)})^2  + \beta_2 + \beta_3\right)\right]
\Biggr\}, \label{1st_applied_RCI}
\end{align}
where we have used the fact that $|\bar{\frakD}_2|\vee| \bar{\frakD}_2^{(i)}| \leq 1$, which implies
\begin{align*}
|\bar{\frakD}_2^2 - (\bar{\frakD}_2^{(i)})^2| &= |(\bar{\frakD}_2- \bar{\frakD}_2^{(i)})(\bar{\frakD}_2 + \bar{\frakD}_2^{(i)})| \leq 2| \bar{\frakD}_2- \bar{\frakD}_2^{(i)}|.
\end{align*}
It remains to bound the  terms on the right hand side of \eqref{1st_applied_RCI}.

 First,
\begin{equation} \label{bennett_exp_ewb}
\bE[e^{W_b} ] \leq C \text{ for some } C >0 \text{ by  Bennett's inequality (\lemref{bennett_censored})}
\end{equation}
and
\begin{align}
\exp\left(\frac{-1}{16 (\beta_2 + \beta_3)^2} \right) 
%&= \exp\left(\frac{-1}{16 (\beta_2 + \beta_3)^2}  - \log(\beta_2 + \beta_3) \right) (\beta_2 + \beta_3) \notag\\
%&=   \exp\left( - \frac{1 + 16(\beta_2 + \beta_3)^2 \log(\beta_2 + \beta_3)}{16 (\beta_2 + \beta_3)^2}   \right) (\beta_2 + \beta_3)\\
&\leq C (\beta_2 + \beta_3) \leq C \sum_{i=1}^n \bE[|\xi_i|^3].
\end{align}
Secondly, 
\begin{align}
&\mathbb{E} [  |\xi_{b,  i}|e^{W_b^{(i)} /2} (|\bar{D}_{1, x} -\bar{D}_{1, x}^{(i)} | + x |\bar{\frakD}_2 - \bar{\frakD}_2^{(i)}| )  ]  \notag\\
%another line
&\leq \| \xi_{b,  i} e^{W_b^{(i)} /2}\|_2 \|\barD_{1, x} -{D}_{1, x}^{(i)}\|_2 + x \| \xi_{b,  i} e^{W_b^{(i)} /2} \|_3 \|\bar{\frakD}_2 - \bar{\frakD}_2^{(i)}) \|_{3/2} \notag\\
% another line
&\leq C   \Big\{\Big\|\xi_{b, i}\Big\|_2 \Big\|{D}_{1, x} - {D}_{1, x}^{(i)}\Big\|_2  + x\| \xi_{b,  i}  \|_3 \|\bar{\frakD}_2 - \bar{\frakD}_2^{(i)}) \|_{3/2}\Big\} , \label{bdd_sth_else1}
\end{align}
where the last inequality uses that for any $2 \leq p \leq 3$, $\| \xi_{b,  i} e^{W_b^{(i)} /2}\|_p \leq C \| \xi_{b,  i}\|_p$ by the independence of $\xi_{b, i}$  and $e^{W_b^{(i)}/2}$, as well as Bennett's inequality (\lemref{bennett_censored}). Thirdly, since 
$
|W_b|/2 \leq e^{|W_b|/2} \leq e^{W_b/2} + e^{-W_b/2}
$,
by the Bennett's inequality (\lemref{bennett_censored}) again,
\begin{equation}
\mathbb{E}\left[|W_b|e^{W_b/2 }\left(x \bar{\frakD}_2^2 + \beta_2 + \beta_3\right) \right] \leq C\Big( x\bE[(1 +e^{W_b})  \bar{\frakD}_2^2]  + \sum_{i=1}^n \bE[|\xi_i|^3] \Big)    \label{bdd_sth_else2}
\end{equation}
Lastly, by  \lemref{exp_xi_bi_bdd},  Bennett's inequality (\lemref{bennett_censored}) and the current assumption that  $\beta_2 + \beta_3 \leq \sum_{i=1}^n \bE[|\xi_i|^3] < 2$, we have
\begin{equation}
 \sum_{i=1}^n  \Bigg|\bE[ \xi_{b, i}]\Bigg| \bE\left[e^{ W_b^{(i)} /2 } \underbrace{\left(x (\bar{\frakD}_2^{(i)})^2  + \beta_2 + \beta_3\right)}_{\leq Cx \text{ as } x \geq 1}\right] 
 \leq C x \sum_{i=1}^n\bE[ \xi_i^3 ] . \label{bdd_sth_else3}
\end{equation}
Combining \eqref{1st_applied_RCI}-\eqref{bdd_sth_else3} with $x\geq 1$,  \lemref{lem1} is proved when $\sum_{i=1}^n \bE[|\xi_i|^3] < 2$.

\subsection{Proof of \lemref{lem2}} \label{app:pf_lem_2} 
We shall equivalently bound
\begin{equation} \label{bdd_with_stein}
|P( W_b  + \barD_{1,x} - \frac{x}{2} \barDRplaceholder  \leq x) - \Phi(x)|.
\end{equation}

We first let $X_1^*, \dots, X_n^*$ be independent copies of $X_1, \dots, X_n$ and define
\begin{multline*}
D_{1, i^*}\equiv D_1 (X_1, \dots, X_{i-1}, X_i^*, X_{i+1}, \dots, X_n) \text{ and }\\
 \frakD_{2, i^*} \equiv \frakD_2(X_1, \dots, X_{i-1}, X_i^*, X_{i+1}, \dots, X_n) \text{ for each } i \in \{1,\dots, n\}, 
\end{multline*}
which are versions of $D_1$ and $\frakD_2$ with $X_i^*$ replacing $X_i$ as input. In analogy to \eqref{D1x_upper_censored} and \eqref{placeholder_D2_def_censored}, also define their correspondingly censored versions
\begin{multline*}
\barD_{1, i^*, x}\equiv D_{1, i^*} I\left(|D_{1, i^*}| \leq \frac{\frakcm x}{4} \right) +\frac{\frakcm x}{4} I\left(D_{1, i^*} >\frac{\frakcm x}{4}\right) - \frac{\frakcm x}{4} I\left(D_{1, i^*} < - \frac{ \frakcm  x}{4}\right) \\
\text{ and }\\
 \bar{\frakD}_{2, i^*} \equiv   \frakD_{2, i^*} I\Bigg(\frac{9\frakc_m^2}{16}- 1 \leq  \frakD_{2, i^*}   \leq 1 \Bigg)
 +I\Bigg( \frakD_{2, i^*} >1\Bigg) + \Big(\frac{9\frakc_m^2}{16}- 1 \Big)I\Bigg( \frakD_{2, i^*}   < \frac{9\frakcm^2}{16}- 1\Bigg)
\end{multline*}
By letting
\begin{equation} \label{Delta_def}
\Delta \equiv \bar{D}_{1, x}  -\frac{x \bar{\frakD}_2 }{2} \text{ and }
\Delta_{i^*} \equiv \bar{D}_{1, i^*, x}  - \frac{x \bar{\frakD}_{2, i^*} }{2},
\end{equation}
one can write the difference in \eqref{bdd_with_stein} as 
\begin{align}
P(W_b+ \Delta \leq x) - \Phi(x) =  \; &\mathbb{E}\left[f_x'(W_b + \Delta)\right] - \mathbb{E}\left[ W_b   f_x(W_b  + \Delta)\right]  - \mathbb{E}\left[ \Delta  f_x(W_b + \Delta)\right] \notag\\
=  \; &E_1 + E_2 + E_3
%& - \sum_{i =1}^n \mathbb{E} [\xi_{b, i} f_x( W_b^{(i)} +\Delta_{i^*}) ] \label{termb}\\
%& + \mathbb{E}[ f_x'(W_b  + \Delta)] \sum_{i=1}^n \bE[(\xi_i^2 - 1) I(|\xi_i| >1)] \label{terma} \\
%&- \mathbb{E}[ \Delta f_x(W_b + \Delta) ]\label{termc},
\end{align}
where 
\begin{align*}
E_1 &\equiv   \sum_{i=1}^n   \mathbb{E} \left[  \int_{-1}^1  \left\{ f_x'(W_b+ \Delta) - f_x'(  W_b^{(i)}  + \Delta_{i^*} + t )  \right\} k_{b, i} (t)  dt \right] \\
E_2 &\equiv    \sum_{i=1}^n \mathbb{E} \left[ \xi_{b, i} \left\{  f_x(W_b + \Delta_{i^*}) - f_x(W_b + \Delta)  \right\}\right], \\
E_3 &\equiv    - \sum_{i =1}^n \mathbb{E} [\xi_{b, i} f_x( W_b^{(i)} +\Delta_{i^*}) ]  +  \mathbb{E}[ f_x'(W_b  + \Delta)] \sum_{i=1}^n \bE[(\xi_i^2 - 1) I(|\xi_i| >1)]- \mathbb{E}[ \Delta f_x(W_b + \Delta) ], 
\end{align*}
with $k_{b, i}$ defined as 
\begin{equation*} \label{bar_K_fn_def}
k_{b, i} (t) \equiv \mathbb{E}\left[\xi_{b, i} \{  I(0 \leq t \leq  \xi_{b, i}) - I(\xi_{b, i} \leq t <0)\}\right],
\end{equation*}
which has the properties
\begin{equation}\label{barK_property}
 \int_{-1}^1 k_{b, i}i (t) dt = \mathbb{E}[\xi_{b, i}^2] \leq \bE[\xi_i^2] \quad \text{ and } \quad \int_{-1}^1 |t| k_{b, i} (t) dt = \frac{   \mathbb{E}[|\xi_{b, i}|^3]}{2} \leq \frac{\bE[|\xi_i|^3]}{2}.
\end{equation}
We will establish that  
\begin{equation} \label{E1bdd}
|E_1| \leq   C e^{-c(m)x} \Bigg\{  \sum_{i=1}^n \bE[|\xi_i|^3]+ \sum_{i=1}^n \Big(  \|\xi_i\|_2  \| \bar{D}_{1,x} - \bar{D}_{1, x}^{(i)}\|_2 +  \|\xi_i\|_3 \| \bar{\frakD}_2 - \bar{\frakD}_2 ^{(i)}\|_{3/2}   \Big)\Bigg\},
\end{equation}
\begin{equation}
|E_2 |\leq C e^{-c x} \sum_{i=1}^n \Bigg( \|\xi_{b, i}\|_2 \|\bar{D}_{1,x} - \bar{D}_{1,x}^{(i)} \|_2 +  \|\xi_{b, i}\|_3  \| \bar{\frakD}_2 - \bar{\frakD}_2 ^{(i)}\|_{3/2}   \Bigg) \label{E2bdd}.
\end{equation}
and
\begin{equation}
|E_3 |\leq   C (m)e^{-cx} \Bigg(   \sum_{i=1}^n \bE[|\xi_i|^3]+ \|\barD_{1, x}\|_2 +  \bE[ (1 + e^{W_b})\frakD_2^2  ]\Bigg) + x \Big|\bE[\frakD_2  f_x(W_b)]\Big| \label{E3bdd},
\end{equation}
from which \lemref{lem2} can be concluded.

Define,
 for any pair $1 \leq i , j \leq n$,
\[
D_{1, i^*}^{(j)} \equiv   \begin{cases} 
   D_1^{(j)} ( X_1, \dots, X_{i - 1}, X_i^*, X_{i+1}, \dots, X_{j-1}, X_{j+1}, \dots, X_n ) & \text{if } i < j ; \\
  D_1^{(j)} ( X_1, \dots, X_{j-1}, X_{j+1},  \dots, X_{i - 1}, X_i^*, X_{i+1}, \dots,  X_n )      & \text{if } j < i ; \\
    D_1^{(j)} ( X_1,  \dots, X_{i - 1},  X_{i+1}, \dots,  X_n )      & \text{if } i = j, \\
  \end{cases}
\]
and
\[
\frakD_{2, i^*}^{(j)} \equiv   \begin{cases} 
   \frakD_2^{(j)} ( X_1, \dots, X_{i - 1}, X_i^*, X_{i+1}, \dots, X_{j-1}, X_{j+1}, \dots, X_n ) & \text{if } i < j ;\\
   \frakD_2^{(j)} ( X_1, \dots, X_{j-1}, X_{j+1},  \dots, X_{i - 1}, X_i^*, X_{i+1}, \dots,  X_n )      & \text{if }  j < i; \\
     \frakD_2^{(j)} ( X_1,  \dots, X_{i - 1},  X_{i+1}, \dots,  X_n )      & \text{if } i = j,  \\
  \end{cases}
\]
i.e., $D_{1, i^*}^{(j)}$ and $\frakD_{2, i^*}^{(j)} $ are versions of $D_1^{(j)}$ and $\frakD_{2}^{(j)}$ with $X_i^*$ replacing $X_i$ as input; likewise, they have their censored variants 
\[
\barD_{1,i^*, x}^{(j)} \equiv D_{1, i^*}^{(j)} I\left(|D_{1, i^*}^{(j)}| \leq \frac{\frakcm x}{4} \right) +\frac{\frakcm x}{4} I\left(D_{1, i^*}^{(j)}>\frac{\frakcm x}{4}\right) - \frac{\frakcm x}{4} I\left(D_{1, i^*}^{(j)} < - \frac{ \frakcm  x}{4}\right).
\]
and
\begin{equation*}
\bar{\frakD}_{2 , i^*}^{(j)} \equiv \frakD_{2, i^*}^{(j)}  I\Bigg(\frac{9\frakc_m^2}{16}- 1 \leq \frakD_{2, i^*}^{(j)}   \leq 1 \Bigg) 
 +I\Bigg(\frakD_{2, i^*}^{(j)}>1\Bigg) + \Big(\frac{9\frakc_m^2}{16}- 1 \Big)I\Bigg(\frakD_{2, i^*}^{(j)}   < \frac{9\frakcm^2}{16}- 1\Bigg).
\end{equation*}
We will first prove the two bounds in \eqref{E1bdd} and \eqref{E2bdd}  for $E_1$ and $E_2$, which will use the following two properties:

\begin{property} \label{property:indep_copy_equal_distr}
For any $i , j\in \{1, \dots, n\}$,
 \begin{equation*}
 \|\bar{D}_{1, i^*, x} -\bar{D}_{1,  i^*, x}^{(j)} \|_2 =  \|\bar{D}_{1, x} -\bar{D}_{1, x}^{(j)} \|_2 \text{ and } \|\bar{\frakD}_{2, i^*} -\bar{\frakD}_{2, i^*}^{(j)} \|_{3/2} =  \|\bar{\frakD}_2 -\bar{\frakD}_2^{(j)} \|_{3/2}.
 \end{equation*}
\end{property}

\begin{property} \label{property:indep_copy_tri_ineq} For  any $i \in \{1, \dots, n\}$, 
\begin{equation*} 
\|\bar{D}_{1, x} - \bar{D}_{1, i^*, x}\|_2 \leq   2\|\bar{D}_{1, x} - \bar{D}_{1, x}^{(i)}\|_2 \text{ and }
\|\bar{\frakD}_2 - \bar{\frakD}_{2, i^*} \|_{3/2} \leq   2\|\bar{\frakD}_2- \bar{\frakD}_{2}^{(i)}\|_{3/2}
\end{equation*}
\end{property}

\begin{proof}[Proof of \propertysref{indep_copy_equal_distr} and  \propertyssref{indep_copy_tri_ineq}]
Note that \propertyref{indep_copy_equal_distr} is true because  $X_1^*, \dots, X_n^*$ are independent copies of $X_1, \dots, X_n$, and \propertyref{indep_copy_tri_ineq} is true because of 
the triangular inequalities
\[
\|\bar{D}_{1, x} - \bar{D}_{1, i^*, x}\|_2 \leq  \|\bar{D}_{1, x} - \bar{D}_{1, x}^{(i)}\|_2  + \underbrace{ \| \bar{D}_{1, x}^{(i)} - \bar{D}_{1, i^*, x}\|_2}_{ = \| \bar{D}_{1, x}^{(i)} - \bar{D}_{1, x} \|_2  }
\]
 and 
 \[
 \|\bar{\frakD}_2 - \bar{\frakD}_{2, i^*}\|_{3/2} \leq  \|\bar{\frakD}_2 - \bar{\frakD}_2 ^{(i)}\|_{3/2}  + \underbrace{ \| \bar{\frakD}_2 ^{(i)} - \bar{\frakD}_{2, i^*}\|_{3/2}}_{\| \bar{\frakD}_2 ^{(i)}- \bar{\frakD}_2 \|_{3/2}},
 \] 
 as well as \propertyref{indep_copy_equal_distr}.
\end{proof}

\subsubsection{Proof of the bound for $E_1$,  \eqref{E1bdd}} \label{app:bdd_E1}

Recall 
\[
E_1  = \sum_{i=1}^n   \mathbb{E} \left[  \int_{-1}^1  \left\{ f_x'(W_b+ \Delta) - f_x'(  W_b^{(i)}  + \Delta_{i^*} + t )  \right\} k_{b, i} (t)  dt \right].
\]
Let $g_x(w) = (w f_x(w))'$ be as defined in \eqref{gx_def}, and let 
\begin{equation*}
\eta_1 =  t + \Delta_{i^*} \text{ and } 
\eta_2 = \xi_{b, i} + \Delta.
\end{equation*}
By Stein's equation
 \eqref{steineqt},
 one can write 
\[
E_1 = E_{11} + E_{12},
\]
where 
\begin{align*}
E_{11}  
%&= \sum_{i=1}^n\bE  \int_{-1}^1\{(W_b - \Delta_{2n, x}) f_x (W_b - \Delta_{2n, x}) - (W_b^{(i)} - \Delta_{2n, x}^{(i)} +t)f_x (W_b^{(i)} - \Delta_{2n, x}^{(i)} +t) \} k_{b, i}(t)dt\\
&=  \sum_{i=1}^n  \int_{-1}^1 \bE \Big[ \int_{t + \Delta_{i^*}}^{\xi_{b, i} +  \Delta} g_x(W_b^{(i)} + u) du\Big] k_{b, i} (t) dt\\
&= \underbrace{\sum_{i=1}^n \int_{-1}^1 \bE\Bigg[ \int g_x(W_b^{(i)} + u) I( \eta_1 \leq u \leq \eta_2) du\Bigg] k_{b, i} (t) dt}_{E_{11.1}} \\
&\hspace{1cm} - \underbrace{\sum_{i=1}^n \int_{-1}^1 \bE\Bigg[ \int g_x(W_b^{(i)} + u) I(\eta_2\leq u \leq \eta_1) du\Bigg] k_{b, i} (t) dt}_{E_{11.2}}
\end{align*}
and
\begin{align*}
E_{12}  &= \sum_{i=1}^n \int_{-1}^1   
\Bigg\{P(W_b +  \Delta \leq x) - P(W_b^{(i)}  +\Delta_{i^*} + t \leq x)\Bigg\} k_{b, i}(t) dt.
\end{align*}

We first bound the integrand of $E_{11.1}$.
Using the identity
\begin{align*}
1 &= I(W_b^{(i)} + u \leq x - 1) + I(x -1 < W_b^{(i)} + u, u \leq 3x/4 ) + I(x -1 < W_b^{(i)} + u, u > 3x/4)\\
&\leq I(W_b^{(i)} + u \leq x - 1) + I(x -1 < W_b^{(i)} + u, W_b^{(i)} +1> x/4) + (x -1 < W_b^{(i)} + u, u > 3x/4)
\end{align*}
and the bounds for $g_x(\cdot)$ in \lemref{helping}, 
with $| \Delta| \leq \frac{x |\bar{\frakD}_2|}{2} + |\barNR| \leq   \underbrace{\frac{2 +\frakcm}{4}}_{< 3/4} x$ and
$1.6 \barPhi(x) \leq x e^{1/2-x}$,
\begin{align*}
%&\bE\Bigg[ \int g_x(W_b^{(i)} + u) I( t -  \bar{\Delta}_{2n, x}^{(i)} \leq u \leq \xi_{b, i} -  \bar{\Delta}_{2n, x}) du\Bigg] = 
&\Bigg|\bE\Big[\int g_x(W_b^{(i)} + u) I(\eta_1 \leq  u \leq \eta_2) du\Big]\Bigg|\\
&\leq  x e^{1/2 - x} \|\eta_2 - \eta_1\|_1 + (x + 2) \Big\{ \| I(W_b^{(i)} +1 > x/4) (\eta_2 - \eta_1) \|_1 +  \| I(\eta_2 > 3x/4)  (\eta_2 - \eta_1)\|_1 \Big\}\\
&\leq  x e^{1/2 - x} \|\eta_2 - \eta_1\|_1 + \frac{x+2}{e^{x/4 -1}} \|e^{W_b^{(i)}}(\eta_2 - \eta_1)\|_1 +
\frac{x+2}{e^{3x/4}} \|e^{\xi_{b, i} + \Delta} (\eta_2  - \eta_1) \|_1\\
&\leq  \Bigg(x e^{1/2 - x} + \frac{e (x+2)}{e^{(1 - \frakcm)x/4}} \Bigg)\|\eta_2 - \eta_1\|_1 + \frac{x+2}{e^{x/4 -1}} \|e^{W_b^{(i)}}(\eta_2 - \eta_1)\|_1 \\
&\leq \frac{C(x+2)}{e^{(1 - \frakcm)x/4}}\Bigg\{ |t| + \|\Delta_{i^*} - \Delta + \xi_{b, i}\|_1 +  \| e^{W_b^{(i)}}  (\Delta_{i^*} - \Delta + \xi_{b, i})\|_1 \Bigg\},
\end{align*}
where we have used the Bennett's inequality (\lemref{bennett_censored}) via $\|e^{W_b^{(i)}} t\|_1 \leq C |t|$ in the last line. Continuing, 
\begin{align}
&\Bigg|\bE\Big[\int g_x(W_b^{(i)} + u) I(\eta_1 \leq  u \leq \eta_2) du\Big]\Bigg|\notag\\
&\leq \frac{C(x+2)}{e^{(1 - \frakcm)x/4}}\Bigg\{ |t| + \Big\| x (\bar{\frakD}_{2, i^*} - \bar{\frakD}_2) + (\bar{D}_{1,  i^*, x}- \bar{D}_{1,x}) + \xi_{b, i}\Big\|_1\notag\\
&\hspace{4cm} +  \Bigg\| e^{W_b^{(i)}}  \Big[ x (\bar{\frakD}_{2, i^*} - \bar{\frakD}_2) + (\bar{D}_{1,  i^*, x}- \bar{D}_{1,x})+ \xi_{b, i}\Big]\Bigg\|_1 \Bigg\}\notag\\
& \leq Ce^{-c(m)x} \Bigg\{|t| + \|\xi_{b, i}\|_2   +  \|\bar{D}_{1, x}^{(i)}- \bar{D}_{1,x}\|_2 +   \|\bar{\frakD}_2^{(i)} - \bar{\frakD}_2\|_{3/2} \Bigg\}\label{integrand_R11_1_bdd},
\end{align}
where the last inequality uses $\|e^{W_b^{(i)}}\|_2 \vee  \|e^{W_b^{(i)}}\|_3 < C$ (by Bennett's inequality,  \lemref{bennett_censored}) and \propertyref{indep_copy_tri_ineq}. 
By a completely analogous argument, we also have 
\begin{multline}
\Bigg|\bE\Big[\int g_x(W_b^{(i)} + u) I(\eta_2 \leq  u \leq \eta_1) du\Big]\Bigg| \\
 \leq  Ce^{-c(m)x} \Bigg\{|t| + \|\xi_{b, i}\|_2   +  \|\bar{D}_{1, x}^{(i)}- \bar{D}_{1,x}\|_2 +   \|\bar{\frakD}_{2}^{(i)} - \bar{\frakD}_2\|_{3/2} \Bigg\} \label{integrand_R11_2_bdd}
\end{multline}
for the integrand of $E_{11.2}$. 
Combining  \eqref{integrand_R11_1_bdd} and \eqref{integrand_R11_2_bdd} and  integrating over $t$, we have
\begin{equation}
|E_{11}| 
 \leq C e^{-c(m)x}  \Bigg\{\sum_{i=1}^n \|\xi_{b, i}\|_3^3   +\sum_{i=1}^n \|\xi_{b, i}\|_2 \|\bar{D}_{1, x}^{(i)}- \bar{D}_{1,x}\|_2 +  \|\xi_{b, i}\|_3  \|\bar{\frakD}_2^{(i)}- \bar{\frakD}_2\|_{3/2} \Bigg\}  \label{R_11_bdd}
\end{equation}
where we have used \eqref{barK_property} and $\|\xi_{b, i}\|_2^3 \leq \|\xi_{b, i}\|^3_3$ and $\|\xi_{b, i}\|_2^2 \leq \|\xi_{b, i}\|_2 \leq \|\xi_{b, i}\|_3$.

For $E_{12}$, its integrand is bounded by
\begin{multline} \label{two_probs}
 P(x - \Delta - \xi_{b, i}\leq W_b^{(i)} \leq x  - \Delta_{i^*}- t )+ P( x - \Delta_{i^*}  - t \leq  W_b^{(i)} \leq  x - \Delta - \xi_{b, i})
\end{multline}
Since  $0 < \frakcm < 1$ implies that 
\[
\min(x - \Delta- \xi_{b, i}, x- \Delta_{i^*} - t ) \geq  x - \frac{(2+ \frakcm)x}{4}- 1  \geq \frac{x}{4}- 1\quad \text{ for } \quad  |t| \leq 1,
\]
by defining 
\begin{equation*}
W_b^{(i, j)} \equiv W_b - \xi_{b, i} - \xi_{b, j} \text{ and }
 \Delta^{(j)}_{i^*} \equiv \bar{D}_{1, i^*, x}^{(j)}  - \frac{x \bar{\frakD}_{2, i^*}^{(j)} }{2}  \text{ for } 1 \leq i\neq j \leq n,
\end{equation*}
 we can apply the randomized concentration inequality (\lemref{modified_RCI_bdd})    to bound \eqref{two_probs} as
\begin{align}
&C e^{-x/8} \Bigg\{\beta_2 + \beta_3 + 2 \sum_{\substack{j=1\\ j \neq i}}^n \bE\Big[ |\xi_{b, j}|e^{W_b^{(i, j)}/2} ( |\Delta - \Delta^{(j)}| +|\Delta_{i^*} - \Delta_{ i^*}^{(j)}|   )\Big] \notag\\
& \hspace{3cm}+ \bE\Big[ |W_b^{(i)}|e^{W_b^{(i)}/2} \Big( |\Delta - \Delta_{i^*}| + |\xi_{b, i}| + |t| + \beta_2 + \beta_3 \Big)\Big] \notag\\
& \hspace{3cm}+  \sum_{\substack{j=1\\ j \neq i}}^n  \Big|\bE[ \xi_{b, j}]\Big| \bE\Big[ e^{W_b^{(i,j)}/2} \underbrace{\Big(|t|  +|\xi_{b, i}| + |\Delta^{(j)} - \Delta_{i^*}^{(j)}| + \beta_2 + \beta_3\Big)}_{\leq C(1 +x)} \Big] 
\Bigg\}\notag\\
&\leq C e^{-x/8} \Bigg\{x\beta_2 + \beta_3  +  \sum_{ \substack{j=1\\j \neq i}}^n \Bigg[  \|\xi_{b, j}\|_2 ( \| \bar{D}_{1,x} - \bar{D}_{1, x}^{(j)}\|_2 )+ x \|\xi_{b, j}\|_3 ( \| \bar{\frakD}_2 - \bar{\frakD}_2^{(j)}\|_{3/2})  \Bigg] \notag\\
& \hspace{3cm}  +\| \bar{D}_{1,x} - \bar{D}_{1, i^*, x}\|_2 + x\|  \bar{\frakD}_2 - \bar{\frakD}_{2, i^*}\|_{3/2} + \|\xi_{b, i}\|_2 + |t| 
\Bigg\}\label{bdd_integrand_R12} \\
&\leq C e^{-x/8} \Bigg\{x\beta_2 + \beta_3  +  \sum_{ \substack{j=1\\j \neq i}}^n \Bigg[  \|\xi_{b, j}\|_2  \| \bar{D}_{1,x} - \bar{D}_{1, x}^{(j)}\|_2  + x \|\xi_{b, j}\|_3  \| \bar{\frakD}_2 - \bar{\frakD}_2^{(j)}\|_{3/2}  \Bigg] \notag\\
& \hspace{3cm} 2 \| \bar{D}_{1,x} - \bar{D}_{1, x}^{(i)}\|_2 +2 x\|  \bar{\frakD}_2 - \bar{\frakD}_{2}^{(i)}\|_{3/2} + \|\xi_{b, i}\|_2 + |t| 
\Bigg\}\label{bdd_integrand_R12_final},
\end{align}
where 
\begin{enumerate}
\item 
to attain \eqref{bdd_integrand_R12},  we have used that $ \Big|\bE[ \xi_{b, i}]\Big| \leq \bE[ \xi_i^2I(|\xi_i| > 1) ] $ from \lemref{exp_xi_bi_bdd}, $|W_b^{(i)}| e^{W_b^{(i)}/2} \leq 2(1 + e^{W_b^{(i)}})$, the Bennett's inequality (\lemref{bennett_censored}) and applied \propertyref{indep_copy_equal_distr} on $|\Delta_{i^*} - \Delta_{i^*}^{(j)}|$;
\item
to attain \eqref{bdd_integrand_R12_final}, we have used \propertyref{indep_copy_tri_ineq}.
\end{enumerate}
From \eqref{bdd_integrand_R12_final}, on integration with respect to $t$,   for absolute constants $C, c >0$,
\begin{multline} \label{R12_final_bdd}
|E_{12}| \leq C e^{-cx} \Bigg\{  \sum_{i=1}^n \bE[|\xi_i|^3]+ 
\sum_{i=1}^n  \|\xi_i\|_2   \| \bar{D}_{1,x} - \bar{D}_{1, x}^{(i)}\|_2 
+ \sum_{i=1}^n\|\xi_i\|_3 \| \bar{\frakD}_2 - \bar{\frakD}_2 ^{(i)}\|_{3/2} \Bigg\}
\end{multline}
by the properties of the K-function in  \eqref{barK_property}, $\|\xi_{b, i}\|_2^3 \leq \|\xi_{b, i}\|^3_3$ and $\|\xi_{b, i}\|_2^2 \leq \|\xi_{b, i}\|_{2} \leq \|\xi_{b, i}\|_3$.

Lastly, combining \eqref{R_11_bdd} and \eqref{R12_final_bdd}, we obtain \eqref{E1bdd}.

\subsubsection{Proof of bounds for $E_2$, \eqref{E2bdd}} \label{app:bdd_E2}

For $ x \geq 1$, given $|\Delta|\vee | \Delta_{i^*}| \leq (\frac{2 +\frakcm}{4}) x \leq \frac{3x}{4}$, by \eqref{fx'bdd} in \lemref{helping} and $|f'_x| \leq 1$ (\lemref{helping_unif}),
\begin{align*}
&|f_x(W_b +  \Delta_{i^*}) - f_x(W_b +  \Delta)| \\
&\leq  |f_x(W_b +  \Delta_{i^*}) - f_x(W_b +  \Delta)| \Bigg[ I\Bigg(W_b \leq \frac{x}{4} - 1\Bigg) + I\Bigg(W_b > \frac{x}{4} - 1\Bigg)\Bigg]\\
&\leq C \Big(e^{1/2-x} +I(W_b > x/4 - 1) \Big)\Big(|\bar{D}_{1, x} - \bar{D}_{1, i^* , x}| + x|\bar{\frakD}_2- \bar{\frakD}_{2, i^*} |\Big)\\
&\leq C\Big(e^{-x} + e^{-x/4} e^{W_b} \Big) \Big(|\bar{D}_{1, x} - \bar{D}_{1, i^*, x}| + x|\bar{\frakD}_2- \bar{\frakD}_{2, i^*} | \Big).
\end{align*}
Hence, 
\begin{align*}
|E_2| 
&\leq C_1e^{-x}  \sum_{i=1}^n ( \|\xi_{b, i}\|_2 \|\bar{D}_{1, x} - \bar{D}_{1, i^*, x} \|_2 +  x \|\xi_{b, i}\|_3 \| \bar{\frakD}_2- \bar{\frakD}_{2, i^*} \|_{3/2} ) +\\
&\hspace{2cm} C_2e^{-x/4}\sum_{i=1}^n (\|\xi_{b, i} e^{\xi_{b, i}}\|_2 \|\bar{D}_{1, x} - \bar{D}_{1, i^*, x} \|_2 +x \|\xi_{b, i} e^{\xi_{b, i}}\|_3  \| \bar{\frakD}_2- \bar{\frakD}_{2, i^*} \|_{3/2})\\
&\leq C e^{-c x} \sum_{i=1}^n \Bigg( \|\xi_{b, i}\|_2 \|\bar{D}_{1, x} - \bar{D}_{1, x}^{(i)} \|_2 +  \|\xi_{b, i}\|_3  \| \bar{\frakD}_2- \bar{\frakD}_2^{(i)} \|_{3/2} \Bigg),
\end{align*}
where  we have applied Bennett's inequality (\lemref{bennett_censored}) on $e^{W_b^{(i)}}$ in the first inequality, and  $e^{\xi_{b,i}} \leq e$, and \propertyref{indep_copy_tri_ineq} in the second. This establishes \eqref{E2bdd}.

\subsubsection{Proof of the bound for $E_3$,  \eqref{E3bdd}}
We will form bounds for each of
\begin{multline*}
   \sum_{i =1}^n \mathbb{E} [\xi_{b, i} f_x( W_b^{(i)} +\Delta_{i^*}) ]  \\
  \mathbb{E}[ f_x'(W_b  + \Delta)] \sum_{i=1}^n \bE[(\xi_i^2 - 1) I(|\xi_i| >1)] \\
 \text{ and } \;\
\mathbb{E}[ \Delta f_x(W_b + \Delta) ],
\end{multline*}
%\[
% - \sum_{i =1}^n \mathbb{E} [\xi_{b, i} f_x( W_b^{(i)} +\Delta_{i^*}) ]  +  \mathbb{E}[ f_x'(W_b  + \Delta)] \sum_{i=1}^n \bE[(\xi_i^2 - 1) I(|\xi_i| >1)]- \mathbb{E}[ \Delta f_x(W_b + \Delta) ],
%\]
which can conclude \eqref{E3bdd}.

Bounding the first two terms is relatively simple. For the first term, by  the independence between $\xi_{b, i}$ and $W_b^{(i)} + \Delta_{i^*}$, Bennett's inequality (\lemref{bennett_censored}),  $\Delta_{i^*} \leq 3x/4$, $0 < f_x \leq 0.63$ in  \lemref{helping_unif}, and \eqref{fxbdd} in \lemref{helping},
 \begin{align} 
 &\sum_{i =1}^n |\mathbb{E} [\xi_{b, i} f_x( W_b^{(i)} + \Delta_{i^*}) ]| \notag\\
&\leq \sum_{i =1}^n  \mathbb{E} \Big[ (|\xi_i| - 1) I(|\xi_i| > 1) \Big]\E\left[ f_x( W_b^{(i)} +\Delta_{i^*}) \left( \underbrace{ I(W_b^{(i)} \geq x/4 -1)}_{\leq e^{W_b^{(i)} +1} \cdot e^{- x/4}} + I( W_b^{(i)} < x/4 - 1)\right) \right] \notag\\
&\leq \sum_{i =1}^n \mathbb{E} [ \xi_i^2 I(|\xi_i |> 1) ]\left(  C  e^{- x/4} +  1.7 e^{-x}\right) \leq C e^{-x/4}\beta_2\label{extra_term_1_bbdb}.
\end{align}
For the second term, by  Bennett's  inequality (\lemref{bennett_censored}), that $\Delta \leq 3x/4$, $|f_x'| \leq 1$ in \lemref{helping_unif}, and \eqref{fx'bdd} in  \lemref{helping},
\begin{align}
&\left|\mathbb{E}[ f_x'(W_b+ \Delta)] \sum_{i=1}^n \bE[(\xi_i^2 -1)I(|\xi_i| >1)] \right|\notag \\
&\leq \beta_2 \E \left[ |f_x'(W_b + \Delta)| \{ I(W_b < x/4 - 1) + I(W_b \geq x/4 -1)\} \right] \notag \\
&\leq \beta_2  ( e^{1/2 -x} +   C e^{-x/4}) \leq C e^{-x/4}\beta_2.   \label{extra_term_3_bbdb}
\end{align}
Both \eqref{extra_term_1_bbdb} and \eqref{extra_term_3_bbdb} are  less than $C e^{-cx}\sum_{i=1}^n \bE[|\xi_i|^3]$, forming a part of \eqref{E3bdd}.

To finish proving \eqref{E3bdd}, it remains to show the bound
\begin{equation} \label{3rd_term_bdd_E3}
 \Big|\mathbb{E}[ \Delta f_x(W_b + \Delta) ]\Big|  \leq C(m) e^{-cx} \Bigg(    \|\barD_{1, x}\|_2 + \bE[ (1 + e^{W_b}) \frakD_2^2  ]\Bigg) + x \Bigg|\bE[\frakD_2 f_x(W_b)]\Bigg|,
\end{equation}
for the last term,
which is more delicate to derive. We first write
\begin{multline}
 \Big|\mathbb{E}[ \Delta f_x(W_b + \Delta) ]\Big| 
 = \Bigg| \Delta \int_0^{\Delta} f_x'(W_b +t) dt +  \mathbb{E}[ \Delta f_x(W_b ) ]\Bigg|\\
  \leq  \Bigg| \Delta \int_0^{\Delta} f_x'(W_b +t) dt  \Bigg| +  \Bigg|\mathbb{E}[ \Delta f_x(W_b ) ] \Bigg|, \label{tri_ineq_on_deltafW}
\end{multline}
and will control the two terms on the right hand side separately.

For the first right-hand-side term in \eqref{tri_ineq_on_deltafW}, since $\Delta \leq  3x/4$, we have
\begin{align}
&\Bigg| \Delta \int_0^{\Delta} f_x'(W_b +t) dt  \Bigg| \notag\\
&\leq  2e^{1/2 - x}  \bE[\bar{D}_{1, x}^2 + x^2  \bar{\frakD}_2^2/4] +  2\bE[ \underbrace{I(W_b > x/4 - 1 )}_{\leq e^{W_b +1-x/4}} ( \bar{D}_{1, x}^2 + x^2  \bar{\frakD}_2^2/4)] \notag\\
& \hspace{3cm}\text{ by \eqref{fx'bdd} in \lemref{helping} and that $\Delta^2 \leq 2 (\barD_{
1, x}^2 + x^2 \bar{\frakD}_2^2/4)$}  \notag\\
&\leq C_1  e^{-x}  (\bE[ \bar{D}_{1, x}^2] +x^2 \bE[\bar{\frakD}_2^2])+ C_2 e^{-x/4}  x^2 \bE[e^{W_b} \barD_{1, x}^2/x^2] + C_3 e^{-x/4} x^2\bE[ e^{W_b}\bar{\frakD}_2^2] \notag\\
&\leq  C e^{-x/4} (  x  \|\barD_{1, x}\|_2 + x^2 \bE[ (1 + e^{W_b})\bar{\frakD}_2^2  ]), \label{1st_RHS_term}
\end{align}
where \eqref{1st_RHS_term} is true because, with  \lemref{bennett_censored} and $|\barNR|/x \leq \frakcm/4 \leq 1/4$,
\[
  \bE[e^{W_b} \barD_{1, x}^2/x^2] \leq \|e^{W_b}\|_{2} \|\barD_{1, x}^2/x^2 \|_2 = C \sqrt{\bE[\barD_{1, x}^4/x^4]} \leq  \frac{C \|\barD_{1, x}\|_2}{x}.\
\]
and 
\[
\bE[ \bar{D}_{1, x}^2] = (x/4)^2 \bE\Bigg[ \frac{\bar{D}_{1, x}^2}{(x/4)^2}\Bigg] \leq (x/4) \bE[\barNR] \leq C x \|\bar{D}_{1, x}\|_2
\]

For the second right-hand-side term in \eqref{tri_ineq_on_deltafW}, using $0 < f_x(w) \leq 0.63$ (\lemref{helping_unif}), 
\begin{align}
 &|\mathbb{E}[ \Delta f_x(W_b ) ]| \notag\\
& \leq \bE[ |\barD_{1, x} f_x(W_b)|]  + \frac{x}{2}  \Big|\bE[\bar{\frakD}_2 f_x(W_b)] \Big|\notag \notag\\
&\leq  0.63 e^{1-x}  \bE[ |\barD_{1, x}| e^{W_b}]  +  1.7e^{-x} \|\barD_{1,x}\|_2 +  \frac{x}{2}  \Big|\bE[\bar{\frakD}_2 f_x(W_b)] \Big| \notag \\
& \hspace{0.5cm}  \text{ by $0 < f_x(w) \leq 0.63$, \eqref{fxbdd} from \lemsref{helping_unif}, \lemssref{helping}  and $I(W_b > x-1) \leq e^{W_b +1 -x}$} \notag\\
&\leq C e^{-x } \|\barNR\|_2 +  \frac{x}{2}  \Big|\bE[\bar{\frakD}_2 f_x(W_b)] \Big| \text{ by Bennett's inequality (\lemref{bennett_censored})}  \notag\\
&\leq C (m) \Big( e^{-x } \|\barNR\|_2  + x e^{-x} \bE[\frakD_2^2 (1 + e^{W_b})]\Big) + \frac{x}{2} \bE[\frakD_2 f_x(W_b)], \label{2nd_RHS_term}
\end{align}
which can conclude \eqref{3rd_term_bdd_E3} in combination with  \eqref{tri_ineq_on_deltafW} and \eqref{1st_RHS_term}.
The last inequality \eqref{2nd_RHS_term} comes as follows: 
Write
\[
\bE[\bar{\frakD}_2 f_x(W_b)] = \bE[ (\bar{\frakD}_2 - \frakD_2)f_x(W_b)] + \bE[ \frakD_2 f_x(W_b)],
\]
Now,  defining $\frak{C}_m = 1 - \frac{9 \frakcm^2}{16}$ (where $0 < \frak{C}_m < 1$),
\begin{align*}
&| \bE[ (\bar{\frakD}_2 - \frakD_2)f_x(W_b)] |\\
&\leq \bE[|\bar{\frakD}_2 - \frakD_2|f_x(W_b) I(W_b \leq x -1)]  +  \bE[|\bar{\frakD}_2 - \frakD_2|f_x(W_b) I(W_b > x -1)]\\
&\leq 1.7e^{-x} \bE \Bigg[ | \frakD_2- \frak{C}_m |  I\Big(| \frakD_2| > \frak{C}_m\Big)\Bigg] + 0.63 e^{1- x} \bE \Bigg[ | \frakD_2- \frak{C}_m |  I \Big(| \frakD_2| > \frak{C}_m\Big) e^{W_b} \Bigg] \\
& \hspace{3cm}\text{ by $\eqref{fxbdd}$ and $0 < f_x(w) \leq 0.63$ from  \lemsref{helping_unif}}\\
&\leq 1.7e^{-x} \bE[| \frakD_2 |  I(| \frakD_2 | >  \frak{C}_m)] + 0.63 e^{1- x} \bE[ |  \frakD_2|  I(| \frakD_2 | > \frak{C}_m) e^{W_b}] \\
&\leq C(m) e^{-x} \bE[\frakD_2^2 (1 + e^{W_b})],
\end{align*}
where the last line uses that $ I(| \frakD_2| >  \frak{C}_m) \leq  \frak{C}_m^{-1} |\frakD_2 |$.

\section{Proof of \lemsref{chernoff_lower_tail_bdd_U_stat} and \lemssref{non_integral_moment_bdd_u_stat}} \label{app:otherproofs}

\begin{proof}[Proof of \lemref{chernoff_lower_tail_bdd_U_stat}]
As a useful fact, we first note that, for any $p \in (1, 2]$,
\begin{equation} \label{lower_exp_bdd_crucial_ineq}
e^{-s} \leq 1 - s + s^p/p \text{ for } s\geq 0.
\end{equation}
This is because the derivative of $1 - s + s^p/p - e^{-s}$ as a function in $s$ has the form
\begin{equation} \label{1st_derivative}
\frac{\partial }{ \partial s} (1 - s + s^p/p - e^{-s}) = s^{p-1} + e^{-s} -1,
\end{equation}
which can be seen to be  non-negative for all $s \in [0, \infty)$. (This is obvious for $s \in (1, \infty)$ since $s^{p-1} > 1 > 1 - e^{-s}$ for $1 \leq s < \infty$; and it is also true for $s \in [0, 1)$ since $1 - e^{-s} \leq s \leq s^{p-1}$ for $0  \leq s \leq 1$.)

Using the trick by \citet[Section 5, Eqn. $(5.4)$]{hoeffding1963probability}, one can write
\[
U_n= \frac{1}{n!} \sum W(X_{i_1}, \dots, X_{i_n}),
\]
where the summation is over all $n!$ permutation of $(i_1, \dots, i_n)$ of $(1, \dots, n)$ and 
\[
W(x_1, \dots, x_n) = \frac{h(x_1, \dots, x_m) + h(x_{m+1}, \dots, x_{2m}) + \cdots + h(x_{km-m +1}, \dots, x_{km})}{\kappa},
\]
where $\kappa \equiv [ n/m]$, the greatest integer $\leq n/m$. By the Chernoff bounding technique and Jensen's inequality, for any $t >0$,
\begin{align*}
P(U_n \leq x) 
&\leq e^{tx} \bE[e^{-t U_n}]\\
&\leq e^{tx} \bE[e^{-t W(X_1, \dots, X_n)}] = e^{tx} (\bE[ e^{-t h(X_1, \dots, X_m)/\kappa}])^\kappa.
\end{align*}
Using that $h(X_1, \dots, X_m) \geq 0$ and \eqref{lower_exp_bdd_crucial_ineq},
 we can continue and get that
\begin{align*}
P(U_n \leq x)  
&\leq e^{tx} \left\{1 - \frac{t}{\kappa} \bE[h] + \frac{t^p}{p\kappa^p} \bE[h^p]\right\}^\kappa\\
&\leq \exp\left\{ t(x- \bE[h]) + \frac{t^p}{p\kappa^{p-1}} \bE[h^p]\right\},
\end{align*}
where the last inequality uses that $1 + y \leq e^y$ for all $y \in \bR$. By minimizing the right hand side with respect to $t$, one can take $t = \kappa \Big(\frac{\bE[h] - x}{\bE[h^p]}\Big)^{1/(p-1)}$ and obtain
\begin{align*}
P(U_n \leq x)  &\leq   \exp\left( \frac{- \kappa(\bE[h] - x)^{p/(p-1)}}{ (\bE[h^p])^{1/(p-1)}} + \frac{\kappa(\bE[h] - x)^{p/(p-1)}  }{p(\bE[h^p])^{1/(p-1)}} \right) \\
&= \exp\left( - \frac{ (p-1)\kappa (\bE[h] - x)^{p/(p-1)} }{p (\bE[h^p])^{1/(p-1)} }\right)
\end{align*}
\end{proof}

\begin{proof}[Proof of \lemref{non_integral_moment_bdd_u_stat}]
Define  the \emph{canonical} functions \citep[p.20-21]{korolyuk2013theory} 
\begin{align*}
g_1(x_1) &= h_1(x_1)\\
g_2(x_1, x_2) &=h_2(x_1, x_2) - g_1(x_1) - g_1(x_2)\\
& \vdots\\
g_m(x_1, \dots, x_m)&= h_m(x_1, \dots, x_m) - \sum_{l=1}^m g_1(x_l) - \sum_{1 \leq l_1 < l_2 \leq m} g_2(x_{l_1}, x_{l_2}) - \\
&\hspace{3cm}\dots - \sum_{1 \leq l_1 < \dots < l_{m-1} \leq m}g_{m-1}(x_{l_1}, \dots, x_{l_{m-1}}).
\end{align*}
Note that $r$ can be alternatively defined as the first integer such that, as functions, 
\[
g_k(x_1, \dots,x_k) = 0\text{ for } k = 1, \dots, r-1, \text{ and } \quad g_r(x_1, \dots, x_r) \neq 0;
\]
see the discussion in \citet[p.32]{korolyuk2013theory} for instance. 
 Then  \citet[Theorem 2.1.3 \& 2.1.4]{korolyuk2013theory} suggest that
\[
 \bE[|U_n|^p] \leq  
  \begin{cases} 
  (m- r +1)^{p-1} \sum_{k=r}^m {m \choose k}^p  {n \choose k}^{-p+1} \alpha_p^{k+1 } \bE[|g_k|^p] & \text{if } 1 \leq p \leq 2; \\
     (m- r +1)^{p-1}   \sum_{k=r}^m  {m \choose k}^p {n \choose k}^{-p+1} n^{ ( (p-2)k)/2} \gamma_p^{k+1}  \bE[|g_k|^p]& \text{if }  p \geq 2,
  \end{cases}
\]
where $\alpha_p \equiv \sup_x  (|x|^{-p} ( |1 +x|^p- 1 - px))\leq 2^{2-p}$ and $\gamma_p \equiv \{8(p-1)\max(1, 2^{p-3})\}^p$. The bound
\eqref{simple_u_stat_moment_bdd} is a simple consequence of this  based on \eqref{Jensen}.\end{proof}

\bibliographystyle{plainnat}

\bibliography{BE_nonunif}

\end{document}